\begin{document}

\newtheorem{theorem}{Theorem}[section]
\newtheorem{prop}[theorem]{Proposition}
\newtheorem{lemma}[theorem]{Lemma}
\newtheorem{cor}[theorem]{Corollary}
\newtheorem{definition}[theorem]{Definition}
\newtheorem{defn}[theorem]{Definition}
\newtheorem{conj}[theorem]{Conjecture}
\newtheorem{claim}[theorem]{Claim}
\newtheorem{defth}[theorem]{Definition-Theorem}
\newtheorem{obs}[theorem]{Observation}
\newtheorem{rmark}[theorem]{Remark}
\newtheorem{qn}[theorem]{Question}
\newtheorem{theo}[theorem]{Theorem}
\newtheorem{thmbis}{Theorem}
\newtheorem{dfn}[theorem]{Definition} 
\newtheorem{defi}[theorem]{Definition} 
\newtheorem{coro}[theorem]{Corollary}
\newtheorem{corbis}{Corollary}
\newtheorem{propbis}{Proposition} 
\newtheorem*{prop*}{Proposition} 
\newtheorem{lem}[theorem]{Lemma} 
\newtheorem{lembis}{Lemma} 
\newtheorem{claimbis}{Claim} 
\newtheorem{fact}[theorem]{Fact} 
\newtheorem{factbis}{Fact} 
\newtheorem{qst}[theorem]{Question} 
\newtheorem{qstbis}{Question} 
\newtheorem{pb}[theorem]{Problem} 
\newtheorem{pbbis}{Problem} 
 \newtheorem{question}[theorem]{Question}
\newtheorem{rem}[theorem]{Remark}
\newtheorem{remark}[theorem]{Remark}
\newtheorem{example}[theorem]{Example}
\newtheorem{eg}[theorem]{Example}
\newtheorem{notation}[theorem]{Notation}
\newenvironment{preuve}[1][Preuve]{\begin{proof}[#1]}{\end{proof}}

\newcommand{\hhat}{\widehat}
\newcommand{\boundary}{\partial}
\newcommand{\C}{{\mathbb C}}
\newcommand{\bD}{{\mathbb D}}
\newcommand{\bS}{{\mathbb S}}
\newcommand{\integers}{{\mathbb Z}}
\newcommand{\natls}{{\mathbb N}}
\newcommand{\bbN}{{\mathbb N}}
\newcommand{\bH}{{\mathbb H}}
\newcommand{\ratls}{{\mathbb Q}}
\newcommand{\reals}{{\mathbb R}}
\newcommand{\bbR}{{\mathbb R}}
\newcommand{\lhp}{{\mathbb L}}
\newcommand{\tube}{{\mathbb T}}
\newcommand{\cusp}{{\mathbb P}}
\newcommand\AAA{{\mathcal A}}
\newcommand\BB{{\mathcal B}}
\newcommand\CC{{\mathcal C}}
\newcommand\DD{{\mathcal D}}
\newcommand\EE{{\mathcal E}}
\newcommand\FF{{\mathcal F}}
\newcommand\GG{{\mathcal G}}
\newcommand\HH{{\mathcal H}}
\newcommand\II{{\mathcal I}}
\newcommand\JJ{{\mathcal J}}
\newcommand\KK{{\mathcal K}}
\newcommand\LL{{\mathcal L}}
\newcommand\MM{{\mathcal M}}
\newcommand\nN{{\mathcal N}}
\newcommand\OO{{\mathcal O}}
\newcommand\PP{{\mathcal P}}
\newcommand\QQ{{\mathcal Q}}
\newcommand\RR{{\mathcal R}}
\newcommand\SSS{{\mathcal S}}
\newcommand\TT{{\mathcal T}}
\newcommand\UU{{\mathcal U}}
\newcommand\VV{{\mathcal V}}
\newcommand\WW{{\mathcal W}}
\newcommand\XX{{\mathcal X}}
\newcommand\YY{{\mathcal Y}}
\newcommand\ZZ{{\mathcal Z}}
\newcommand\CH{{\CC\Hyp}}
\newcommand\Ga{{\Gamma}}
\newcommand\MF{{\MM\FF}}
\newcommand\PMF{{\PP\kern-2pt\MM\FF}}
\newcommand\ML{{\MM\LL}}
\newcommand\PML{{\PP\kern-2pt\MM\LL}}
\newcommand\GL{{\GG\LL}}
\newcommand\Pol{{\mathcal P}}
\newcommand\half{{\textstyle{\frac12}}}
\newcommand\Half{{\frac12}}
\newcommand\Mod{\operatorname{Mod}}
\newcommand\Area{\operatorname{Area}}
\newcommand\ep{\epsilon}
\newcommand\Hypat{\widehat}
\newcommand\Proj{{\mathbf P}}
\newcommand\U{{\mathbf U}}
 \newcommand\Hyp{{\mathbf H}}
\newcommand\D{{\mathbf D}}
\newcommand\Z{{\mathbb Z}}
\newcommand\R{{\mathbb R}}
\newcommand\Q{{\mathbb Q}}
\newcommand\E{{\mathbb E}}
\newcommand\til{\widetilde}
\newcommand\length{\operatorname{length}}
\newcommand\tr{\operatorname{tr}}
\newcommand\gesim{\succ}
\newcommand\lesim{\prec}
\newcommand\simle{\lesim}
\newcommand\simge{\gesim}
\newcommand{\simmult}{\asymp}
\newcommand{\simadd}{\mathrel{\overset{\text{\tiny $+$}}{\sim}}}
\newcommand{\ssm}{\setminus}
\newcommand{\pair}[1]{\langle #1\rangle}
\newcommand{\T}{{\mathbf T}}
\newcommand{\inj}{\operatorname{inj}}
\newcommand{\collar}{\operatorname{\mathbf{collar}}}
\newcommand{\bcollar}{\operatorname{\overline{\mathbf{collar}}}}
\newcommand{\I}{{\mathbf I}}
\newcommand{\eps}{\epsilon}

\newcommand{\bbar}{\overline}
\newcommand{\UML}{\operatorname{\UU\MM\LL}}
\newcommand{\EL}{\mathcal{EL}}
\newcommand\MT{{\mathbb T}}
\newcommand\Teich{{\mathcal T}}

\makeatletter
\@tfor\next:=abcdefghijklmnopqrstuvwxyzABCDEFGHIJKLMNOPQRSTUVWXYZ\do{%
  \def\command@factory#1{%
    \expandafter\def\csname cal#1\endcsname{\mathcal{#1}}
    \expandafter\def\csname frak#1\endcsname{\mathfrak{#1}}
    \expandafter\def\csname scr#1\endcsname{\mathscr{#1}}
    \expandafter\def\csname bb#1\endcsname{\mathbb{#1}}
    \expandafter\def\csname rm#1\endcsname{\mathrm{#1}}
  }
 \expandafter\command@factory\next
}
\makeatother

\newcommand*{\longhookrightarrow}{\ensuremath{\lhook\joinrel\relbar\joinrel\rightarrow}}
\newcommand {\tto}{ \to\rangle }
\newcommand {\onto} {\twoheadrightarrow}
\newcommand {\into} {\hookrightarrow}
\newcommand {\xra} {\xrightarrow}    
\newcommand{\ra}{\rightarrow}
\newcommand{\imp} {\Rightarrow}
\newcommand{\actedon}{\curvearrowleft} 
\newcommand{\actson}{\curvearrowright}

\newcommand {\sd} {\rtimes}   
\newcommand{\semidirect}{\ltimes}
\newcommand{\isemidirect}{\rtimes}
\newcommand{\tensor}{\otimes}
\newcommand{\wreath}{\Lbag}

\newcommand{\du}{\sqcup}
\newcommand{\Dunion}{\bigsqcup} 
\newcommand{\disjoint}{\sqcup}
\newcommand{\normal} {\vartriangleleft}

\newcommand {\ie}{ i.e.\  }

\newcommand{\ul}[1]{\underline{#1}} 
\newcommand{\ol}[1]{\overline{#1}}


\newcommand{\Cay}{\operatorname{Cay}}
\newcommand{\proj}{\operatorname{proj}}
\newcommand{\Fix} {\operatorname{Fix}}
\newcommand{\dist}{\operatorname{dist}}
\newcommand{\diam}{\mathop{\mathrm{diam}\;}}

\newcommand{\yam}{$\bullet$}
\newcommand{\modif}{$ \clubsuit$}
\newcommand{\coucou}[1]{\footnote{#1}\marginpar{$\leftarrow$}}
\newcommand{\needref}{\textsuperscript{{\it citation needed}}\marginpar{$\leftarrow$}}
\newcommand{\why}{\textsuperscript{{\it Why ?}}\marginpar{$\leftarrow$}}

\title{Horospheres in degenerate 3-manifolds}

\author[Cyril Lecuire]{Cyril Lecuire}

\address{
Universite Paul Sabatier,
118 Route de Narbonne, 31400 Toulouse, France}

\email{lecuire@math.univ-toulouse.fr}

\author[Mahan Mj]{Mahan Mj}

\address{School of Mathematics,
Tata Institute of Fundamental Research. 1, Homi Bhabha Road, Mumbai-400005, India}

\email{mahan@math.tifr.res.in}

\subjclass[2010]{30F40, 57M50}

\thanks{The research of the first author was partially supported by the ANR grant GDSOUS.
The research of the second author is partially supported by  a DST J C Bose Fellowship. }   

\date{\today}

 \begin{abstract}
 We study horospheres in hyperbolic 3-manifolds $M$ all whose ends are degenerate. Towards this, we study
 which almost minimizing
geodesics in $M$ go through
arbitrarily thin parts. 
\end{abstract}

\maketitle

\tableofcontents

\section{Introduction} The topological study of unipotent orbits in locally symmetric spaces
of finite volume  has a rich history  with some of the highlights
being work of Hedlund \cite{hedlund}, Margulis \cite{margulis-opp}
and Ratner \cite{ratner-top}. However, the study in quotients of symmetric spaces by discrete subgroups
of infinite covolume (c.f.\ \cite{sarnak-thin}) is in its infancy except in the special case of rank one
symmetric spaces, or more generally infinite volume manifolds of pinched negative curvature, where 
the initial work was done by Eberlein \cite{eberlein1, eberlein2}. Following up from work of Eberlein,
a lot of work was done in dimension two, where one is interested in particular in
the behavior of the horocycle flow
on negatively curved surfaces of infinite genus (c.f.\ \cite{ds, haas, sarig}). In this paper we initiate the detailed
study of unipotent orbits (or horospheres) in hyperbolic 3-manifolds of infinite volume. We are particularly 
interested in degenerate hyperbolic 3-manifolds, i.e.\ hyperbolic 3-manifolds all whose ends are degenerate. Equivalently a degenerate hyperbolic 3-manifold is a quotient $M=\bH^3/G$ where $G$ is a finitely generated discrete subgroup of ${\rm PSL}_2(\C)$ which is not a lattice and the limit set $\Lambda$ of $G$ is the whole sphere $\bS^2$. We will assume that our manifolds have no parabolics. We are following here the terminology
introduced by Thurston in \cite{thurstonnotes} (as opposed to groups occurring in the Maskit slice, which
have been called by similar names in the literature).

The following characterization is due to Eberlein \cite{eberlein2}, Ledrappier \cite{ledrappier} and Coud\`ene-Maucourant \cite{cm}, (we will explain more precisely who proved what in sections 2.1 and 2.2)

\begin{theorem}\label{prel} (see Theorem \ref{omni})
 Let $M$ be a degenerate hyperbolic 3-manifold and let $\gamma (:= \gamma (t))$ be a geodesic ray in $M$
parametrized by arc length (and hence oriented). Let $W^{ss}(\gamma (t))$ be the strong stable manifold (i.e.\ stable horosphere)
through $\dot\gamma(t)$.
 Then 
\begin{enumerate}
\item $W^{ss}(\gamma (t))$ is dense in $M$ if and only if $\gamma$ is {\bf not} almost minimizing.
\item $W^{ss}(\gamma (t))$ is recurrent but not dense in $M$
 if and only if $\gamma$ is thin and almost minimizing.
\item $W^{ss}(\gamma (t))$ is properly embedded in $M$
 if and only if $\gamma$ is thick and almost minimizing.
\end{enumerate}
\end{theorem}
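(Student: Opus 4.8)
The plan is to assemble the three dichotomies from the results of Eberlein, Ledrappier and Coud\`ene--Maucourant, the unifying device being the dictionary between the asymptotics of $\gamma$ in $M=\bH^3/G$ and the position of the orbit $G\cdot o$ --- for a basepoint $o$ above $\gamma(0)$ --- relative to the horoballs based at $\xi:=\widetilde\gamma(+\infty)\in\bS^2$, where $\widetilde\gamma$ is the lift of $\gamma$ through $o$. First I would record the comparison lemma that $\gamma$ is almost minimizing if and only if $\xi$ is not a horospherical limit point of $G$. Normalise the Busemann cocycle $b_\xi$ so that $b_\xi(o)=0$; then $b_\xi(\widetilde\gamma(t))=-t$, while $d_M(\gamma(0),\gamma(t))=\min_{g\in G}d(go,\widetilde\gamma(t))$. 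Since $b_\xi$ is $1$-Lipschitz, $d(go,\widetilde\gamma(t))\ge b_\xi(go)+t$, so if $\inf_g b_\xi(go)=-s_0>-\infty$, i.e.\ if $\xi\notin\Lambda_{\mathrm{hor}}$, then $d_M(\gamma(0),\gamma(t))\ge t-s_0$ for all $t$ and $\gamma$ is almost minimizing. Conversely, if $\xi\in\Lambda_{\mathrm{hor}}$, choose $g$ with $b_\xi(go)=-s$ for $s$ arbitrarily large and slide $go$ towards $\xi$ along the geodesic ray from $go$ to $\xi$; this ray asymptotically tracks $\widetilde\gamma$, so $d(go,\widetilde\gamma(t))\le t-s+1$ for suitable arbitrarily large $t$, and $\gamma$ is not almost minimizing. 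In particular this identifies the alternative ``$\gamma$ not almost minimizing'' with ``$\xi\in\Lambda_{\mathrm{hor}}$''.

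For part (1) I would then invoke the density criterion: $W^{ss}(\dot\gamma(t))$ is dense in $T^1M$ precisely when $\xi\in\Lambda_{\mathrm{hor}}$. This is where degeneracy enters essentially. Since $M$ is degenerate, $\Lambda=\bS^2$, so the $G$-action on $\boundary\bH^3$ is minimal; this minimality is what drives Eberlein's and Coud\`ene--Maucourant's density argument in the absence of the hypotheses (divergence type, finiteness of a Bowen--Margulis measure, geometric finiteness) available in other settings. Conversely, when $\xi\notin\Lambda_{\mathrm{hor}}$ a horoball at $\xi$ avoiding $Go$ produces an open subset of $T^1M$ missed by $W^{ss}(\dot\gamma(t))$, so the horosphere is not dense. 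Combined with the comparison lemma, this is statement (1).

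It remains to split the non-dense case $\xi\notin\Lambda_{\mathrm{hor}}$ (equivalently $\gamma$ almost minimizing) according to whether $\gamma$ eventually stays thick. If $\gamma$ is thick, fix $\eps>0$ with $\inj\ge\eps$ along the tail of $\gamma$; following Ledrappier and Coud\`ene--Maucourant, this uniform thickness forces the translates $\{g\widetilde W:g\in G\}$ of the horosphere $\widetilde W$ based at $\xi$ through $o$ to accumulate on no compact set --- the horosphere runs out a thick end with no room to wrap around short curves --- so $W^{ss}(\dot\gamma(t))$ is properly embedded. If instead $\gamma$ is thin, then along some $t_n\to\infty$ it enters the Margulis tube of a geodesic $\delta_n$ with $\ell(\delta_n)\to0$; the portion of the horosphere inside that tube wraps around $\delta_n$, which forces it to return near itself, so $W^{ss}(\dot\gamma(t))$ is recurrent, while by (1) it is not dense. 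Exhaustiveness --- that over an almost minimizing ray the horosphere is either properly embedded or recurrent, nothing in between --- reduces to the statement that the translates $\{g\widetilde W\}$ accumulate on a compact set if and only if $\gamma$ makes excursions into arbitrarily thin parts, the two directions being the wrapping phenomenon and the uniform-thickness estimate respectively.

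The main obstacle is this third step: controlling a $2$-dimensional horosphere globally as it traverses deeper and deeper Margulis tubes --- quantifying precisely the wrapping that yields recurrence in the thin case and, dually, showing that uniform thickness along the escaping ray genuinely precludes accumulation of the translated horospheres. The density statement of the second step, though classical in spirit, also requires care, since one must run the argument from minimality of the boundary action alone. Granting these two points, (1)--(3) follow by combining them with the comparison lemma as above.
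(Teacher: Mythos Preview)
Your proposal is correct and follows the same route as the paper. The paper's own argument for Theorem~\ref{omni} is literally a one-line assembly: it quotes Proposition~\ref{ameqnt} (the comparison between almost minimizing rays and the complement of the horospherical limit set), Eberlein's density criterion (Theorem~\ref{am}), Ledrappier's properness result for thick rays (Theorem~\ref{am-injrad1}), and Coud\`ene--Maucourant's recurrence result for thin rays (Theorem~\ref{am-injrad2}), and observes that the three equivalences follow. You have identified exactly these ingredients and moreover sketched why each one holds; your ``main obstacle'' is precisely the content of the two cited black boxes, which the paper does not reprove either.
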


A word about the terminology. 
We shall call a geodesic ray {\bf exiting} if it is properly embedded, i.e.\ it is not contained in a compact set.
A geodesic in $M$ is {\bf almost minimizing} if $|d_M(\gamma(0), \gamma(t)) - t|$
is uniformly bounded. Clearly, almost minimizing geodesics are exiting. 
Almost minimizing  geodesics were studied by Haas \cite{haas} in the context of `flute surfaces', certain
planar hyperbolic
surfaces with infinitely many cusps. A version of Theorem \ref{prel} was deduced by Dal'bo and Starkov
\cite{ds} in the context of infinitely geneated Schottky groups. Here we shall study 
almost minimizing  geodesics  in 3-manifolds.
We will define recurrent horospheres in \textsection \ref{nondense}, for now, let us just say that they are not properly embedded. A geodesic ray is {\bf thin} if it goes through 
points in $M$ of arbitrarily small injectivity radius, and is {\bf thick} otherwise. 
Given the conclusions of Theorem \ref{prel} above, studying horospheres in degenerate hyperbolic
3-manifolds boils down to studying the following question:

\begin{pb}\label{maintt1}
Describe all almost minimizing
geodesics. 
\end{pb}

Problem \ref{maintt1} has quite a satisfactory solution in terms of a necessary and
sufficient condition. The ideal point of a lift of an almost minimizing geodesic ray lies in the complement
 $\Lambda^c_H (:= \Lambda \setminus \Lambda_H)$  of the horospherical limit set $\Lambda_H$. Another characterization can be made in term of the number of preimages under a Cannon-Thurston map. Given $M=\bH/G$, let $i : \Gamma_G \to \bH^3$ 
be the map that naturally comes from identifying the vertices of a Cayley graph of $G$
with the orbit of a point in $\bH^3$. Since we have assumed that $G$ has no parabolics, $\Gamma_G$ is hyperbolic. A {\em Cannon-Thurston map} is  the restriction to the ideal boundary $\partial\hat i:\partial\hat\Gamma_G\to\bS^2$ of a continuous extension $\hat i:\hat\Gamma_G\to\bD^3$ of $i$. The existence and structure of such a map has been studied in \cite{mahan-split, mahan-kl}.   Let $\Lambda_m$ denote the multiple 
limit set, i.e.\ the collection of
points in the limit set $\Lambda$ that have more than one pre-image under the Cannon-Thurston map. The conclusion of Section \ref{hll} can be summarized as follows:

\begin{theorem} \label{hllmain}
$\Lambda_m=\Lambda_H^c$ is the set of ideal points of lifts of almost minimizing geodesic rays. 
\end{theorem}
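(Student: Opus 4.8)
The plan is to prove the two equalities of Theorem~\ref{hllmain} separately. The equality $\Lambda_H^c=\{\text{ideal points of lifts of almost minimizing geodesic rays}\}$ is essentially a restatement of Theorem~\ref{prel}. Every $\xi\in\bS^2=\Lambda$ is the ideal point of the lift through the basepoint $o$ of a geodesic ray $\gamma_\xi$ of $M$, and whether a ray is almost minimizing is invariant under replacing it by a forward-asymptotic ray (two rays whose lifts share an ideal point are forward asymptotic, and the bound on $|d_M(\gamma(0),\gamma(t))-t|$ is preserved), so ``$\xi$ is the ideal point of a lift of an almost minimizing ray'' is equivalent to ``$\gamma_\xi$ is almost minimizing''. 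By the dictionary relating the horospherical limit set to density of horospheres (part of Theorem~\ref{omni}, going back to Eberlein), $\xi\in\Lambda_H$ iff the strong stable horosphere through the lift of $\gamma_\xi$ is dense in $M$, which by Theorem~\ref{prel}(1) holds iff $\gamma_\xi$ is not almost minimizing. Hence $\Lambda_H^c$ is precisely the asserted set, and it remains to prove $\Lambda_m=\Lambda_H^c$, equivalently $\Lambda_m=\{\text{ideal points of lifts of almost minimizing rays}\}$.

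For this I would reduce to the geometry of a single degenerate end. By tameness $M$ is the union of a compact core with finitely many degenerate ends $E_1,\dots,E_n$, and $E_k$ carries an ending lamination $\lambda_k$ on its bounding surface $S_k$. Two facts from \cite{mahan-split,mahan-kl} are used: (i) the split-geometry model of $E_k$, which presents $E_k$ coarsely as a stack of split level surfaces exiting the end, with the geometry coarsely monotone in the depth coordinate along the stack; and (ii) the description of the fibres of the Cannon--Thurston map, namely that for $x\neq y$ in $\partial\Gamma_G$ one has $\partial\hat i(x)=\partial\hat i(y)$ exactly when $x$ and $y$ are joined by a finite chain of leaves, or of diagonals of complementary ideal polygons, of $G$-translates of the lifts $\tilde\lambda_k$. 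Thus $\Lambda_m$ is precisely the $\partial\hat i$-image of the endpoints of leaves of the $\tilde\lambda_k$ (together with the ideal vertices of complementary polygons), and it suffices to prove that a geodesic ray of $M$ is almost minimizing if and only if its ideal point is such an image.

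Both directions I would argue in the split-geometry model. Suppose first $\xi=\partial\hat i(x)=\partial\hat i(y)$ with $\{x,y\}$ the endpoints of a $G$-translate of a leaf $L$ of some $\tilde\lambda_k$ (the chain and polygon cases reduce to this). Since $L$ is a leaf of the ending lamination, its lift in $\bH^3$ is not a quasigeodesic and approaches $\xi$ horocyclically; concretely, the part of $E_k$ that limits to $\xi$ stays within a bounded family of horospheres centred at $\xi$, so some horoball at $\xi$ meets the orbit $Go$ in a bounded set and $\xi\in\Lambda_H^c$. Equivalently, the geodesic ray $\tilde\gamma$ to $\xi$ runs transverse to these horospheres, hence transverse to the level surfaces of $E_k$, and by coarse monotonicity in depth $d_M(\gamma(0),\gamma(t))=d_{\bH^3}(Go,\tilde\gamma(t))\simadd t$, so $\gamma$ is almost minimizing. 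Conversely, let $\gamma$ be almost minimizing with lift $\tilde\gamma$ and ideal point $\xi$. Since $\gamma$ is exiting it eventually lies in a single end $E_k$ and crosses its split level surfaces, and the almost minimizing hypothesis forces this crossing to be coarsely monotone in depth with bounded horizontal excursion in each block (any excess horizontal travel, in particular any wrapping inside a Margulis tube, contradicts $|d_M(\gamma(0),\gamma(t))-t|$ being bounded). Hence the horizontal direction of $\gamma$ on the level surfaces, which the geometry of $E_k$ drives towards $\lambda_k$, converges to a single leaf $L$ of $\lambda_k$ rather than being smeared across it; unwinding the model identifications, the two endpoints of the lift of $L$ in $\partial\Gamma_G$ are distinct and are both sent to $\xi$ by $\partial\hat i$, so $\xi\in\Lambda_m$.

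The main obstacle is making the two model-geometry arguments of the previous paragraph precise, the harder being the second: one must show that ``exiting efficiently'' genuinely forces $\gamma$ to converge to a single leaf, rather than accumulating on a positive-length sublamination of $\lambda_k$, and one must control how $\tilde\gamma$ threads the thin part of $M$ so that the bounded-backtracking property really holds. Ends of $M$ that are not surface-group ends (compressible ends, or ends of rank one) are treated in the same way, with ``leaf of the ending lamination'' replaced by the relevant geodesic lamination supplied by the split-geometry model of that end.
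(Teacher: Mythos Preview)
Your reduction to proving $\Lambda_m = \{\text{ideal points of lifts of almost minimizing rays}\}$ is correct and matches the paper (this is Proposition~\ref{multi-almost-minimizing}, combined with Proposition~\ref{ameqnt}). However, the arguments you propose for the two inclusions are genuinely different from the paper's, and the gaps you flag are real.

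For $\Lambda_m \subset \Lambda_H^c$, the paper does not argue via horocyclical behaviour of leaves or split-geometry levels. Given $z \in \Lambda_m$ with $\hat{i}(a) = \hat{i}(b) = z$ for a leaf $(a,b)$ of $\mathcal{L}_E$, it constructs an explicit \emph{minimizing} ray landing at $z$: take closed curves $\alpha_n$ on $S$ whose lifts converge to $(a,b)$, let $\sigma_n$ be their geodesic realizations (which exit $E$), and pass to a limit of shortest segments from $\partial\tilde{E}$ to points $\tilde{x}_n \in \tilde{\sigma}_n$. The work is in showing the feet $\tilde{z}_n$ of these perpendiculars either stay bounded or escape only towards $z$; this is done by a direct argument with the attracting/repelling fixed points of the translating elements $g_n$, not by any model-geometry monotonicity. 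Your horosphere sketch would need an independent justification of why the orbit misses some horoball at $\xi$.

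For $\Lambda_H^c \subset \Lambda_m$, the paper again avoids split geometry. Given an almost minimizing ray $r$, one picks bounded-length essential loops $\sigma_t$ near $r(t)$, lets $\alpha_t$ be the free homotopy class on $S$, and forms a ``trapezoid'' in $\tilde{E}$ with sides $\bar{\alpha}_t$, $\bar{\sigma}_t$ and two lifts $r_{t1}, r_{t2}$ of a segment joining $\alpha_t$ to $\sigma_t$. The concatenation $r_{t1} \cup \bar{\sigma}_t \cup r_{t2}^{-1}$ is a \emph{uniform} quasigeodesic (this is the key lemma, quoted from \cite{mitra-endlam, mahan-elct, mahan-kl}), so the terminal point $w_t$ of $\bar{\alpha}_t$ and the point $r(\infty)$ converge to the same boundary point. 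Lemma~\ref{endpts} (from Thurston's theory of ending laminations) then says any subsequential limit of $w_t$ in $\partial\tilde{S}$ is a leaf endpoint of $\mathcal{L}_E$, whence $r(\infty) \in \Lambda_m$.

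Your proposed route---bounded horizontal excursion in the split model forcing convergence of the ``horizontal direction'' to a single leaf---is the natural geometric picture, but the step you identify as the obstacle (ruling out accumulation on a proper sublamination) is exactly what the paper's trapezoid argument circumvents. The paper's method is more elementary in that it needs only the existence and point-preimage structure of the Cannon--Thurston map plus one quasigeodesic estimate, rather than the full split-geometry machinery; your approach, if completed, would presumably give more explicit control on where the almost minimizing ray sits inside the model, but that control is not needed for the statement at hand.
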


Theorem \ref{hllmain} answers an issue that has come up in works of several authors \cite{kapovich-ahlfors, gerasimov, jklo} who tried to relate the injective points of the Cannon-Thurston map to the {\it conical}
limit set. They concluded that the conical limit set is strictly contained in the set of 
injective points of the Cannon-Thurston map. Theorem \ref{hllmain} thus shows that,
in characterizing the injective points of the Cannon-Thurston map,
 the right limit set to be looking
at is the {\it horospherical} rather than the conical limit set.

Then we are led to the following:

\begin{pb}\label{maintt2}
Give conditions to determine which almost minimizing
geodesics in a degenerate end of a hyperbolic 3-manifold are thick and which are thin. 
\end{pb}

At this juncture, a kind of Murphy's Law breaks loose:

\begin{quote} {\it Anything that can go wrong does go wrong.} \end{quote}

It seems difficult to solve Problem  \ref{maintt2} comprehensively
and we find a number of examples and counterexamples. 
At the end of Section \ref{dense-non-dense} we give two examples:
one in which all almost minimizing geodesics are thick and one in which all 
almost minimizing geodesics are thin. What these examples bring out is the importance of
`building blocks' in trying to solve Problem  \ref{maintt2}. 
Thus from Section
\ref{models} onwards, we attempt  to address Problem \ref{maintt2} in terms of  
  the model geometry of ends
\cite{minsky-elc1, minsky-elc2, mahan-ibdd, mahan-split}. 

A number of model geometries for degenerate ends of hyperbolic 3-manifolds have come up, based primarily
on Minsky's monumental work \cite{minsky-jams, minsky-torus, minsky-elc1}  culminating in the resolution of
the Ending Lamination Conjecture by Brock-Canary-Minsky in \cite{minsky-elc2}, and in
the second author's proof of the existence of Cannon-Thurston maps \cite{mahan-split, mahan-kl}. 
In increasing
order of complexity, these are:

\begin{enumerate}
\item Bounded geometry \cite{minsky-jams, mitra-trees, mahan-bddgeo}
\item $i-$bounded geometry \cite{minsky-torus, ctm-locconn, mahan-ibdd}
\item Amalgamation geometry \cite{mahan-amalgeo}
\item Split geometry \cite{mahan-split}
\end{enumerate}

Of these, the first three are special
and are the subject of study in Section \ref{models},
 while every degenerate end $E$ does admit a model of split geometry.
The classification of these geometries depends on geometries of `building blocks', i.e.\
geometries of copies
of (topological) product regions $S \times I$ that are glued end-to-end to build up $E$.
Our explorations lead to the following conclusions in the special cases
of Section \ref{models}. As the reader 
will note, the conclusions become weaker and weaker as complexity increases.

\begin{theorem}\label{omni2} (See Lemma \ref{bddgeo}, Proposition \ref{ibddgeo} and Lemma \ref{amalgeo-suff}.)
\begin{enumerate} 
\item Let $E$ be of bounded geometry. Then every exiting geodesic is thick. In particular
every almost minimizing geodesic is thick. 
\item  Let $E$ be of i-bounded geometry. Then there exist thin exiting geodesics. However,
every almost minimizing 
geodesic is thick. 
\item 
Any almost minimizing geodesic in an amalgamated geometry end is thick if  all amalgamated
blocks have bounded thickness. 
\end{enumerate}
\end{theorem}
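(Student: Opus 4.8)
\emph{Part (1)} is soft. By definition (or by Minsky's a priori bounds \cite{minsky-jams}), an end $E$ of bounded geometry has a uniform lower bound $\eta>0$ for $\inj$ over all of $E$. A geodesic ray $\gamma$ going out the end $E$ eventually enters $E$ and stays there, so $\inj(\gamma(t))\ge\eta$ for $t$ large; on the remaining initial segment, whose image is compact, $\inj$ is bounded below by a positive constant as well. Hence $\inf_t\inj(\gamma(t))>0$ and $\gamma$ is thick. Almost minimality plays no role.

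\emph{Part (2)(a).} I would exhibit a thin exiting geodesic directly. In i-bounded geometry the short curves arise from arbitrarily large Dehn twisting, and the corresponding Margulis tubes $\mathbf{T}_n=\mathbf{T}(\alpha_n)$, with $\ell_n=\length(\alpha_n^*)\to 0$, are deep tubes: at distance $\rho$ from the core one has $\inj\lesssim\ell_n\cosh\rho$ (the rotational part of the holonomy being small), so at any fixed distance from the core $\inj\to 0$ as $n\to\infty$. Pick a proper ray $c$ in $E$ whose lift to $\bH^3$ is a quasigeodesic and which passes within a fixed distance $R_0$ of each $\alpha_n^*$ (possible since the $\mathbf{T}_n$ escape $E$: route $c$ efficiently from one tube to the next). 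If $\xi\in\bS^2$ is the ideal endpoint of a lift $\tilde c$ and $\gamma$ is the projection of the geodesic ray $[\tilde c(0),\xi)$, then by the Morse lemma $\gamma$ lies at bounded (synchronous) Hausdorff distance $R_1$ from $c$, so $\gamma$ is exiting. Then $\gamma$ comes within $R_0+R_1$ of each $\alpha_n^*$, hence passes through points with $\inj\lesssim\ell_n\cosh(R_0+R_1)\to 0$; so $\gamma$ is thin.

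\emph{The tube estimate} underlies both (2)(b) and (3). Suppose $\gamma$ is $C$-almost minimizing and, on a time interval $[a,b]$, it enters a Margulis tube $\mathbf{T}(\alpha)$ of radius $r$ at $\gamma(a)\in\boundary\mathbf{T}(\alpha)$, leaves at $\gamma(b)\in\boundary\mathbf{T}(\alpha)$, and reaches distance $\rho$ from the core in between; then $b-a\ge 2(r-\rho)$, while almost minimality gives $d_M(\gamma(a),\gamma(b))\ge b-a-2C$, so $2(r-\rho)-2C\le d_M(\gamma(a),\gamma(b))$. Hence, \emph{if $d_M(\gamma(a),\gamma(b))\le D$ with $D$ independent of how deep the tube is}, then $\rho\ge r-C-\tfrac12 D$ and $\inj\ge\tfrac12\ell_\alpha\cosh\rho\gtrsim\epsilon_0\,e^{-(C+D/2)}$ throughout the excursion --- a uniform positive lower bound, so $\gamma$ is thick. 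In \emph{Part (3)} the constant $D$ is handed to us by the hypothesis: bounded thickness means a block with its tube removed has diameter $\le\tau$. One checks (again from bounded thickness of all blocks) that an almost minimizing ray crosses blocks essentially monotonically, with total backtracking $\lesssim C$, so it enters and leaves each amalgamated block through its two coupling annuli, which are joined by a path of length $\le\tau$ in the block avoiding the tube. Thus the estimate applies with $D=\tau$ for all but boundedly many tube-excursions, and along the remaining bounded portion of $\gamma$ the injectivity radius is trivially bounded below; this proves (3).

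\emph{Part (2)(b)} is the subtle case, since i-bounded geometry gives no diameter bound near a tube --- the flat torus $\boundary\mathbf{T}(\alpha)$ has area $\asymp 1/\ell_\alpha$ --- so $D$ must come from the finer structure of Minsky's model for i-bounded ends \cite{minsky-torus, mahan-ibdd}. The crucial input is that the sole mechanism producing a short curve is twisting, and the combinatorial twisting coefficient $d_\alpha$ is uniformly comparable to the tube holonomy: $d_\alpha\ell_\alpha\asymp 1$. Consequently ``$d_\alpha$ times around $\mathbf{T}(\alpha)$'' is a uniformly bounded isometry, so the before/after sides of the $\alpha$-twist region lie at uniformly bounded distance $D$ in $M$; an almost minimizing geodesic crossing the twist region enters on one side and leaves on the other (up to backtracking controlled by almost minimality), so the tube estimate applies with this $D$ and $\gamma$ is thick. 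The main obstacle is precisely to establish $d_\alpha\ell_\alpha\asymp 1$ uniformly in the i-bounded model and to verify that a tube-excursion of an almost minimizing geodesic is genuinely the crossing of a single twist region --- this is the step where one must really invoke the model geometry rather than a soft argument; the rest of (2)(b) is then the elementary estimate above. (Consistently, the statement fails in split geometry, where $d_\alpha\ell_\alpha$ can be unbounded, which is exactly what permits an almost minimizing geodesic to dive arbitrarily deep.)
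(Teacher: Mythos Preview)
Your Part~(1) is fine and matches the paper.

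Part~(2)(b) rests on a misreading of the definition. You write that ``$i$-bounded geometry gives no diameter bound near a tube --- the flat torus $\partial\mathbf T(\alpha)$ has area $\asymp 1/\ell_\alpha$'', and you then look for a bound $D$ via the relation $d_\alpha\ell_\alpha\asymp 1$. But the paper's \emph{definition} of $i$-bounded geometry is precisely that the boundary torus of every Margulis tube has uniformly bounded diameter; equivalently (Proposition~\ref{ibdd-model}), the model is built from thin blocks in which the tube boundary sits inside a product $S\times[0,3]$ of bounded diameter. With this in hand, your own tube estimate finishes immediately: $d_M(\gamma(a),\gamma(b))\le\operatorname{diam}(\partial T)\le D$. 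The paper's argument is even simpler and never isolates a tube excursion: the split surfaces $S\times\{i\}$ are $L$-bilipschitz to a fixed $S$ (diameter $\le D$), and there is a thick product path of length $\le 3L$ between consecutive ones avoiding the tubes; hence any $p_i\in S\times\{i\}$, $p_{i+1}\in S\times\{i+1\}$ are joined by a path of length $\le L_0=2D+3L$, so almost minimality forces $l(\lambda\cap B_i)\le L_0+C$, which bounds the depth of any tube excursion. No twist-coefficient analysis is needed.

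Part~(2)(a) has a gap. A path that goes from $\partial\mathbf T_n$ (at distance $r_n\to\infty$ from the core) down to distance $R_0$ of the core and back out has length $\ge 2(r_n-R_0)$ but joins two points on $\partial\mathbf T_n$, which in $i$-bounded geometry are at \emph{bounded} distance; so merely ``passing within $R_0$ of each core and routing efficiently'' is not a quasigeodesic. The paper repairs this by winding $m_i$ times around the core with $l(\mu_i)\to\infty$; the winding is what makes the concatenation a uniform quasigeodesic (compare the proof of Proposition~\ref{hnotc}).

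In Part~(3) you write ``bounded thickness means a block with its tube removed has diameter $\le\tau$'', but that is not the hypothesis: thickness is the distance between $S\times\{i\}$ and $S\times\{i+1\}$, while the amalgamation components (block minus tube) may have arbitrarily large diameter. What \emph{is} bounded is the diameter of each boundary surface $S\times\{i\}$ (part of the amalgamation geometry axioms) together with the block thickness (the hypothesis). The paper combines these exactly as in (2)(b): any $p_i\in S\times\{i\}$, $p_{i+1}\in S\times\{i+1\}$ are joined by a path of length $\le D_0+2L$, so the $\lambda$-segment between them has length $\le D_0+2L+C$ and cannot penetrate deep into the tube. Your tube estimate would also work once fed the correct $D$, but the ``coupling annuli'' picture and the claimed diameter bound on the complement of the tube are not what the hypothesis provides.
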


Here `thickness' (roughly) refers to the shortest distance between the bottom and top surfaces
(i.e.\ $S \times \{ 0 \}$ and $S \times \{ 1 \}$). 
We note here  that the  sufficient condition of `bounded thickness'
in Item (3) of Theorem \ref{omni2} is quite strong.

To proceed further (and deal with the general case of split geometry)
 a fair bit of technical material from \cite{masur-minsky2, minsky-elc1, mahan-split} is necessary. So as not to interrupt the flow of the paper,
 we proceed assuming this and relegate a summary of the relevant material to an Appendix, Section \ref{splitt}.

Section \ref{ctreg} dwells on counterexamples.
Using  the technology of  Section \ref{splitt}, we find that the
 condition of `bounded thickness'
in Item (3) of Theorem \ref{omni2}
is not a necessary condition even in the special case of amalgamated
geometry ends. 
The example of Section \ref{unbddth} shows that
it is possible to have thick almost minimizing geodesics in manifolds of amalgamation
geometry even in the presence of arbitrarily thick
amalgamation blocks.
Further, in the general case of split geometry, the sufficient condition of `bounded thickness'
is neither necessary nor sufficient. 
In Section \ref{ctreg-split} we  provide a counterexample to show that there does
exist an end of split geometry, where all the building blocks (or `split blocks') have bounded
thickness
but almost minimizing geodesics are thin.

The examples of Section \ref{ctreg}
 seem to justify the `Murphy's Law' that we mentioned above: as we progress to greater
degrees of complexity of the geometry of ends, we tend to lose any hope of systematically
 characterizing which almost minimizing geodesics are thick and which are thin, i.e.\ we are
unable to provide a satisfactory answer to Question
\ref{maintt2}   in the 
most general case (of split geometry). More precisely, the counterexamples in Section \ref{ctreg}
show that the natural property of {\bf thickness} of blocks fails to detect thickness or thinness
of almost minimizing geodesics.

In hindsight, the difficulty in answering Question
\ref{maintt2} manifests in the difference in the approaches of McMullen \cite{ctm-locconn} and the second author
\cite{mahan-split} in proving the existence of Cannon-Thurston maps, i.e.\ $\pi_1(S)-$equivariant 
continuous maps from the
(hyperbolic or relatively hyperbolic) boundary of $\pi_1(S)$ onto the limit set. McMullen finds, in the special case of
a punctured torus Kleinian group,  precise locations (in $\til{E}$, the universal cover of an end) of geodesics parametrized
by the boundary at infinity $S^1_\infty$ (of $\pi_1({S})$). On the other hand, the approach 
to the general case of split geometry in \cite{mahan-split} necessarily forgets
much of the fine structure contained within the building (split) blocks by `electrocuting' their connected components. Punctured torus
groups provide an example of $i-$bounded geometry \cite{mahan-ibdd}, which is quite special. As such an analog
of McMullen's approach in the general case of split geometry is missing. The counterexamples in Section \ref{ctreg}
indicate that even answering Question \ref{maintt2}, which is a small component of the more general problem of finding
precise locations of geodesics in $\til{E}$, is tricky in general. 

\section{Dense and non-dense horospheres}	\label{dense-non-dense}
We start with a few definitions. Let $M$ be a complete hyperbolic $3$-manifold, $\til{M}=\bH^3$ its universal cover
and $\partial \til{M}$ the boundary at infinity of $\til M$. We will use the unit ball model and then $\partial\bH^n$ is the unit sphere $\bS^{n-1}$. We identify $\pi_1(M)$ with the group of deck-transformations on $\til M$. \\

Let $SM$ be the unit tangent bundle of $M$ and $S\til M$ the unit tangent bundle of $\tilde M$. Denote by $g:\reals\times S\tilde M\to S\tilde M$ the geodesic flow. The {\bf strong stable manifold} or {\bf stable horosphere} through $\tilde v\in S\tilde M$ is the set $W^{ss}(\tilde v) = \{\tilde w\in S\tilde M: d(g(t,\tilde v),g(t, \tilde w)) \longrightarrow 0 \, {\rm when} \, t\longrightarrow 0\}$. The stable horosphere  $W^{ss}(v)\subset SM$ through a vector $v\in SM$ is the projection of the stable horosphere through a lift of $v$ to $\tilde v$.

In the hyperbolic space $\bH^n$ of dimension $n$, a horosphere ${\mathcal H}$ is the intersection of $\bH^n$ with a round sphere tangent to the boundary at infinity (in the unit ball model of $\bH^n$), i.e.\ the round sphere minus the point of tangency. A stable horosphere is the set of unit normal vectors to a horosphere ${\mathcal H}$ pointing into the round ball bounded by ${\mathcal H}$.

As stated in the Introduction, we call a hyperbolic 3-manifolds $M$ degenerate if all its ends are degenerate.
We will define the ends of 3-manifold in section \ref{ends}, for now, let us give an alternate definition. For our
purposes, a hyperbolic 3-manifold is a quotient $M=\bH^3/G$ where $G$ is a finitely generated discrete subgroup of ${\rm PSL}_2(\C)$. The manifold $M$ is {\bf degenerate} if $G$ is not a lattice and its limit set $\Lambda$ is the whole sphere $\bS^2$.

\medskip

\noindent {\bf Convention:} Unless otherwise mentioned,
all degenerate manifolds in this paper
will be without parabolics. $M$ will denote a degenerate hyperbolic 3-manifold and $G$ the corresponding
Kleinian group.

\subsection{Almost minimizing geodesics and dense horospheres} 

In this section we will expose results of Eberlein relating dense horospheres with almost minimizing geodesic rays. We start with some definitions and properties of almost minimizing geodesic rays.
\begin{defn}
Given $C\geq 0$, a geodesic ray $\gamma = \gamma(t): t \in \reals^+$ in $M$  is called {\bf 
$C$-almost minimizing} if it is has unit speed and
  $ d_M(\gamma(0),\gamma(t)) \geq t- C$ for any $t\in\reals^+$. A geodesic ray is {\bf almost minimizing} if it is $C$-almost minimizing for some $C\geq 0$.

A geodesic ray $\gamma = \gamma(t): t \in \reals^+$,  is called {\bf asymptotically
almost minimizing} if it has unit speed  and if for any  $\delta > 0$ there exists $T$
such that for any $s, t \in \reals^+$ with $ s, t \geq T$, $ d_M(\gamma(s),\gamma(t)) \geq |s-t|-\delta$.

A point $\xi \in \partial \til{M}$ is a horospherical limit point of $\pi_1(M)$, if for any base-point $o\in \til{M}$
and any horoball $B_\xi$ based at $\xi$, there exist infinitely many translates $g.o \in B_\xi$,
where $g \in \pi_1(M)$.
The collection of horospherical limit points of $\pi_1(M)$ is called the horospherical limit set
$\Lambda_H$ of $\pi_1(M)$.
\end{defn}

These definitions are related by the following combination of results of Eberlein and Ledrappier, who consider the much more general context of complete manifolds of pinched negative curvature.

\begin{prop}\cite[Proposition 4]{ledrappier},\cite[Proposition 7.4]{eberlein-visibility}
\label{ameqnt}
Let $\gamma$ be a geodesic ray in a negatively curved manifold $M$. The following are equivalent:
\begin{enumerate}
\item $\gamma$ is almost minimizing
\item $\gamma$ is asymptotically almost minimizing
\item $\tilde\gamma (\infty) \in (\partial \til{M} \setminus \Lambda_H)$ 
\end{enumerate}
\end{prop}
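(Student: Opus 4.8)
The plan is to prove the chain of equivalences $(1) \Leftrightarrow (2)$ and $(1) \Leftrightarrow (3)$ directly, working in the universal cover $\til M = \bH^3$ with a fixed base-point $o = \gamma(0)$ and its lift $\til\gamma$. The only genuinely new work is packaging the quoted propositions of Ledrappier and Eberlein; the novelty here is expository, so I would emphasize the dictionary rather than re-deriving the pinched-curvature estimates.

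First I would record the easy implication $(2) \Rightarrow (1)$: if $\gamma$ is asymptotically almost minimizing, pick $\delta = 1$ and the corresponding $T$; then for $t \geq T$ one has $d_M(\gamma(T), \gamma(t)) \geq (t - T) - 1$, and since the arc of $\gamma$ from $\gamma(0)$ to $\gamma(T)$ has length $T$, the triangle inequality $d_M(\gamma(0),\gamma(t)) \geq d_M(\gamma(T),\gamma(t)) - T \geq t - 2T - 1$ gives that $\gamma$ is $(2T+1)$-almost minimizing. The converse $(1) \Rightarrow (2)$ is the substantive direction and is exactly \cite[Proposition 7.4]{eberlein-visibility} (applied to the visibility manifold $\bH^3$): an almost minimizing ray is, after forgetting a bounded initial segment, asymptotically almost minimizing. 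I would cite this and, if desired, sketch the idea: if $\gamma$ failed to be asymptotically almost minimizing there would be $\delta > 0$ and times $s_n, t_n \to \infty$ with $d_M(\gamma(s_n), \gamma(t_n)) \leq |s_n - t_n| - \delta$; concatenating a geodesic realizing this distance with the sub-ray of $\gamma$ from $\gamma(t_n)$ onward produces, in the limit, a path from $\gamma(0)$ that is shorter than $\gamma$ by a definite amount on an unbounded set of scales, contradicting the uniform bound in the definition of $C$-almost minimizing.

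Next I would handle $(1) \Leftrightarrow (3)$, which is \cite[Proposition 4]{ledrappier}. The translation is: $\til\gamma(\infty) \in \Lambda_H$ means that some (equivalently every) horoball $B$ based at $\til\gamma(\infty)$ contains infinitely many orbit points $g \cdot o$. In terms of the Busemann function $b_{\til\gamma(\infty)}$ normalized so that $b_{\til\gamma(\infty)}(\til\gamma(t)) = -t$, membership of $g \cdot o$ in ever-deeper horoballs is the statement $b_{\til\gamma(\infty)}(g \cdot o) \to -\infty$ along a sequence, i.e.\ the orbit penetrates horoballs based at the endpoint arbitrarily deeply. On the other hand, for a point $g \cdot o$ with $b_{\til\gamma(\infty)}(g \cdot o) = -\tau$ one can estimate, by projecting to the geodesic $\til\gamma$, that $d_{\til M}(o, g \cdot o) \leq \tau + o(\tau)$ with the error controlled by how far $g\cdot o$ is from $\til\gamma$; downstairs in $M$ this shows that if the orbit enters horoballs deeply then $\gamma$ comes back close to itself — more precisely, it exhibits loops based near $\gamma$ whose existence contradicts $d_M(\gamma(0),\gamma(t)) \geq t - C$. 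Conversely, if $\gamma$ is not almost minimizing, a sequence of shortcuts downstairs lifts to orbit points deep inside horoballs at $\til\gamma(\infty)$, putting $\til\gamma(\infty)$ in $\Lambda_H$. I would state these as the content of Ledrappier's proposition and give the Busemann-function reformulation as the bridge.

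The main obstacle, such as it is, is purely bookkeeping: making sure the quoted results, which are stated in the generality of complete pinched-negatively-curved manifolds, apply verbatim here — they do, since $\bH^3$ is a visibility manifold with no conjugate points and $\pi_1(M)$ acts properly discontinuously — and reconciling the three slightly different normalizations (the $C$-almost-minimizing inequality, the asymptotic $\delta$-condition, and the "infinitely many translates in every horoball" condition). I would therefore keep the proof short: cite \cite[Proposition 7.4]{eberlein-visibility} for $(1) \Leftrightarrow (2)$, cite \cite[Proposition 4]{ledrappier} for $(1) \Leftrightarrow (3)$, and supply only the one-line triangle-inequality argument for $(2) \Rightarrow (1)$ and the Busemann-function dictionary that identifies the horospherical limit set condition with the metric condition, so the reader sees why these are the same statement.
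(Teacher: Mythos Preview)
The paper does not supply a proof for this proposition; it is stated purely as a citation of \cite[Proposition 4]{ledrappier} and \cite[Proposition 7.4]{eberlein-visibility}, with the remark that these authors work in the generality of complete manifolds of pinched negative curvature. Your proposal is consistent with this --- you cite the same two sources for the two nontrivial implications --- but you go further by supplying expository sketches and the easy direction $(2)\Rightarrow(1)$.

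Your additions are essentially correct. The triangle-inequality argument for $(2)\Rightarrow(1)$ is fine. Your Busemann-function dictionary for $(1)\Leftrightarrow(3)$ is the right idea, though it can be made sharper: writing $\xi=\til\gamma(\infty)$ and $o=\til\gamma(0)$, one has
\[
\sup_{t}\bigl(t-d_M(\gamma(0),\gamma(t))\bigr)=\sup_{t}\sup_{g\in\pi_1(M)}\bigl(t-d_{\til M}(\til\gamma(t),g\cdot o)\bigr)=\sup_{g}\bigl(-b_\xi(g\cdot o)\bigr),
\]
using that $t\mapsto t-d_{\til M}(\til\gamma(t),p)$ is nondecreasing with limit $-b_\xi(p)$. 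Thus $\gamma$ fails to be almost minimizing exactly when $\inf_g b_\xi(g\cdot o)=-\infty$, i.e.\ when $\xi\in\Lambda_H$. This is cleaner than the ``$\gamma$ comes back close to itself'' phrasing, which is slightly misleading. Your sketch for $(1)\Rightarrow(2)$ is vague as written (the concatenation-and-limit argument needs negative curvature to control the straightening), but since you are explicitly deferring to Eberlein for this step, that is acceptable.
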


In \cite[Theorem 5.2]{eberlein1}, Eberlein shows the existence of dense horospheres for negatively curved manifolds satisfying Axiom 1 (any $2$ points in the boundary at infinity of the universal cover are joined by at least one geodesic) for which the nonwandering set $\Omega$ is the whole unit tangent bundle $SM$. 
Manifolds satisfying Axiom 1 are called {\bf visibility manifolds}.
Complete negatively curved manifolds of pinched negative curvature are examples of visibility manifolds.
Hyperbolic manifolds are, therefore,  visibility manifolds. Furthermore, classical results results imply that when $M=\bH^n/G$ then $\Omega=SM$ if and only if $\Lambda_G=\partial_\infty\bH^n$. Thus if $M$ is a hyperbolic 3-manifold
with finitely generated fundamental group, then $\Omega=SM$ if and only if $M$ is either degenerate or 
has finite volume. We state the next two Theorems, due to Eberlein, in their full generality but for our purpose
in this paper, the reader can replace "Let $M$ be a negatively curved visibility manifold such that $\Omega=SM$" with "Let $M$ be a degenerate hyperbolic 3-manifold".

\begin{theorem}\label{densev}
Let $M$ be a negatively curved visibility manifold  such that $\Omega=SM$. Then there exists a vector $v \in SM$, such that
the strong stable manifold  $(W^{ss}(v))$ is dense in $SM$.
\end{theorem}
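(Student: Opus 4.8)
A stable horosphere $W^{ss}(v)\subset SM$ is dense iff, after lifting, the $\pi_1(M)$-orbit of the corresponding horosphere with its inward unit normal field is dense in $S\til M$; I would derive such density from \emph{topological transitivity} of the geodesic flow $g$ on $SM$, which is exactly where the hypotheses $\Omega=SM$ and ``$M$ is a visibility manifold'' enter. To see transitivity, given nonempty open $U,V\subseteq SM$ one wants $t>0$ with $g_tU\cap V\neq\emptyset$: lift to $S\til M$; since $\Omega=SM$ the limit set of $\pi_1(M)$ is all of $\partial\til M$ (the classical fact quoted above), the visibility axiom produces geodesics joining prescribed pairs of boundary points, and the non-wandering hypothesis lets an orbit return, so concatenating pieces one routes an orbit from near $U$ into $V$. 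By Baire category there is then a dense $G_\delta$ of vectors with dense forward orbit. This is Eberlein's argument in \cite{eberlein1}, which I would follow.

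\noindent\textbf{Step 2 (limiting construction).}
Fix a countable basis $\{U_n\}_{n\ge1}$ for $SM$. The plan is to produce the required vector as a limit $v=\lim_n v_n$: at stage $n$ one has arranged that $W^{ss}(v_n)$ contains points projecting into $U_1,\dots,U_n$, realized at intrinsic leaf-distances at most $L_1<\dots<L_n$ from a fixed base vector of the leaf. To pass to stage $n+1$ one perturbs $v_n$ to $v_{n+1}$ so as to steer a portion of $W^{ss}(v_{n+1})$ at leaf-distance $\approx L_{n+1}$ into a $\pi_1(M)$-translate of a lift of $U_{n+1}$; that some such perturbation exists uses transitivity together with $\Lambda=\partial\til M$, which guarantees that the deep parts of horospheres, moved around by the group, visit a dense set of positions and directions. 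The key quantitative point is that a stable horosphere \emph{contracts} under the forward geodesic flow, so a perturbation imposed at leaf-distance $\approx L_{n+1}$ costs only order $e^{-cL_{n+1}}$ near the base vector; taking $L_{n+1}$ large enough one arranges $d(v_n,v_{n+1})=\delta_n$ with $\sum_n\delta_n<\infty$ and each $\delta_n$ so small that the finitely many visits already secured (occurring at the fixed leaf-distances $L_1,\dots,L_n$) persist under all later perturbations. Then $v=\lim_n v_n$ exists, $W^{ss}(v)$ meets every $U_n$, and hence $W^{ss}(v)\subset SM$ is dense.

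\noindent\textbf{Main obstacle.}
The substance of the proof is the simultaneous bookkeeping in Step 2: each adjustment forcing the horosphere into $U_{n+1}$ must not undo the visits already secured, and --- in contrast with the compact Anosov case --- $SM$ is non-compact, so the convenient heuristic ``a large piece of an (un)stable leaf is $\epsilon$-dense'' fails and must be replaced by transitivity-driven routing controlled by the approximate-product-neighbourhood estimates for visibility manifolds. Making this quantitative --- with the uniform (not exact) contraction and expansion rates that pinched negative curvature affords --- is exactly \cite[Theorem 5.2]{eberlein1}, whose argument I would reproduce.

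\noindent Finally, for orientation only (this route is circular within the present section): by Proposition~\ref{ameqnt} and part~(1) of Theorem~\ref{prel}, $W^{ss}(v)$ is dense precisely when the forward endpoint of a lift of $v$ lies in $\Lambda_H$, so the statement would reduce to $\Lambda_H\neq\emptyset$; and this is immediate, since the conical limit set is nonempty --- it contains the fixed points of loxodromic elements of $\pi_1(M)$ --- and is contained in $\Lambda_H$. Since Theorem~\ref{prel} is assembled only after the present statement, the construction above is the honest route.
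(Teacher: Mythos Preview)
The paper does not supply its own proof of this statement: it is quoted as background from Eberlein \cite[Theorem 5.2]{eberlein1} without argument, so there is nothing to compare against on the paper's side.

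Your two-step outline --- topological transitivity of the geodesic flow from visibility plus $\Omega=SM$, followed by an inductive perturbation/limit construction exploiting the exponential contraction of stable leaves under forward flow --- is a reasonable high-level sketch of the standard argument and is consistent with what you would find in \cite{eberlein1}. The bookkeeping you flag as the ``main obstacle'' (ensuring that each new visit does not destroy the earlier ones, in the absence of the compact-Anosov shortcut that large leaf-pieces are automatically $\epsilon$-dense) is indeed where the work lies, but your description is only a plan rather than a proof: the actual uniform estimates replacing ``order $e^{-cL_{n+1}}$'' would need to be stated and verified in the visibility setting.

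Your closing observation is worth noting: once Theorem~\ref{am} (Eberlein's characterization, also quoted without proof here) is in hand, the present existence statement is an immediate corollary of $\Lambda_H\neq\emptyset$, which follows since fixed points of loxodromics are conical hence horospherical. Whether this is circular depends on whether Eberlein's proof of his Theorem~5.5 invokes his Theorem~5.2; the paper is silent on this, but within the paper's own logical flow it is only a forward reference, not a loop.
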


Going further \cite[Theorem 5.5]{eberlein1}
Eberlein relates the density of horospheres to almost minimizing geodesic rays: 

\begin{theorem}\label{am}
Let $M$ be a negatively curved visibility manifold  such that $\Omega=SM$. Then  $(W^{ss}(v))$ is dense in $SM$
if and only  if $v$ is {\bf not} almost minimizing.
\end{theorem}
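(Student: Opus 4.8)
The plan is to pass to the universal cover $\til M$ and reformulate the statement in terms of the forward endpoint $\xi:=\tilde\gamma(+\infty)\in\partial\til M$ of a lift $\tilde\gamma$ of $\gamma_v$. Since the footpoint projection $p\colon S\til M\to SM$ exhibits $SM$ as the quotient of $S\til M$ by $\pi_1(M)$ and is a covering map, $p(A)$ is dense in $SM$ if and only if $\pi_1(M)\cdot A$ is dense in $S\til M$; applied to $A=W^{ss}(\tilde v)$ this reduces the problem to $\til M$. By the definitions recalled above, $W^{ss}(\tilde v)$ is exactly the set of unit normals to the horosphere $\HH_\xi\subset\til M$ through the footpoint $o$ of $\tilde v$ that point toward $\xi$; hence $\pi_1(M)\cdot W^{ss}(\tilde v)$ is the set of inward normals to the family of horospheres $\{g\HH_\xi\}_{g\in\pi_1(M)}$, with $g\HH_\xi$ based at $g\xi$. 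Finally, by Proposition~\ref{ameqnt}, $\gamma_v$ is almost minimizing precisely when $\xi\notin\Lambda_H$. So the claim to be proved is: the set of inward normals to the horospheres $g\HH_\xi$, $g\in\pi_1(M)$, is dense in $S\til M$ if and only if $\xi\in\Lambda_H$.

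For the implication ``dense $\Rightarrow$ not almost minimizing'' I would argue the contrapositive. If $\xi\notin\Lambda_H$ then, by definition, some horoball based at $\xi$ contains only finitely many points of the orbit $\pi_1(M)\cdot o$; shrinking it, we obtain a horoball $B=\{\beta_\xi<-r_0\}$ with $\pi_1(M)\cdot o\cap B=\emptyset$, where $\beta_\xi$ is the Busemann function normalized by $\beta_\xi(o)=0$; equivalently $\beta_\xi(g^{-1}o)\ge-r_0$ for all $g\in\pi_1(M)$. Fix $s>r_0$ and replace $v$ by its image under the time-$s$ geodesic flow: this changes neither the density of $W^{ss}(v)$ (the flow is a homeomorphism of $SM$) nor the point $\xi$, but now the footpoints of the lifted stable horosphere lie on $\{\beta_\xi=-s\}$. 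Since $x\mapsto\beta_\xi(g^{-1}x)$ is again a Busemann function, $d\big(o,\,g\{\beta_\xi=-s\}\big)=|\beta_\xi(g^{-1}o)+s|\ge s-r_0>0$ for every $g$. Thus the $\pi_1(M)$-orbit of the new stable horosphere has all its footpoints outside the ball of radius $s-r_0$ about $o$, so it is not dense in $S\til M$; hence $W^{ss}(v)$ is not dense in $SM$.

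For the converse I would build on Theorem~\ref{densev}, which provides a vector $v_0$ with $W^{ss}(v_0)$ dense in $SM$; by the previous paragraph $v_0$ is not almost minimizing, so the forward endpoint $\xi_0$ of a lift of $\gamma_{v_0}$ lies in $\Lambda_H$. I would then record a monotonicity principle coming from the continuity of the strong stable foliation: if $w\in\overline{W^{ss}(v)}$ then $\overline{W^{ss}(w)}\subseteq\overline{W^{ss}(v)}$ (write $w=\lim w_n$ with $w_n\in W^{ss}(v)$, note $W^{ss}(w_n)=W^{ss}(v)$, and slide any point of $W^{ss}(w)$ along the nearby leaves $W^{ss}(w_n)$). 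Granting this, for $v$ with $\xi\in\Lambda_H$ it suffices to prove $v_0\in\overline{W^{ss}(v)}$, since then $SM=\overline{W^{ss}(v_0)}\subseteq\overline{W^{ss}(v)}$. Concretely one must produce $g_n\in\pi_1(M)$ and points $q_n\in\HH_\xi$ with $g_n\xi\to\xi_0$ and $g_nq_n$ converging to the footpoint of a lift of $v_0$. The convergence of directions is arranged from the minimality of the action of $\pi_1(M)$ on $\Lambda=\partial\til M$ (so $\pi_1(M)\cdot\xi$ is dense), and the convergence of footpoints is arranged using $\xi\in\Lambda_H$, i.e.\ the existence of orbit points entering arbitrarily deep horoballs based at $\xi$: this is what forces translates of $\HH_\xi$ of the appropriate ``size'' to pass near any prescribed point of $\til M$.

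The step I expect to be the main obstacle is this last construction. The base point (the horosphere $g\HH_\xi$) and the base point at infinity $g\xi$ are coupled through the single group element $g$, so realizing $g_n\xi\to\xi_0$ and the footpoint condition at the same time requires a careful analysis of the Busemann cocycle along the orbit $\pi_1(M)\cdot o$ (equivalently, of the relevant Gromov products); it is precisely here --- and not in the reduction to $\til M$, the monotonicity principle, or the ``dense $\Rightarrow$ not almost minimizing'' direction --- that the \emph{horospherical}, rather than merely conical, nature of $\xi$ is essential. For the details one may follow the argument of \cite[Theorem~5.5]{eberlein1}.
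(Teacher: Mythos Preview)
The paper does not prove this theorem; it is quoted from \cite[Theorem~5.5]{eberlein1} without argument, so there is no in-paper proof to compare your proposal against.

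Your outline is essentially a sketch of Eberlein's own approach. The direction ``almost minimizing $\Rightarrow$ $W^{ss}(v)$ not dense'' is carried out correctly: the Busemann computation showing that every $\pi_1(M)$-translate of the horosphere $\{\beta_\xi=-s\}$ stays at distance at least $s-r_0$ from $o$ is sound, and the reduction via the time-$s$ geodesic flow is legitimate since $g^t W^{ss}(v)=W^{ss}(g^t v)$.

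For the converse you correctly isolate the strategy (monotonicity of leaf closures together with approximation of a known dense leaf $W^{ss}(v_0)$ supplied by Theorem~\ref{densev}) and the genuine difficulty: one must choose the $g_n$ so that \emph{simultaneously} $g_n\xi\to\xi_0$ and the translated horosphere $g_n\HH_\xi$ passes through the prescribed footpoint, and it is exactly here that the horospherical (not merely conical) hypothesis on $\xi$ is used. You do not carry this out, but defer to \cite[Theorem~5.5]{eberlein1}; that is no less than what the paper itself does, and in fact you have supplied considerably more detail than the paper, which offers none.
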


Any degenerate hyperbolic 3-manifold $M$ with infinite diameter has minimizing geodesic rays:  choose a sequence $p_n$ exiting any compact in $M$,
and, up to extracting a subsequence, take a  limit of minimizing geodesic segments $[o,p_n]$. Consequently, we have:

\begin{remark}\label{nondense-rem}
 Let $M$ be a degenerate hyperbolic 3-manifold. Then there exist minimizing geodesic rays $\gamma:\reals^+ \to M$ and hence
the horospheres $(W^{ss}(\dot\gamma(t)))$ are {\bf not} dense in $M$ for $t>0$. 
\end{remark}

\subsection{Thick and thin geodesics}
Next we want to discuss non-dense horospheres and hence almost minimizing geodesic rays. Works of Ledrappier (\cite{ledrappier}) and Coud\`ene-Maucourant (\cite{cm}) relate proper and recurrent horospheres with thick and thin geodesic rays. Let us first introduce thick and thick geodesic rays.

The injectivity radius of $M$ at a point $x\subset M$ is the maximal radius of an embedded ball centered at $x$. Let $p:\tilde M\to M$ be the covering projection, let $\tilde x\subset \tilde M$ be a lift of $x$ and let $B(\tilde x,r)$ be the ball with diameter $r$ centered at $\tilde x$. Then the injectivity radius at $x$ is:

$${\rm Inj}(x)=\sup\{r|p_{|B(\tilde x,r)}{\rm \, is \, an \,isometry}\}.$$

\begin{defn} A geodesic $\gamma:\reals^+ \to M$ is said to be {\bf thick} if 
$$liminf_t Inj {(\gamma(t))} >0.$$
Otherwise it is called {\bf thin}.
\end{defn}

It is easy to see that for a geodesic ray, the property of being almost minimizing, thin or thick (and exiting which will be defined later on) depends only on its ideal endpoint. In other word given two geodesic rays $\gamma_1,\gamma_2 : [0, \infty)\to M$ which have asymptotic lifts to $\tilde M$, then $\gamma_1$ is  thin, thick, exiting or almost minimizing if and only if $\gamma_2$ has the same property. Thus when dealing with these properties, it seems appropriate to parametrize geodesic rays by their endpoints or the endpoints of their lifts. For future reference, let us show this fact for almost minimizing ray.

\begin{lemma}	\label{minimizing-almost-minimizing}
A geodesic ray which has a lift to $\tilde M$ that is asymptotic to a lift of an almost minimizing geodesic is almost minimizing.
\end{lemma}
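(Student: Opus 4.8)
The plan is to reduce the statement to a quantitative comparison of distances in $M$ versus distances in $\til M$, using the fact that asymptotic lifts stay within a uniformly bounded Hausdorff distance from each other in $\bH^3$. Let $\gamma_2$ be the given ray, and suppose $\gamma_2$ has a lift $\til\gamma_2$ asymptotic to a lift $\til\gamma_1$ of an almost minimizing ray $\gamma_1$. First I would recall the standard hyperbolic-geometry fact: two geodesic rays in $\bH^3$ with the same endpoint at infinity are asymptotic, and in fact there is a constant $D$ (depending only on the initial displacement of the two rays, since in $\bH^3$ asymptotic rays converge exponentially) such that $d_{\til M}(\til\gamma_2(t),\til\gamma_1(t'))$ is $D$-close to its infimum for $t,t'$ large, and more precisely we can choose reparametrizations so that $d_{\til M}(\til\gamma_1(t),\til\gamma_2(t))\le D$ for all $t\ge 0$ after shifting one of them by a bounded amount. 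Pushing down via the covering projection $p$, which is $1$-Lipschitz, gives $d_M(\gamma_1(t),\gamma_2(t))\le D$ for all $t$.

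Next I would run the triangle inequality in $M$. Since $\gamma_1$ is $C$-almost minimizing, $d_M(\gamma_1(0),\gamma_1(t))\ge t-C$ for all $t$. Then
\[
d_M(\gamma_2(0),\gamma_2(t))\ge d_M(\gamma_1(0),\gamma_1(t)) - d_M(\gamma_1(0),\gamma_2(0)) - d_M(\gamma_1(t),\gamma_2(t)) \ge (t-C) - 2D,
\]
so $\gamma_2$ is $(C+2D)$-almost minimizing. One has to be mildly careful about the parametrization shift between $\til\gamma_1$ and $\til\gamma_2$: a bounded shift in the parameter only changes $t$ by a bounded additive amount, which is absorbed into the constant. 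This is the whole argument; it is essentially a stability-of-almost-minimizing-property statement under bounded perturbation, combined with the Lipschitz property of the covering map.

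The only genuine point requiring care — and the one I would expect to be the main (mild) obstacle — is making the bound $d_{\til M}(\til\gamma_1(t),\til\gamma_2(t))\le D$ uniform in $t$, as opposed to merely $\limsup$-bounded. This follows because in $\bH^3$ the distance between two asymptotic unit-speed geodesic rays, as a function of the parameter, is eventually monotone decreasing (convexity of the distance function along geodesics, plus the fact that the rays share an endpoint), hence bounded by its value at any fixed large time plus a controlled contribution from the initial segment; alternatively one invokes the explicit computation in a horoball coordinate system at the common endpoint $\til\gamma_1(\infty)=\til\gamma_2(\infty)$, where horospherical coordinates make the exponential convergence transparent. Once this uniform bound is in hand, everything else is the two-line triangle inequality above, and the lemma follows.
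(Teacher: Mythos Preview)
Your proof is correct and follows essentially the same approach as the paper: both arguments bound the distance between the two lifts uniformly via convexity of the distance function between asymptotic geodesics, project to $M$, and finish with the triangle inequality. The paper is simply more direct, observing that convexity plus boundedness of $t\mapsto d(\til\gamma_1(t),\til\gamma_2(t))$ forces this function to be nonincreasing (so one may take $D=d(\til\gamma_1(0),\til\gamma_2(0))$ with no reparametrization needed), whereas you arrive at the same point after a detour through parametrization worries.
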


\begin{proof}
Assume that a lift $\tilde\gamma$ of a geodesic ray $\gamma\subset M$ is asymptotic to a lift $\tilde\gamma'$ of a $C$-minimizing geodesic $\gamma'$. Then by
convexity of the distance between $2$ geodesics, we have $d(\tilde\gamma(t),\tilde\gamma'(t))\leq d(\tilde\gamma(0),\tilde\gamma'(0))$
for any $t>0$. Projecting to $M$, we get $d_M(\gamma(t),\gamma'(t))\leq d_M(\tilde\gamma(0),\tilde\gamma'(0))$  for any $t>0$. The
triangle inequality gives us $|d_M(\gamma(t),\gamma(0))-d_M(\gamma'(t),\gamma'(0))|\leq 2 d_M(\gamma(0),\gamma'(0))$. Thus we get $d_M(\gamma(t),\gamma(0))\geq t-C-2 d_M(\gamma(0),\gamma'(0))$.
\end{proof}

\begin{rem} \label{rem-minam}
	On the other hand, it is easy to construct two geodesic rays with asymptotic lifts such that one is minimizing and the 
other one is not (for example by adding a geodesic loop at the initial point of the minimizing ray and then straightening).
Thus almost minimizing geodesics depend only on the end-point on the sphere at infinity, while minimizing geodesics 
depend on the initial point also. This is the reason why we deal with almost minimizing rather than minimizing
geodesics in most of this paper.
\end{rem}

When there is a positive lower bound on the injectivity radius at any point of $M$, then $M$ is said to have {\bf bounded geometry} and obviously, every geodesic ray is thick. Otherwise $M$ is said to have {\bf unbounded geometry} and the situation is almost opposite, i.e.\ almost every geodesic ray is thin:

\begin{lemma} \label{nodich} Let $M$ be a degenerate hyperbolic 3-manifold with no positive lower bound
on injectivity radius. Then the preimage in the universal cover of the set of thin geodesic rays (parametrized by $S^2_\infty$)
 is of full measure (for the Lebesgue measure).
\end{lemma}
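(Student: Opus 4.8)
The plan is to deduce the statement from ergodicity of the geodesic flow, using the fact (established in the discussion preceding Lemma~\ref{minimizing-almost-minimizing} and in Remark~\ref{rem-minam}) that thinness of a ray depends only on the ideal endpoint of one of its lifts. First I would reformulate the problem inside the unit tangent bundle. For $\eps>0$ let $M_{<\eps}:=\{x\in M:{\rm Inj}(x)<\eps\}$. Because ${\rm Inj}$ is continuous and, by hypothesis, $\inf_M{\rm Inj}=0$, each $M_{<\eps}$ is a nonempty open subset of $M$, hence of positive Riemannian volume; let $U_\eps\subset SM$ be its full preimage under the bundle projection, so $U_\eps$ is open with positive Liouville measure $\mu(U_\eps)>0$. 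By definition a geodesic ray $\gamma\subset M$ is thin precisely when, for every $n\geq1$, one has $g(t,\dot\gamma(0))\in U_{1/n}$ for some sequence $t\to+\infty$.

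The key input is that, since $M$ is degenerate, its limit set is all of $\bS^2$, and therefore the geodesic flow $g$ on $SM$ (which carries infinite total Liouville measure, as $M$ has infinite volume) is conservative and ergodic with respect to $\mu$; this follows from topological tameness together with the resolution of the Ahlfors measure conjecture — i.e. Sullivan's conservative/dissipative dichotomy combined with the recurrence results of Thurston, Sullivan and Canary. Granting this, fix $n$ and set $A_n:=\{v\in SM: g(t,v)\in U_{1/n}\ \text{for some sequence}\ t\to+\infty\}$, a $g$-invariant set. Poincar\'e recurrence for the conservative flow applied to the positive-measure set $U_{1/n}$ gives $\mu(U_{1/n}\setminus A_n)=0$, hence $\mu(A_n)\geq\mu(U_{1/n})>0$; since $A_n$ is flow-invariant, ergodicity upgrades this to $\mu(SM\setminus A_n)=0$. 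Intersecting over $n$, the set $\Theta:=\bigcap_{n}A_n$ of ``thin vectors'' in $SM$ satisfies $\mu(SM\setminus\Theta)=0$.

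It then remains to transfer this full-measure statement on $SM$ to one on $\bS^2_\infty$. Since thinness of the forward ray of $v$ depends only on the forward endpoint $\xi_+(\til v)\in\partial\til M$ of a lift $\til v$ of $v$, the set $\Theta$ is the preimage under $v\mapsto\xi_+(\til v)$ of a ($G$-invariant) subset $\Theta_\infty\subseteq\partial\til M=\bS^2_\infty$, and the lemma is exactly the assertion that $\Theta_\infty$ has full Lebesgue measure. Lifting, $\til\Theta:=\{\til v\in S\til M:\xi_+(\til v)\in\Theta_\infty\}$ is $G$-invariant and projects onto $\Theta$; writing $\mu$ as the quotient by $G$ of the Liouville measure $\til\mu$ on $S\til M$ via a Borel fundamental domain, $G$-invariance of $\til\Theta$ together with $\mu(SM\setminus\Theta)=0$ forces $\til\mu(S\til M\setminus\til\Theta)=0$. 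Using the identification $S\til M\cong\til M\times\partial\til M$, $\til v\mapsto(\pi(\til v),\xi_+(\til v))$, under which $\til\mu$ disintegrates as $\int_{\til M}\nu_x\,d{\rm vol}(x)$ with $\nu_x$ the visual (Poisson) measure on $\partial\til M$ seen from $x$, and noting that all the $\nu_x$ are mutually absolutely continuous and lie in the Lebesgue class with smooth positive densities, one gets $\nu_x(\partial\til M\setminus\Theta_\infty)=0$ for almost every, hence every, $x$; therefore $\Theta_\infty$ has full Lebesgue measure in $\bS^2_\infty$. Finally, for any base-point $o$ the geodesic ray from $o$ with ideal endpoint $\xi$ is thin iff $\xi\in\Theta_\infty$, which is precisely the claim.

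The one genuinely substantive ingredient is the conservativity and ergodicity of the geodesic flow of a degenerate hyperbolic $3$-manifold invoked in the second paragraph; everything else — openness and positive volume of the thin parts, Poincar\'e recurrence, passage to a fundamental domain, and absolute continuity of visual measures — is routine bookkeeping. A minor point deserving care is checking that ``thinness depends only on the endpoint'' is uniform enough to produce a well-defined $G$-invariant set $\Theta_\infty$ independent of base-point, but this is already contained in the material surrounding Lemma~\ref{minimizing-almost-minimizing}.
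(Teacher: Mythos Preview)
Your proof is correct and arrives at the same conclusion as the paper, but the packaging is genuinely different. The paper works directly on the sphere at infinity: for each $e>0$ it takes the shadow $Sh(\til{M_e})\subset S^2_\infty$ of the lift of the $e$-thin part, observes that this set is $\pi_1(M)$-invariant and has positive Lebesgue measure (it contains the shadow of a single lifted Margulis tube), invokes ergodicity of the boundary action of $\pi_1(M)$ to conclude full measure, and then intersects over $e=1/n$. You instead lift the problem to $SM$ with Liouville measure, use conservativity of the geodesic flow (Poincar\'e recurrence) to get infinitely many returns to the thin part, use ergodicity of the flow to upgrade to full Liouville measure, and then push down to $S^2_\infty$ by disintegrating along the visual measures.

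Both arguments are at heart ``invariant set of positive measure plus ergodicity gives full measure''; the substantive dynamical input (ergodicity of the boundary action, equivalently of the geodesic flow, for a degenerate manifold) is the same. The paper's route is shorter and avoids the bookkeeping of the $SM\to S^2_\infty$ transfer. Your route costs the extra step of disintegration but handles the ``for arbitrarily large $t$'' requirement for thinness directly via recurrence, whereas the paper's shadow records only a single visit and relies on the intersection over $n$ (together with continuity of the injectivity radius, forcing the visit times to go to infinity) to recover thinness. Your formulation also sidesteps a small delicacy in the paper's claim that the shadow from a fixed basepoint is exactly $\pi_1(M)$-invariant; strictly speaking $g\cdot Sh_o(\til{M_e})=Sh_{g\cdot o}(\til{M_e})$, so one should really work with the limsup shadow or argue up to null sets, which your flow-invariant sets $A_n$ do automatically.
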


\begin{proof}
For every $e>0$, let $M_e \subset M$ be the set of points having injectivity radius at most $e$
and let $\til{M_e}$ be the full pre-image of $M_e$ in $\til M$ under the (universal) covering map.
Fix a base-point $o \in (\til{M} \setminus \til{M_e})$. Recall \cite{sullivan-pihes, sullivan-rigidity}
that the {\bf shadow}
 of a subset $K \subset \til{M}$ is the subset of $S^2_\infty (=\partial \Hyp^3)$ given by $$Sh(K) = \{ x \in S^2_\infty:
[o,x) \cap K \neq \emptyset \}.$$

Since $\til{M_e}$ is invariant under $\pi_1(M)$, its
 shadow $Sh(\til{M_e})$  is also invariant under the action of $\pi_1(M)$ on $S^2_\infty$. 
Next,  the action of $\pi_1(M)$ on $S^2_\infty$ is ergodic \cite{minsky-elc2}. Also, since $Sh(\til{M_e})$ contains
the shadow of a lift of a single Margulis tube, it follows that $Sh(\til{M_e})$ is of positive measure.
By ergodicity, $Sh(\til{M_e})$
is of full measure. We take intersections over $e_n = 1/n$ to conclude that thin geodesics are of full measure.

The last statement follows.
\end{proof}

Notice that a closed geodesic is always thick and hence $M$ always contains thick geodesic rays. As we will see later, degenerate hyperbolic 3-manifold with 
unbounded geometry always contain thick exiting geodesic rays (though the collection of such rays has measure zero
by Lemma \ref{nodich}).
 On the other hand, there are hyperbolic surfaces without cusps for which all exiting geodesic rays are thin. Let us construct an example:

\begin{lemma} \label{simpledich} There exists
 a complete hyperbolic surface $\Sigma$ of infinite genus such that the following dichotomy holds 
for geodesic rays $\gamma$ on  $\Sigma$:

\begin{enumerate}
\item Either $\gamma$ lies inside a compact set
\item or $\gamma$ is thin.
\end{enumerate} 
\end{lemma}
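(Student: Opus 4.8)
The plan is to build $\Sigma$ as an infinite-genus surface by gluing together a sequence of ``building blocks'' in such a way that the thick part of $\Sigma$ is contained in a single compact piece, and every end of $\Sigma$ has injectivity radius tending to $0$. Concretely, I would take a base compact surface $\Sigma_0$ with one boundary geodesic, and then attach successively handles $H_1, H_2, \ldots$ along shrinking ``waist'' curves: $\Sigma = \Sigma_0 \cup H_1 \cup H_2 \cup \cdots$, where $H_n$ is glued to the previous piece along a simple closed geodesic $c_n$ with $\length(c_n) = \ell_n \to 0$. The curves $c_n$ separate $\Sigma$ into nested pieces, and the complement of $\Sigma_0 \cup \cdots \cup \Sigma_{n}$ lies ``beyond'' the collar of $c_n$. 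One must choose the gluing and the interior geometry of the $H_n$ so that the resulting surface is complete (no cusps, no funnel ends) — the standard way is to make each $H_n$ itself of bounded diameter but with a short waist, so the surface is exhausted by the compact pieces and metric completeness is automatic.

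The key geometric input is the \emph{collar lemma}: a simple closed geodesic of length $\ell$ has an embedded collar of width $w(\ell) \to \infty$ as $\ell \to 0$, and within that collar every point has injectivity radius $\le \ell/2$ on the core and bounded by a function $\to 0$ as one approaches the center of a very thin collar. More precisely, I would record that for a point $x$ in the collar of $c_n$ at distance $d$ from the core, $\inj(x) \le C e^{-d}$ or is at most $\ell_n \cosh(d)$-type bound in the appropriate regime; the only fact actually needed is that \emph{all} points of $\Sigma$ lying outside $\Sigma_0$ lie in some collar of some $c_n$, and that these collars are arranged so that a geodesic ray either stays in a fixed compact set or must cross infinitely many of the $c_n$ or penetrate deep into one collar of some $c_n$ — in either case it passes through points of injectivity radius $\to 0$.

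The structure of the dichotomy argument is then: let $\gamma$ be a geodesic ray. If $\gamma$ is contained in $\Sigma_0 \cup \cdots \cup \Sigma_N$ for some $N$, then it lies in a compact set and we are in case (1). Otherwise $\gamma$ exits every compact piece, so it crosses the curve $c_n$ for infinitely many $n$ (using that the $c_n$ separate $\Sigma$ and $\gamma$ is proper once it leaves all compact pieces — here one needs that a geodesic cannot exit to infinity without crossing the separating waists, which follows from the nested structure). Each time $\gamma$ crosses $c_n$ it passes within distance, say, $1$ of the core geodesic, hence through a point of injectivity radius at most a constant times $\ell_n$. Since $\ell_n \to 0$, this gives $\liminf_t \inj(\gamma(t)) = 0$, i.e. $\gamma$ is thin, which is case (2).

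The main obstacle I anticipate is ensuring \emph{completeness} of $\Sigma$ while keeping the waist curves short: one must verify that gluing infinitely many handles along rapidly shrinking geodesics does not accidentally create a metric-incomplete end or a cusp, and that the hyperbolic structure genuinely closes up (this is a matter of specifying the $H_n$ carefully — e.g. as copies of a fixed ``pair of pants with a handle'' shape with one short cuff length $\ell_n$ and the other boundary lengths bounded, using Fenchel–Nielsen coordinates). A secondary technical point is the claim that an exiting geodesic ray \emph{must} cross infinitely many of the separating curves $c_n$ rather than, say, spiraling; this is where one invokes that $\gamma$ is proper (being a geodesic ray exiting every compact set in a complete surface) together with the fact that the $c_n$ cut $\Sigma$ into a chain of compact blocks, so leaving block $n$ forces crossing $c_{n+1}$.
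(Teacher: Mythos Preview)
Your proposal is correct and follows essentially the same approach as the paper: build $\Sigma$ as a chain of compact genus-one pieces glued along separating geodesics whose lengths tend to zero, then observe that any unbounded geodesic ray must cross infinitely many of these short separating curves and is therefore thin. The paper's version is slightly leaner (two-holed tori $T_n$ with boundary lengths $1/n$ and $1/(n+1)$, and no appeal to the collar lemma since a ray crossing $c_n$ actually meets a point \emph{on} $c_n$, where the injectivity radius is at most $\ell_n/2$), but the construction and the dichotomy argument are the same.
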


\begin{proof}
Let $T$ be a torus with two holes.
Let $T_n$ ($n > 0$) be a hyperbolic structure on $T$ such that the boundary components
are totally geodesic and have length $\frac{1}{n}, \frac{1}{n+1}$. Also let $T_0$ be a hyperbolic
torus with one boundary component of length $1$. Attach $T_i$ to $T_{i+1}$ by an isometry
between the boundary components of length $\frac{1}{i+1}$. We call the resulting
geodesic the $(i+1)-$th neck.  Let $\Sigma = \bigcup_0^\infty T_i$ be the union
modulo this identification,
such that the neck between the $n-$th and $(n+1)-$th torus summands has length $\frac{1}{n+1}$. Since
any $i-$th neck disconnects $\Sigma$ into a compact piece and a noncompact piece, it follows that any geodesic 
ray 
in $\Sigma$ is either bounded or cuts every neck and is therefore  thin. 
\end{proof}

\subsection{Non-dense Horospheres}\label{nondense}
We are now ready to describe the behavior of horospheres corresponding to almost minimizing rays.
Let $\Pi : \til{M} \rightarrow M$ be the covering projection.

\begin{defn} Let $\gamma := \gamma(t)$ be a geodesic in $M$. Then the stable horosphere $W^{ss}(\dot\gamma(t))$
is said to be {\bf recurrent} if the following holds:\\ Let
 $\dot\gamma_1(t)$ be a lift of $\dot\gamma(t)$ to the unit tangent bundle $S\til{M}$. Then for every compact $K \subset 
W^{ss} (\dot\gamma_1(t)) (\subset S\til{M})$, there exists a vector $w \in (W^{ss} (\dot\gamma_1(t)) \setminus K)$ such that 
its projection (under $\Pi$) is arbitrarily close to $\dot\gamma_1(t)$.\end{defn}

Notice that this seems to be a property of $\dot\gamma(t)$. But using the transitive action of the horocyclic flow, it is not hard to see that when the property above holds for $\dot\gamma(t)$ it holds for any vector $v\in W^{ss}(\dot\gamma(t))$. Thus it is a property of the stable horosphere. 

A relation between thick geodesic rays and embedded horospheres was first established by Ledrappier in \cite{ledrappier}:

\begin{theorem} \cite[Proposition 3]{ledrappier}
Let $M$ be a negatively curved manifold with bounded
geometry, $\gamma = \gamma(t): t \in \reals^+$ an asymptotically almost minimizing geodesic. Then the strong stable leaf
 $(W^{ss}(\dot\gamma(t)))$ is properly embedded in $M$ for $t > 0$.
\end{theorem}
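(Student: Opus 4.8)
The plan is to reduce the statement to a combinatorial property of the orbit and then feed in Proposition~\ref{ameqnt}. Write $\xi:=\til\gamma(\infty)$, $o:=\til\gamma(0)$, let $\Pi\colon\til M\to M$ be the covering projection, let $b_\xi$ be the Busemann function centred at $\xi$ normalised by $b_\xi(o)=0$ (so $b_\xi(\til\gamma(s))=-s$), and for $t>0$ let $\HH_t\subset\til M$ be the horosphere $\{b_\xi=-t\}$, i.e.\ the horosphere through $\til\gamma(t)$ centred at $\xi$; thus $\HH_t\into\til M$ is proper and $d(x,\HH_t)=|b_\xi(x)+t|$. Two preliminary reductions. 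First, the canonical diffeomorphism $\HH_t\to\HH_{t'}$ (moving each point along its ray to $\xi$) displaces every point by exactly $|t-t'|$ in $\til M$, so proper embeddedness of $\Pi|_{\HH_t}$ does not depend on $t>0$. Second, since $M$ has no parabolics and $\xi\notin\Lambda_H$ (by Proposition~\ref{ameqnt}, as $\gamma$ is asymptotically almost minimizing) is not a conical limit point, $\xi$ is fixed by no nontrivial deck transformation; hence the map $W^{ss}(\dot{\til\gamma}(t))\to SM$ is injective, and since $SM\to M$ is proper with compact fibres it suffices to prove that $\Pi|_{\HH_t}\colon\HH_t\to M$ is a proper map.

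First I would prove the reduction
\[
\Pi|_{\HH_t}\ \text{is proper}\iff b_\xi|_{\pi_1(M)o}\colon\pi_1(M)o\to\reals\ \text{is a proper function.}
\]
For $R>0$ we have $\HH_t\cap\Pi^{-1}\bigl(\bar B_M(\Pi o,R)\bigr)=\bigcup_{g}\bigl(\HH_t\cap\bar B_{\til M}(go,R)\bigr)$, the union over those $g\in\pi_1(M)$ with $d(go,\HH_t)\le R$, i.e.\ with $b_\xi(go)\in[-t-R,-t+R]$. Each piece has diameter at most $2R$ and lies within distance $2R$ of the foot of $go$ on $\HH_t$, which itself lies within $R$ of $go$; so the whole set is relatively compact in $\HH_t$ iff $\{go:b_\xi(go)\in[-t-R,-t+R]\}$ is bounded in $\til M$, iff — the orbit being discrete — this set is finite. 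Letting $R$ vary gives the equivalence.

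So everything comes down to showing that $b_\xi|_{\pi_1(M)o}$ is proper. By Proposition~\ref{ameqnt}, $\xi\notin\Lambda_H$ means precisely that some horoball at $\xi$ meets the orbit only finitely, i.e.\ $b_\xi|_{\pi_1(M)o}$ is \emph{bounded below} by some $c_0$ (indeed, after pushing the horoball inward, the orbit misses $\{b_\xi<c_0\}$ entirely). The substance of the theorem is to upgrade ``bounded below'' to ``proper'', i.e.\ to exclude an infinite sequence of distinct $g_n$ with $b_\xi(g_no)$ confined to a bounded window $[c_0,c_1]$. By bounded geometry the orbit is $2\epsilon_0$-separated, so such $g_n$ would satisfy $g_no\to\infty$ in $\til M$ and therefore $g_no\to\xi$ (the only ideal point along which $b_\xi$ stays bounded): the orbit would accumulate on $\xi$ tangentially and at bounded depth.

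The hard part is exactly to rule this out, and it is here that bounded geometry is used in earnest: almost minimizing by itself only delivers the boundedness of $b_\xi$ on the orbit recorded above, which is \emph{not} sufficient. One needs a genuinely quantitative consequence of the lower injectivity-radius bound — equivalently, of the absence of thin parts, equivalently, of the controlled intrinsic geometry of the bounded-depth slabs $\{c_0\le b_\xi\le c_1\}$ accumulating on $\xi$ — to conclude that no such tangential, bounded-depth accumulation of the orbit on $\xi$ can occur. This quantitative input is the content of Ledrappier's \cite[Proposition~3]{ledrappier} (and, in the two-dimensional flute-surface setting, of Haas \cite{haas}); granting it, the reduction above closes the proof.
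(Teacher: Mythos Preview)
The paper does not give its own proof of this statement: it is quoted verbatim from Ledrappier, with only the parenthetical remark that Ledrappier's argument (via his Lemma~3) uses merely the lower bound on injectivity radius \emph{along} the geodesic rays asymptotic to $\xi$, not global bounded geometry. So there is no in-paper proof to compare against beyond that remark.

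Your reduction is correct and worth keeping: proper embeddedness of $W^{ss}(\dot\gamma(t))$ is equivalent to properness of $b_\xi$ restricted to the orbit $\pi_1(M)o$, and Proposition~\ref{ameqnt} does give that $b_\xi$ is bounded below on the orbit. But your last paragraph is circular. You say that ruling out an infinite orbit sequence with $b_\xi(g_no)$ confined to a bounded window ``is the content of Ledrappier's \cite[Proposition~3]{ledrappier}'' and then ``grant'' it --- but Ledrappier's Proposition~3 \emph{is} the theorem you are asked to prove. Your equivalence shows precisely that ``$b_\xi|_{\pi_1(M)o}$ is proper'' is a restatement of the conclusion, not a reduction of it; the passage from ``bounded below'' to ``proper'' is the entire content of the result. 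As written, you have a clean reformulation but no proof.

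The paper's hint about Ledrappier's actual approach points in a different direction from yours: rather than studying the orbit map $g\mapsto b_\xi(go)$, Ledrappier works with geodesic rays $\delta$ asymptotic to $\xi$ and uses the lower bound on $\mathrm{Inj}(\Pi(\delta(t)))$ directly to control how far points on the horosphere can be moved by deck transformations. If you want to complete your orbit/Busemann approach instead, you must supply a self-contained argument that bounded geometry forbids infinitely many $2\epsilon_0$-separated orbit points from accumulating on $\xi$ within a slab $\{c_0\le b_\xi\le c_1\}$ --- and a naive packing argument will not do this, since such a slab has infinite volume.
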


Ledrappier's proof (c.f.\  \cite[Lemma 3]{ledrappier}) only uses the fact that
 the injectivity radius $Inj (\Pi(\delta(t))$ is bounded away from zero {\bf along} $\delta(t)$ (for $t>0$) for any geodesic ray $\delta$ such that $\delta(t)\subset W^{ss}(\dot\gamma(t))$ (for some and hence all $t$). As was noticed earlier this is equivalent to having the injectivity radius $Inj (\Pi(\gamma(t))$ bounded away from zero {\bf along} $\gamma(t)$  (for $t>0$).
 
Thus in the context of degenerate 3-manifolds, we can restate Ledrappier's result \cite[Proposition 3]{ledrappier}:

\begin{theorem}\label{am-injrad1}
 Let $M$ be a degenerate hyperbolic 3-manifold and let $\gamma = \gamma(t): t \in \reals^+$  be a thick almost minimizing geodesic ray. Then the strong stable leaf
 $(W^{ss}(\dot\gamma(t)))$ is properly embedded in $M$ for $t > 0$. 
\end{theorem}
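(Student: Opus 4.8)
The plan is to reduce Theorem \ref{am-injrad1} to the statement of Ledrappier's \cite[Proposition 3]{ledrappier} by observing that every hypothesis that his proof actually uses is available here. Recall (from the discussion immediately preceding the statement) that Ledrappier's argument needs only a lower bound on the injectivity radius of $M$ \emph{along} the geodesic rays lying on the horosphere $W^{ss}(\dot\gamma(t))$, not a uniform lower bound on all of $M$. So the first step is to make precise the claim that, for a geodesic ray $\delta$ with $\dot\delta(t)\in W^{ss}(\dot\gamma(t))$ for some (hence all) $t$, thickness of $\gamma$ forces thickness of $\delta$ — in fact with the \emph{same} liminf constant up to a controlled loss.

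First I would fix lifts $\tilde\gamma,\tilde\delta$ to $\bH^3$ with $\tilde\gamma(\infty)=\tilde\delta(\infty)$, i.e.\ asymptotic lifts, which is exactly what ``$\dot\delta(t)$ lies on $W^{ss}(\dot\gamma(t))$'' means upstairs. By convexity of the distance function between two asymptotic geodesics in $\bH^3$ (the same estimate used in Lemma \ref{minimizing-almost-minimizing}), $d_{\bH^3}(\tilde\gamma(t),\tilde\delta(t))$ is non-increasing in $t$, hence bounded by $d_0:=d_{\bH^3}(\tilde\gamma(0),\tilde\delta(0))$ for all $t\ge 0$. Projecting, $d_M(\gamma(t),\delta(t))\le d_0$ for all $t$. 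Since the injectivity radius function on a hyperbolic manifold is $1$-Lipschitz (indeed $|\,{\rm Inj}(x)-{\rm Inj}(y)|\le d_M(x,y)$), we get ${\rm Inj}(\delta(t))\ge {\rm Inj}(\gamma(t))-d_0$; but this crude bound is useless when $d_0$ is large, so instead I would run the argument along a reparametrized comparison: for each $t$ the point $\gamma(t)$ is within $d_0$ of $\delta(s)$ for $s$ within $d_0$ of $t$ (triangle inequality on the asymptotic rays), and then invoke the fact — which is where thickness is genuinely used — that a \emph{thick} almost minimizing ray has $\liminf_t {\rm Inj}(\gamma(t))=:2\eta>0$. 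Because the thin part $M_{<\eta}$ decomposes into Margulis tubes, a geodesic staying $d_0$-close (after bounded reparametrization) to a ray that eventually avoids $M_{<\eta}$ must itself eventually avoid $M_{<\eta'}$ for some $\eta'=\eta'(\eta,d_0)>0$: entering a Margulis tube of core length $<\eta'$ and coming back out costs definite length (the tube is very deep), which would drag $\gamma$ into $M_{<\eta}$, a contradiction. Hence $\liminf_t {\rm Inj}(\delta(t))>0$, i.e.\ $\delta$ is thick, with a uniform constant depending only on $\gamma$ and $d_0$.

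The second step is then purely citational: having established that every $\delta$ with $\dot\delta(t)\in W^{ss}(\dot\gamma(t))$ is thick, and moreover that the injectivity radius along such $\delta$ is bounded below by a constant independent of $\delta$ (this uniformity comes out of the Margulis-tube argument above, since $\eta'$ depended only on $\eta$ and $d_0$, and $d_0$ is bounded over the horosphere once one lift is fixed), the hypothesis of Ledrappier's \cite[Proposition 3]{ledrappier}, in the form noted in the paragraph before the theorem, is satisfied. That result gives that $W^{ss}(\dot\gamma(t))$ is properly embedded in $M$ for $t>0$, and also (since $\gamma$ is almost minimizing) it is asymptotically almost minimizing by Proposition \ref{ameqnt}, so Ledrappier's statement applies verbatim.

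I expect the main obstacle to be the uniformity in the first step: getting the injectivity-radius lower bound along $\delta$ to be \emph{uniform over all of $W^{ss}(\dot\gamma(t))$}, not just finite for each individual $\delta$. The naive Lipschitz bound ${\rm Inj}(\delta(t))\ge {\rm Inj}(\gamma(t))-d_0$ degrades as $d_0\to\infty$, and $d_0$ is unbounded on the horosphere, so one really does need the Margulis-lemma mechanism — a geodesic cannot make a short detour into a very thin Margulis tube and return — rather than a soft comparison estimate. Making this quantitative (how deep a tube of core length $\le\eta'$ is, versus how far $\delta$ can stray from $\gamma$) is the technical heart; everything else is bookkeeping with asymptotic geodesics in $\bH^3$ and a citation.
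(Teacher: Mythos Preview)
Your approach is the same as the paper's: observe that Ledrappier's proof only needs the injectivity radius bounded below along each ray $\delta$ on the horosphere, verify this follows from thickness of $\gamma$, and cite \cite[Proposition~3]{ledrappier}. The paper does exactly this in the paragraph preceding the statement, together with the earlier remark (Section~2.2) that thickness depends only on the ideal endpoint.

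However, you overcomplicate the verification. You note that the naive Lipschitz bound $\mathrm{Inj}(\delta(t))\ge \mathrm{Inj}(\gamma(t))-d_0$ is useless and then invoke a Margulis-tube argument. But you are not using the full strength of asymptoticity in $\bH^3$: two unit-speed rays $\tilde\gamma,\tilde\delta$ with the same endpoint at infinity do not merely stay within $d_0$ of each other --- after a shift of parameter by a constant $c$ (so that $\tilde\gamma(0)$ and $\tilde\delta(c)$ lie on the same horosphere), one has $d_{\bH^3}(\tilde\gamma(t),\tilde\delta(t+c))\to 0$ exponentially. Projecting and using $1$-Lipschitzness of $\mathrm{Inj}$ gives directly
\[
\liminf_{t\to\infty}\mathrm{Inj}(\delta(t))=\liminf_{t\to\infty}\mathrm{Inj}(\gamma(t))>0,
\]
with no Margulis machinery needed. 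This is precisely the ``easy to see'' claim the paper makes at the start of Section~2.2.

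Your Margulis-tube argument is not wrong, but it is unnecessary, and your worry that ``$d_0$ is unbounded on the horosphere'' disappears once you use convergence rather than boundedness: the liminf is a genuine asymptotic invariant, independent of initial points. As for the uniformity-over-the-horosphere concern you flag at the end: the paper does not address it either and simply defers to Ledrappier's lemma; whatever uniformity is needed is already built into \cite[Lemma~3]{ledrappier}, and the hypothesis checked here (thickness along each asymptotic ray) is all that lemma requires.
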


A converse to Ledrappier's Theorem \ref{am-injrad1} is furnished by Coud\`ene and Maucourant
\cite[Section 3]{cm}, this time for complete manifolds with pinched negative curvature):

\begin{theorem}\cite[Section 3]{cm}	\label{am-injrad2}
Let $M$ be a complete manifolds with pinched negative curvature and let $\gamma = \gamma(t): t \in \reals^+$  be a thin geodesic ray. Then the strong stable leaf
 $(W^{ss}(\dot\gamma(t)))$ for $t>0$ is {\bf recurrent}. 
\end{theorem}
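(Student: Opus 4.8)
The plan is to establish the recurrence of $W^{ss}(\dot\gamma(t))$ for a thin geodesic ray $\gamma$ by exploiting the fact that $\gamma$ returns infinitely often to the thin part of $M$, together with the control of the geometry of horospheres in pinched negative curvature. The key mechanism is that a short closed geodesic (the core of a thin tube) gives a nontrivial deck transformation whose translation length is tiny, and such an element, applied to a point of the universal-cover horosphere that lies deep in the corresponding thin region, produces another point of the \emph{same} horosphere (up to the horocyclic flow) whose projection to $M$ is close to the original. Let me organize this.

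First I would set up notation: fix a lift $\dot\gamma_1(t)$ to $S\til M$, let $\xi = \til\gamma_1(+\infty) \in \partial\til M$ be its forward endpoint, and recall that $W^{ss}(\dot\gamma_1(t))$ is the set of inward unit normals to the horosphere $\HH_t$ centered at $\xi$ passing through $\til\gamma_1(t)$. Since $\gamma$ is thin, there is a sequence $t_n \to \infty$ with $\mathrm{Inj}(\gamma(t_n)) \to 0$; for each $n$ pick a deck transformation $g_n \in \pi_1(M)$ with $g_n \neq \mathrm{id}$ and $d(\til\gamma_1(t_n), g_n\til\gamma_1(t_n)) \to 0$. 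The natural first step is to understand where $g_n \HH_{t}$ sits relative to $\HH_t$. Using the Margulis lemma in pinched negative curvature, for $n$ large $g_n$ lies in a nearly-parabolic or short-loxodromic regime near $\til\gamma_1(t_n)$, and a horoball-based estimate (of the type: an isometry moving a point on a horosphere a small amount moves nearby points of that horosphere a small amount, with the error controlled by distance along the horosphere and the curvature bounds) shows that $g_n$ maps a large compact piece of $\HH_{t}$ to within small Hausdorff distance of another horosphere $\HH'_n$ centered near $\xi$.

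The heart of the argument — and the step I expect to be the main obstacle — is converting "$g_n$ moves part of $\HH_t$ a little" into "there is a genuine point $w \in W^{ss}(\dot\gamma_1(t)) \setminus K$ projecting near $\dot\gamma_1(t)$." One must be careful: $g_n \HH_t$ is in general a \emph{different} horosphere, not $\HH_t$ itself, so one cannot just take $w = g_n \dot\gamma_1(t)$. The fix, and this is where the transitivity of the horocyclic flow (mentioned just after the definition of recurrence) is used, is to first slide along $W^{ss}(\dot\gamma_1(t))$ to a vector $v$ based at a point of $\HH_t$ lying very deep inside the lift of the thin part around $\gamma(t_n)$ — such a point exists because $\HH_t$, being a horosphere asymptotic to $\xi$, eventually enters and has unbounded intersection with the preimage of any region that $\gamma$ enters cofinally — and then observe that for such deeply-buried $v$, the element $g_n$ (whose axis/fixed point structure is concentrated in that thin region) moves $v$ by an amount that tends to $0$, \emph{and} $g_n v$ again lies on $\HH_t$ up to an error that also tends to $0$. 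Projecting, $\Pi(g_n v) = \Pi(v)$ exactly, so $g_n v$ is a lift of $\Pi(v)$ lying on (an $\epsilon_n$-approximation of) $W^{ss}(\dot\gamma_1(t))$; a short perpendicular adjustment, again controlled by curvature, lands one on an honest vector $w_n \in W^{ss}(\dot\gamma_1(t))$ with $\Pi(w_n)$ arbitrarily close to $\Pi(v)$, hence (sliding back by the horocyclic flow in $M$) close to $\dot\gamma(t)$.

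Finally I would check the escaping condition: $w_n \notin K$ for any prescribed compact $K \subset W^{ss}(\dot\gamma_1(t))$. This holds because the $w_n$ are obtained from $v$ by applying the \emph{distinct} deck transformations $g_n$ (distinct since their translation lengths near different $t_n$ differ, or one passes to a subsequence to ensure $g_n \neq g_m$), so the base-points $g_n \cdot (\text{base of } v)$ in $\til M$ form an unbounded sequence — were they bounded, infinitely many would coincide by discreteness of $\pi_1(M)$, forcing $g_n = g_m$. Assembling these pieces gives, for every compact $K$ and every $\epsilon > 0$, a vector $w \in W^{ss}(\dot\gamma_1(t)) \setminus K$ with $d(\Pi(w), \dot\gamma_1(t)) < \epsilon$, which is exactly the definition of recurrence. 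I would remark that the only properties of $M$ used are discreteness of $\pi_1(M)$, the Margulis lemma, and convexity/comparison estimates for horospheres — all available under pinched negative curvature — which is why the statement holds in that generality rather than only for degenerate hyperbolic $3$-manifolds.
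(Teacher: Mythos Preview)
The paper does not give a proof of this statement at all; it is quoted directly from \cite[Section 3]{cm}. So there is no ``paper's own proof'' to compare against, and your proposal must be judged on its own merits.

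Your architecture is reasonable --- pick short loops $g_n$ at $\gamma(t_n)$, transport the information to the fixed horosphere $\HH_t$, and produce far-out vectors on $W^{ss}(\dot\gamma_1(t))$ projecting near the leaf --- but there is a genuine gap at the heart of Steps 5--7. You control only the displacement of the \emph{base point} of $v$ under $g_n$; you never control the \emph{direction}. Vectors in $W^{ss}(\dot\gamma_1(t))$ point toward $\xi$, while $g_n v$ points toward $g_n\xi$, so to get $g_n v$ close to the leaf in $S\til M$ you need the visual angle $\angle_{\mathrm{base}(v)}(\xi,\,g_n\xi)$ to be small. This simply fails in general: a loxodromic $g_n$ with tiny translation length $\ell_n$ but rotation part $\theta_n$ bounded away from $0$ (say $\theta_n=\pi$) moves every point on its axis by exactly $\ell_n$, yet rotates the visual sphere at such a point by $\theta_n$. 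In the concrete situation where $\mathrm{base}(v_n)\in\HH_t\cap A_n$, the inward normal to $\HH_t$ makes an angle $\alpha_n$ with $A_n$ that tends to $\pi/2$ (since $A_n$ crosses $\HH_t$ nearly tangentially when $t_n-t$ is large), so $\angle(\xi,g_n\xi)\approx 2\alpha_n$ stays bounded away from $0$. Your ``short perpendicular adjustment'' is then not short, and the conclusion $d(\Pi(w_n),\Pi(v))\to 0$ breaks down.

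The gap is repairable: replace $g_n$ by a power $g_n^{k_n}$ chosen (via Dirichlet/pigeonhole on the circle) so that $k_n\theta_n$ is close to $0\pmod{2\pi}$ while $k_n\ell_n$ is still small; then both the base-point displacement and the boundary rotation of $g_n^{k_n}$ tend to $0$, and your argument goes through. But this is a nontrivial extra step that you did not include. A secondary point: your justification in Step 9 (``distinct $g_n$ applied to a fixed $v$'') is not quite right, since $v=v_n$ varies with $n$; the correct reason $w_n$ escapes every compact $K$ is that $\mathrm{base}(v_n)\in\HH_t\cap A_n$ already tends to infinity along $\HH_t$ as $t_n\to\infty$.
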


\subsection{Summary of section \ref{dense-non-dense}}
Combining Theorem \ref{am}, Remark \ref{nondense-rem} and  Theorems  \ref{am-injrad1}, \ref{am-injrad2}, we get:

\begin{theorem} \label{omni} Let $M$ be a degenerate hyperbolic 3-manifold and let $\gamma$ be a geodesic ray in $M$.
 Then 
\begin{enumerate}
\item $W^{ss}(\gamma (t))$ is dense in $M$ if and only if $\gamma$ is {\bf not} almost minimizing.
\item $W^{ss}(\gamma (t))$ is recurrent but not dense in $M$
 if and only if $\gamma$ is thin almost minimizing.
\item $W^{ss}(\gamma (t))$ is properly embedded in $M$
 if and only if $\gamma$ is thick almost minimizing.
\end{enumerate}
\end{theorem}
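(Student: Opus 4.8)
The plan is to assemble Theorem \ref{omni} as a direct corollary of the results collected earlier in Section \ref{dense-non-dense}, treating the three items separately and then checking that the three dichotomies they encode are jointly exhaustive and mutually exclusive. First I would dispose of item (1): by Proposition \ref{ameqnt}, a geodesic ray in a degenerate hyperbolic 3-manifold is almost minimizing if and only if it is asymptotically almost minimizing, and a degenerate $M$ is a negatively curved visibility manifold with $\Omega = SM$ (because $\Lambda_G = \bS^2 = \partial_\infty \bH^3$, as recalled just before Theorem \ref{densev}). Hence Theorem \ref{am} applies verbatim and gives that $W^{ss}(\dot\gamma(t))$ is dense in $SM$ — equivalently, its projection is dense in $M$ — precisely when $\gamma$ is not almost minimizing. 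One small point to spell out: density in $SM$ versus density of the horosphere as a subset of $M$; these are equivalent since the horospherical foliation projects onto $M$, but I would state this identification once at the start and use it throughout.

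Next I would handle the "properly embedded" direction, item (3). If $\gamma$ is thick and almost minimizing, Theorem \ref{am-injrad1} (Ledrappier's result, restated for degenerate 3-manifolds) says directly that $W^{ss}(\dot\gamma(t))$ is properly embedded for $t>0$. Conversely, if $\gamma$ is almost minimizing but thin, then Theorem \ref{am-injrad2} (Coud\`ene–Maucourant) says $W^{ss}(\dot\gamma(t))$ is recurrent; and a recurrent horosphere is by definition not properly embedded (as noted in \textsection\ref{nondense}, recurrent horospheres "are not properly embedded" — this is the content that needs to be invoked, namely that recurrence produces, in every complement of a compact set in the leaf, a vector projecting arbitrarily close to $\dot\gamma_1(t)$, which is incompatible with properness). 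And if $\gamma$ is not almost minimizing at all, then by item (1) the horosphere is dense, hence certainly not properly embedded. So "properly embedded" forces "almost minimizing" (by contrapositive of item (1)) and then forces "thick" (by Theorem \ref{am-injrad2} plus recurrence $\Rightarrow$ not proper), giving the reverse implication of item (3).

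Item (2) then follows by elimination together with Theorem \ref{am-injrad2}. If $\gamma$ is thin and almost minimizing, Theorem \ref{am-injrad2} gives recurrence, and Remark \ref{nondense-rem} — or more simply item (1), since an almost minimizing ray is certainly not "not almost minimizing" — gives that the horosphere is not dense; so it is recurrent and not dense. Conversely, suppose $W^{ss}(\dot\gamma(t))$ is recurrent and not dense. Not dense forces $\gamma$ almost minimizing by item (1). If $\gamma$ were additionally thick, item (3) would make the horosphere properly embedded, and a properly embedded horosphere cannot be recurrent (again by the definition of recurrence: properness means that outside a suitable compact set of the leaf nothing projects near $\dot\gamma_1(t)$). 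Hence $\gamma$ is thin. This closes item (2).

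The only genuinely non-mechanical point — and the step I expect to need the most care — is the logical bookkeeping that the three cases of Theorem \ref{omni} partition all geodesic rays: every ray is either not almost minimizing, or thin almost minimizing, or thick almost minimizing, and these are pairwise disjoint; combined with the three biconditionals this forces the horospherical behaviors to be mutually exclusive as well (in particular "recurrent but not dense", "dense", and "properly embedded" are incompatible pairwise, which one should check is consistent with the stated definitions — a dense horosphere is recurrent but excluded from (2) by the explicit "not dense", a properly embedded one is neither). I would also make explicit the one implication used twice above, that a recurrent horosphere is never properly embedded, since although it is asserted informally in the text, the clean deduction of items (2) and (3) as biconditionals relies on it; it is immediate from unwinding the definition of recurrence against the definition of proper embedding. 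Beyond that, the proof is a short citation-and-assembly argument, exactly as the sentence preceding the theorem ("Combining Theorem \ref{am}, Remark \ref{nondense-rem} and Theorems \ref{am-injrad1}, \ref{am-injrad2}") advertises.
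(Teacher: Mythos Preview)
Your proposal is correct and follows essentially the same approach as the paper, which proves Theorem \ref{omni} simply by the one-line assertion ``Combining Theorem \ref{am}, Remark \ref{nondense-rem} and Theorems \ref{am-injrad1}, \ref{am-injrad2}''. You have merely spelled out the logical bookkeeping (trichotomy of rays, recurrent $\Rightarrow$ not proper, $SM$ versus $M$) that the paper leaves implicit.
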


This leads us to the study of Questions \ref{maintt1} and \ref{maintt2} mentioned in the Introduction.
In the next two subsections, we use this result to furnish two sets of examples. In the first, all almost 
minimizing geodesics are thick and in the second, 
all almost 
minimizing geodesics are thin.
In section \ref{hll} we will address Question \ref{maintt1}. In the rest of the paper we will come back to Question \ref{maintt2} and address it in greater
 generality, relating it to different model geometries.
 
\subsection{Hyperbolic Dehn filling}
In the next two subsections, we will construct examples of thin manifolds by using the following version of Thurston's Hyperbolic Dehn Filling Theorem:

\begin{theorem}		\label{dehn-filling}
Let $M$ be a geometrically finite hyperbolic $3$-manifold whose convex core has totally geodesic boundary. Such 
a manifold $M$ is homeomorphic to the interior of a compact manifold $\hat M$. The rank $1$ cusps of $M$ correspond to a pants decomposition $R$ of the union $\partial_{\chi <0}$ of the components of $\partial \bar M$ with negative Euler characteristic. Let $T_0, ... , T_q$ be torus components of $\partial M$ and let $\bar M(p_0,...,p_q)$ be the manifold obtained by performing $(1,p_i)$ Dehn filling on $T_i$, $i=0, \cdots, q$.
 Then for $p_0,...,p_q$ large enough the interior of $\bar M(p_0,...,p_q)$ admits a unique geometrically finite hyperbolic metric with totally geodesic boundary $M(p_0,...,p_q)$ such that the rank $1$ cusps correspond to $R$. Furthermore $M(p_0,...,p_q)$ converges geometrically to $M$ when $(p_0,...,p_q)\longrightarrow (\infty,...,\infty)$.
\end{theorem}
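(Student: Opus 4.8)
The plan is to recognize this as a combination of two classical results: Thurston's Hyperbolic Dehn Surgery Theorem (in the form dealing with geometrically finite manifolds with totally geodesic boundary, where one only fills the torus cusps) together with the deformation theory of Kleinian groups controlling the rank-1 cusps. I would organize the argument as follows.

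First I would set up the geometric framework: by Thurston's work (or the Epstein--Petronio expository account, or Kojima's work on hyperbolic manifolds with geodesic boundary), a geometrically finite hyperbolic $3$-manifold $M$ whose convex core has totally geodesic boundary $\partial_{\chi<0}$ and whose cusps consist of rank-1 cusps along a pants curve system $R$ in $\partial_{\chi<0}$ together with rank-2 (torus) cusps $T_0,\dots,T_q$ is homeomorphic to the interior of a compact manifold $\hat M$. The doubling trick is convenient here: double $M$ across its totally geodesic boundary to obtain a cusped finite-volume (or geometrically finite) hyperbolic manifold $DM$, which carries an isometric involution. The rank-1 cusps of $M$ become rank-2 cusps of $DM$ fixed setwise by the involution, and the torus cusps $T_i$ double to pairs of torus cusps swapped by the involution. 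Then Thurston's Dehn surgery theorem applies to $DM$: for all sufficiently large surgery coefficients, Dehn filling produces a hyperbolic manifold close to $DM$ in the geometric topology. Performing the $(1,p_i)$ fillings on the $T_i$ symmetrically (and leaving the doubled rank-1 cusps unfilled), one may choose the filling to be equivariant under the involution, so the quotient is again a hyperbolic manifold with totally geodesic boundary — this is $M(p_0,\dots,p_q)$.

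Next I would address uniqueness and the cusp structure. Uniqueness of the geometrically finite hyperbolic metric with totally geodesic boundary and prescribed rank-1 cusps along $R$ follows from Mostow--Prasad rigidity for finite-volume pieces combined with the rigidity of geometrically finite ends with totally geodesic boundary — more precisely, such a structure on $\bar M(p_0,\dots,p_q)$ is a minimally degenerate geometrically finite structure, and among those with the same topological type and the same parabolic locus it is unique (this is where one invokes that the double is a finite-volume cusped manifold, rigid by Mostow--Prasad). That the rank-1 cusps of $M(p_0,\dots,p_q)$ still correspond to $R$ is built into the construction, since we never touched the curves of $R$ and the filling is supported away from a neighborhood of those cusps. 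Finally, geometric convergence $M(p_0,\dots,p_q)\to M$ as $(p_0,\dots,p_q)\to(\infty,\dots,\infty)$ is the geometric-limit conclusion of Thurston's theorem applied to $DM$, pushed down through the equivariant quotient: the equivariant pointed Gromov--Hausdorff convergence $DM(p)\to DM$ descends to $M(p)\to M$.

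The main obstacle, I expect, is not any single deep step but rather verifying carefully that the Dehn filling of the double can be performed $\mathbb{Z}/2$-equivariantly and that all of Thurston's estimates (the core geodesic being short, the thick part converging, the rigidity of the filled structure) survive passage to the quotient orbifold/manifold with boundary. In particular one must check that the totally geodesic boundary persists: this follows because the fixed-point set of the isometric involution on $DM(p)$ is a totally geodesic surface (the locus fixed by an orientation-reversing isometric reflection), and for $p$ large this fixed surface is isotopic to the original $\partial_{\chi<0}$ since the filling locus is disjoint from it. One should also confirm that ``$(1,p_i)$ filling'' is the correct normalization so that the filled torus cusp is genuinely eliminated (producing a solid torus glued in) rather than producing a new cusp or a cone singularity — this is just the standard requirement that the filling slope be a primitive class realized by a simple closed curve bounding a disk in the attached solid torus, which holds for $(1,p_i)$ in the chosen basis for all $p_i$.
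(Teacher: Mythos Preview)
Your proposal is correct and follows essentially the same approach as the paper: double across the totally geodesic boundary to obtain a finite-volume cusped manifold, apply Thurston's hyperbolic Dehn filling theorem to the double, and recover the desired manifold as the quotient by the natural involution, with uniqueness and geometric convergence coming along for free.

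The one place where the paper's argument is slightly cleaner than yours is in handling the involution. You propose to perform the Dehn filling $\mathbb{Z}/2$-equivariantly and then worry (in your ``main obstacle'' paragraph) about whether Thurston's construction and estimates can be made equivariant. The paper sidesteps this entirely: it simply fills the double $D\bar M(p_0,\dots,p_q)$ to get \emph{some} finite-volume hyperbolic structure, and then invokes Mostow--Prasad rigidity to conclude that the topological involution $\tau$ is isotopic to an isometry. The quotient by this isometry is then automatically hyperbolic with totally geodesic boundary. This avoids any need to check equivariance of the filling construction itself.
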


\begin{proof}
Let $D\bar M$ be the compact $3$-manifold obtained by gluing $2$ copies of $\bar M$ along $\partial_{\chi<0}\bar M$ and removing a regular neighborhood of $R\subset \partial_{\chi<0}\bar M$. The interior of $M$ admits a complete hyperbolic metric with finite volume obtained by gluing $2$ copies of the convex core of $M$ along their boundaries. Let $D\bar M(p_0,...,p_q)$ be the compact $3$-manifold obtained by gluing $2$ copies of $\bar M(p_0,...,p_q)$ along $\partial_{\chi<0}\bar M(p_0,...,p_q)$ and removing a regular neighborhood of $R\subset \partial_{\chi<0}\bar M(p_0,...,p_q)$. It follows from Thurston's Dehn Filling Theorem \cite[Theorem 5.8.2]{thurston-hypstr2} that for $p_0,...,p_q$ large enough the interior of $D\bar M(p_0,...,p_q)$ admits a hyperbolic metric with finite volume, which is unique up to isometries by Mostow-Prasad's Rigidity Theorem, let us denote by $DM(p_0,...,p_q)$ the resulting hyperbolic manifold. Again by  Mostow-Prasad's Rigidity Theorem the natural involution $\tau :D\bar M(p_0,.
..,p_q)\to D\bar M(p_0,...,p_q)$ which exchange the $2$ copies of $\bar M(p_0,...,p_q)$ is isotopic to an isometry. The quotient of $DM(p_0,...,p_q)$ by this isometry is the convex core of the desired hyperbolic manifold $M(p_0,...,p_q)$.

Still by Thurston's Dehn Filling Theorem (\cite[Chapter 5]{thurston-hypstr2}, see also \cite{petronio-porti}), $DM(p_0,...,p_q)$ converges geometrically to $DM$ when $(p_0,...,p_q)\longrightarrow (\infty,...,\infty)$ and $M(p_0,...,p_q)$ converges geometrically to $M$.
\end{proof}

Although we did not a find a statement containing the Theorem above in the literature, this could certainly be deduced from previous work such as \cite{bon-otal} or \cite{bromberg}.

\subsection{Example: A thin manifold  all of whose almost minimizing geodesics are thick}	\label{thick}

  The first example is due to Thurston \cite{thurston-hypstr2}
and Bonahon-Otal \cite{bon-otal}.
 We will detail a construction explained in \cite{thurston-hypstr2} and show that the result has the expected property. Later, when we describe i-bounded geometry as a model geometry of ends,
it will become clear that these examples form a special case. However, since the examples in this section
can be described in  a reasonably self-contained manner, we explicitly describe these below.

Let $S$ be a closed surface and $P,Q\subset S$ two pants decompositions that fill $S$, i.e. the connected component of $S \setminus (P\cup Q)$ are discs.. Consider a faithful and discrete representation $\pi_1(S)\to PSL(2,\C)$ whose convex core $C(P,Q)$ has totally geodesic boundary and cusps corresponding to $P$ on the bottom side and to $Q$ on the top side.

Let $\hat M_0$ be the manifold obtained by gluing $C(P,Q)$ on top of $C(Q,P)$ and let $\hat M_n$ be the manifold obtained by gluing $2n+1$ copies of $\hat M_0$ on top of each other. Pick a base point $x_n$ in the middle piece of $\hat M_n$. By construction, $\hat M_i$ isometrically embeds in $\hat M_j$ for any $j>i$. It follows that the sequence $(\hat M_n,x_n)$ converges geometrically to a hyperbolic $3$-manifold $\hat M_\infty$.

Next we fill the holes of $\hat M_\infty$ recursively. Let $M_0(p_0)$ be the manifold obtained by performing $(1,p_0)$ Dehn fillings on the torus cusps of $M_0$ (c.f.\ Theorem \ref{dehn-filling}). Given $M_n(p_0,...,p_n)$, glue a copy of $\hat M_0$ at the top and one at the bottom to obtain a new convex hyperbolic $3$-manifold $\hat M_{n+1}(p_0,...,p_n)$. Denote then by $M_{n+1}(p_0,...,p_{n+1})$ the manifold obtained by performing $(1,p_{n+1})$
 Dehn fillings on the torus cusps of $\hat M_{n+1}(p_0,...,p_i)$ (c.f.\ Theorem \ref{dehn-filling}). Also denote by $\hat M_\infty(p_0,...,p_n)$ the manifold obtained by gluing $\hat M_\infty-\hat M_n$ along the boundary of $M_n(p_0,...,p_n)$ (or equivalently perform $(1,p_i)$ Dehn fillings on the appropriate cusps of $\hat M_\infty$).

If we fix $n$ and $p_0,...,p_{n-1}$ and let $p_n$ go to $\infty$, by Theorem \ref{dehn-filling}, $M_n(p_0,...,p_n)$ converges geometrically to 
$\hat M_n(p_0,...,p_{n-1})$.
 It follows that for $p_n$ large enough (depending on $\eps$) there is a homeomorphism $f_n$ between the $\eps$-thick parts of  $\hat M_\infty(p_0,...,p_n)$ and $\hat M_\infty(p_0,...,p_{n-1})$ whose restriction to $\hat M_\infty-\hat M_n$ is an isometry and restriction to  $M_n(p_0,...,p_n)$ is  $K_n(p_n)$-bilipschitz, with $K_n(p_n)$ close to $1$ when $p_n$ is large. This also implies that the sum $l_n(p_n)$ of the lengths  of the added geodesics is short when $p_n$ is large.

Pick a small $\eps$ and choose the sequence $\{p_n\}$ so that $\Pi_n K_n(p_n)$ converges and that $l_n(p_n)\longrightarrow 0$. The map $g_n=f_1\circ...\circ f_n: \hat M_\infty(p_0,...,p_n)\to \hat M_\infty$ is bilipschitz on the $\eps$-thick part. It follows that $(M_n(p_0,...,p_n), x_n)$ and $(\hat M_\infty(p_0,...,p_n),x_n)$ converge to a hyperbolic manifold $M_\infty$ homeomorphic to $S\times\R$ and that $g_n$ converge to a bilipschitz map $g_\infty$ from the $\eps$-thin part of $M_\infty$ to the $\eps$-thin part of $\hat M_\infty$. Also since $p_n \to \infty$, the injectivity radius of $M_\infty$
has no positive lower bound. 

By construction, each cusp of $\hat M_\infty$ is isometric to a cusp of $\hat M_1$. In particular the components of the boundary of the $\eps$-thick part of $\hat M_\infty$ have uniformly bounded diameter. Then the map $g_\infty$ provides us with an upper bound $D$ on the diameters of the components of the boundary of the $\eps$-thick part of $M_\infty$. Let $\kappa$ be an arc in $M_\infty$ with endpoints in the $\eps$-thick part. If $\kappa$ goes through the $\eps_0$ thin part, it has a subsegment of length $l(\eps,\eps_0)$ in the $\eps$-thin part with $l(\eps,\eps_0)\longrightarrow\infty$ when $\eps_0$ tends to $0$. Hence $\kappa$ is not $(l(\eps,\eps_0)-D-1)$-minimizing.

We conclude that a geodesic that goes arbitrarily deep in the thin part of $M_\infty$ is not almost minimizing.
We shall generalize this example considerably in Section \ref{ibdd}.

\subsection{Example: A thin manifold  all of whose almost minimizing geodesics are thin}	\label{thin}
For the second example, we will follow the same procedure but the pieces we will glue will be different. Let $c\subset S$ be a non separating curve, $\phi:S-c\to S-c$ a pseudo-Anosov diffeomorphism and let $P$ be a pants decomposition that crosses $c$. We will use $C(P,\phi^j(P))$ with larger and larger $j$ instead of $C(P,Q)$. These pieces have the following property:

\begin{lemma}		\label{thin0}
Given $\eps,C$, there is $J=J(\eps,C,\phi,P)$ such that for any $j\geq J$ any $C$-almost minimizing segment joining the top boundary of $C(P,\phi^j(P))$ to its bottom boundary goes through the $\eps$-thin part.
\end{lemma}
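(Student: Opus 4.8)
The plan is to use that, since $\phi$ is pseudo-Anosov on $W:=S\setminus c$, the curve $c$ becomes arbitrarily short in the block $M_j:=C(P,\phi^j(P))$ as $j\to\infty$: its Margulis tube then furnishes essentially the only short passage between the two geodesic boundary surfaces of $M_j$, so any top--bottom segment that refuses to enter the $\eps$-thin part is forced along the ``long'' product region $W\times I$ and is therefore very far from minimizing.

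\emph{Step 1 ($c$ gets short, but not too fast).} Because $\phi(c)=c$ one has $\pi_W(\phi^j(P))=\phi^j\pi_W(P)$, and since $\phi$ acts loxodromically on the curve complex $\CC(W)$ this gives $d_W(P,\phi^j(P))\asymp j\to\infty$. On the other hand $i(c,P)=i(c,\phi^j(P))$ is a fixed constant, and the $\phi$-axis in $\CC(W)$ is periodic, so it projects to a closed (hence thick) geodesic in $\mathrm{Teich}(W)$; by Rafi's thick-geodesic criterion the subsurface projections of the invariant laminations of $\phi$ are uniformly bounded, and it follows that $d_Y(P,\phi^j(P))$ is bounded independently of $j$ and of $Y$, except for $Y=W$ and $Y$ the annulus about $c$, where it is $\asymp j$. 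Feeding this into Minsky's bilipschitz model for the geometrically finite manifold $\widehat N_j\cong S\times\reals$ with convex core $M_j$, I get that $c$ is the unique non-peripheral geodesic whose length goes to $0$, with $\ell_{M_j}(c)\asymp j^{-2}$; in particular $\log\bigl(1/\ell_{M_j}(c)\bigr)=O(\log j)$. Thus for $j$ large the $\eps$-thin part of $M_j$ is the union of the standard cusp neighbourhoods of $P\cup\phi^j(P)$ and a single Margulis tube $\tube_\eps(c)$, whose core has length $\asymp j^{-2}$ and whose radius is $R_j\asymp\log j$.

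\emph{Step 2 (two distance bounds).} The same control on the coefficients, via Minsky's model, puts on $M_j$ a product structure in which the sub-block carrying the $\CC(W)$-geodesic from $\pi_W(P)$ to $\phi^j\pi_W(P)$ fills a slab of model width $\asymp j$, has uniformly bounded geometry away from cusps, lies at bounded distance from the two geodesic boundaries, and has $\tube_\eps(c)$ running alongside it. First, any top--bottom path in the $\eps$-thick part of $M_j$ must cross this slab, and since it avoids $\tube_\eps(c)$ it stays either in the bounded-geometry part $W\times I$ of the slab or in the $O(1)$-thick shell around $c$ between radius $R_\eps\asymp\log j$ and $R_j$; because the ``angular width'' of the slab along the tube is $\asymp j/\sinh R_j$, crossing it at any admissible radius $r\geq R_\eps$ costs $\gtrsim j$, so such a path has length $\geq c_1 j-O(1)$ for some $c_1=c_1(\eps)>0$. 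Second, one \emph{can} join the $\eps$-thick part of the top boundary to that of the bottom boundary by plunging into $\tube_\eps(c)$ near one end, running radially in a depth $\asymp R_j$, sliding negligibly to the other end, and running back out; together with the fact that the thick parts of the two boundary surfaces have $d_{M_j}$-diameter bounded independently of $j$ (they converge as $j\to\infty$), this gives $d_{M_j}(x,y)=O(\log j)$ for all $x$ on the top and $y$ on the bottom boundary that lie in the $\eps$-thick part.

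\emph{Step 3 (conclusion) and the main difficulty.} Let $\sigma\colon[0,T]\to M_j$ be a unit-speed $C$-almost-minimizing segment with $T=\length(\sigma)$, from the top boundary of $M_j$ to the bottom boundary, and suppose it avoids the $\eps$-thin part. Then $\sigma(0),\sigma(T)$ lie in the $\eps$-thick parts of the boundary surfaces, so $T\leq d_{M_j}(\sigma(0),\sigma(T))+C\leq O(\log j)+C$ by Step 2, while $T\geq c_1 j-O(1)$; these are incompatible once $j\geq J=J(\eps,C,\phi,P)$, so for $j\geq J$ every such segment meets the $\eps$-thin part. The hard part is the model-geometry bookkeeping in Step 2: establishing precisely, via Minsky's model and the Masur--Minsky distance formula (and tracking how the model metric is glued along the Margulis tube), both that the slab carrying the $W$-geodesic has metric width comparable to $d_W(P,\phi^j(P))\asymp j$ and cannot be short-circuited once the deep tube is removed, and that $\ell_{M_j}(c)$ decays no faster than polynomially in $j$ --- the latter, which is what lets the $O(\log j)$ shortcut of Step 2 beat the $\asymp j$ lower bound, ultimately resting on the uniform bound on the subsurface coefficients of $(P,\phi^j(P))$ coming from the periodicity of the pseudo-Anosov axis.
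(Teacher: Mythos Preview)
Your argument is correct in outline and shares the paper's core strategy: any top--bottom path confined to the $\eps$-thick part has length growing linearly in $j$, while there is a competing path of length $O(\log j)$, so for large $j$ no thick path can be $C$-almost minimizing. The implementations, however, are genuinely different. You work \emph{inside} $C(P,\phi^j(P))$ and invoke the full Minsky bilipschitz model together with the Masur--Minsky projection machinery to (a) bound all proper subsurface coefficients via the periodicity of the pseudo-Anosov axis, (b) read off that $\ell_{M_j}(c)$ decays only polynomially (your exponent $j^{-2}$ is slightly off --- the length formula gives $\asymp j^{-1}$ --- but only the polynomial rate matters), and (c) estimate the slab width and tube radius directly. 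The paper instead passes to a \emph{limit}: it shows, using the double limit theorem, the Ending Lamination Theorem, and the Covering Theorem, that as $j\to\infty$ a reparametrized $C(P,\phi^j(P))$ converges geometrically to the infinite cyclic cover $\bar M_\phi$ of the $\phi$-mapping torus, where $c$ is a genuine cusp. In $\bar M_\phi$ the two estimates are elementary (horoball geometry gives the $\log k$ shortcut; crossing $k$ fundamental domains in the thick part costs $\asymp k$), and a near-isometric embedding of $k$ thick fundamental domains back into $C(P,\phi^j(P))$ transports both estimates. Your route is more self-contained but leans on heavier quantitative machinery and requires the careful model-geometry bookkeeping you flag in Step~2; the paper's route trades that for softer convergence arguments and the black-box use of the Ending Lamination Theorem to identify the limit.
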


\begin{proof}
Let $M_\phi$ be the hyperbolic manifold homeomorphic to $(S-c)\times [0,1]/(x,0)\sim (\phi(x),1)$ and let $\bar M_\phi$ be its cyclic cover (homeomorphic to $(S-c)\times (0,1))$). Basic hyperbolic geometry tells us that the distance between $2$ points on a horoball grows logarithmically with their distance on the horosphere. Applied to $\bar M_\phi$ this produces the following Claim:

\begin{claim}	\label{horo}
Pick a fundamental domain $D$ for the action of $\Z$ on $\bar M_{\phi}$. Given $\eps$, we denote by $D^k_\eps$ the union of $k$ adjacent copies of the $\eps$-thick part of $D$ in $\bar M_\phi$. Let $x\subset \bar M_\phi$ be a point at the top of $D^k_\eps$ and $y\subset \hat M_\phi$ be a point at the bottom of $D^k_\eps$, then for $k$ large enough $d_{\hat M_\phi}(x,y)\leq 2\log k$.\hfill $\Box$
\end{claim}

We will transport this property into $C(P,\phi^j(P))$ by showing that for $j$ large enough, a large part of $C(P,\phi^j(P))$ looks like $D^k_\eps$.

\begin{claim}	\label{emb}
Given $\eps, k$, there is $I=I(\eps,k,\phi,P)$ such that for $j\geq I$, there is a $1+\eps$-bilipschitz embedding of $D^k_\eps$ in $C(P,\phi^j(P))$. 
\end{claim}

\begin{proof}
Set $j=2i$ if $j$ is even and $j=2i+1$ otherwise. Let $\rho_i:\pi_1(S)\to PSL(2,\C)$ be a discrete and faithful representation with cusps corresponding to $\phi^{-i}(P)$ at the bottom and cusps corresponding to $\phi^i(P)$ at the top if $j$ is even and to $\phi^{i+1}(P)$ if $j$ is odd. Notice that the convex core of $\bH^3/\rho_i(\pi_1(S))$ is isometric to $C(P,\phi^j(P))$. Consider the restriction $\rho_{c,i}:\pi_1(S-c)\to PSL(2,\C)$ of $\rho_i$ to $\pi_1(S-c)$. By \cite{minsky-kleinian}, the length of the geodesic corresponding to $c$ in $\bH^3/\rho_i(\pi_1(S))$ tends to $0$ when $i$ tends to $\infty$. It follows from 
a generalization of the Double Limit Theorem (\cite{thurston-hypstr2}, see also \cite{canary-schottky}) that a subsequence of $\rho_{c,j}$ converges to a representation $\rho_\infty$. Since its length goes to $0$, $c$ is a parabolic for $\rho_\infty$. By \cite{brock-conti}, the stable and unstable laminations of $\phi$ are not realized in $\bH^3/\rho_\infty$. It follows from the Ending Lamination Theorem (\cite{minsky-elc2}) that $\bH^3/\rho_\infty(\pi_1(S-c))$ is isometric to $\bar M_\phi$. Up to extracting a further subsequence, $\bH^3/\rho_n(\pi_1(S-c))$ converges geometrically as well. By the Covering Theorem \cite{canary-cover}, $\bar M_\phi$ is also the geometric limit. The conclusion follows.
\end{proof}

Combining Claims \ref{horo} and  \ref{emb}, we see that distances grow linearly with $k$ in the thick part while they grow logarithmically in the thin part. Now we just need to adjust the constants to prove Lemma \ref{thin0}.

Fix $\eps$, $C$, $j$ and $k$ large enough so that Claim \ref{horo} holds for $\frac{\eps}{2}$. Set $\eps_k$ such that a geodesic segment in $\bar M_\phi$ joining $2$ points in $D^k_{\frac{\eps}{2}}$ does not enter the $\eps_k$ thin part. Let $d$ be the distance between the $2$ boundary components of $D$ in the $\frac{\eps}{2}$-thick part. Let $\kappa$ be a segment in the $\eps$-thick part joining the top boundary of $C(P,\phi^j(P))$ to its bottom boundary.

By Claim \ref{emb}, if  $j\geq J(k,\eps_k)$, there is a $2$-bilipschitz embedding $f:D^k_{\eps_k}\hookrightarrow C(P,\phi^j(P))$. The preimage $\eta$ of $\kappa\cap f(D^k_{\eps_k})$ joins the top of $D^k$ to its bottom and lies in the $\frac{\eps}{2}$-thick part. It follows that $\eta$ has length at least $kd$ and that $\kappa\cap f(D^k_{\eps_k})$ has length at least $k\frac{d}{2}$. By Claim \ref{horo}, the endpoints of $\eta$ are joined by an arc $\eta'\subset \hat M_\phi$ with length at most $2\log k$. Furthermore, by the choice of $\eps_k$, $\eta'\subset D^k_{\eps_k}$. Thus $f(\eta')$ is an arc with length at most $4\log k$ joining the endpoint of $\kappa\cap f(D^k_{\eps_k})$. Now we can conclude that if $k\frac{d}{2}> 4\log k+C$ and $j\geq J(k,\eps_k)$, $\kappa$ is not $C$-almost minimizing.
\end{proof}

The second example is now
constructed with the same steps as the first one. Let $M(j)$ be the manifold obtained by gluing $C(\phi^j(P),P)$ on top of $C(P,\phi^j(P))$. Define $M_1=M(1)$ and define $M_{n+1}$ recursively by gluing a copy of $M(n+1)$ at the top of $M_n$ and one at the bottom. Pick a basepoint $x_n$ in the middle piece of $M_n$. It is easy to see that $(M_n,x_n)$ converges geometrically to a hyperbolic $3$-manifold $\hat M_\infty$ with infinitely many rank $2$ cusps. It is easy to deduce from Lemma \ref{thin0} that any almost minimizing geodesic in $\hat M_\infty$ goes arbitrarily far into the thin part.

Now the manifold $M_\infty$ is obtained as in the first example by recursively performing $(1,p_i)$-Dehn filling on the cusps of $\hat M_i$, with an appropriate choice of $p_i$ so that everything converges and that the geometry is close to the geometry of $\hat M_\infty$. Then by Lemma \ref{thin0}, any almost minimizing geodesic in $M_\infty$ goes arbitrarily far into the thin part.

\section{The horospherical limit set}\label{hll}

In this section, we study the interrelationships between three subsets of the limit set:

\begin{itemize}
\item The conical limit set $\Lambda_c$.
\item The multiple limit set $\Lambda_m = \{ x \in \Lambda : \# |(\partial\hat{i})^{-1}(x)| > 1\}$,
where $\partial\hat{i}$ denotes the Cannon-Thurston map (defined below).
\item The horospherical limit set $\Lambda_H$.
\end{itemize}

Before recalling the definitions of these sets, let us consider the topology and geometry of hyperbolic 3-manifolds.

\subsection{Ends of hyperbolic 3-manifold}		\label{ends}
We
have mentioned earlier that, for our purposes,
a hyperbolic 3-manifold is a quotient $M=\bH^3/G$ where $G$ is a finitely generated Kleinian group. It follows from the Tameness Theorem (\cite{agol-tameness}, \cite{calegari-gabai}) that $M$ is {\bf tame}, i.e.\ homeomorphic to the interior of a compact 3-manifold $\bar M$. 

A {\bf compact core} $C\subset M$ for $M$ is a compact submanifold such that the inclusion $C\hookrightarrow M$ induces an isomorphism on fundamental groups. The existence of compact cores for 3-manifolds with finitely generated fundamental groups is a central result in the study of $3$-manifolds due to Scott (\cite{scott-cc}). Since $M$ is tame, there is a compact core for $M$ homeomorphic to $\bar M$ so that each component of $M-C$ is homeomorphic to $S\times \reals$ where $S$ is a component of $\partial\bar M=\partial C$. We call such a component (or its closure) an {\bf end} of $M$. Although this definition depends on the choice of $C$, it is easy to see that given a compact set $K\subset M$, we can choose $C$ so that $K\subset C$. Thus the asymptotic behavior of the ends of $M$ does
 not depend on the choice of $C$.

Let $G$ be a Kleinian group. Its limit set $\Lambda_G$ is the closure in the boundary at infinity $\partial\bH^3$ of the orbit of a base point. More precisely, fix a base point $O\subset\bH^3$ and set $GO=\{gO, g\in G\}$, then $\Lambda_G=\bbar{GO}\cap\partial\bH^3$. The convex core of $M=\bH^3/G$ is the quotient ${\rm Hull}(\Lambda_G)/G$ of the convex hull in $\bH^3$ of the limit set. Equivalently it is the smallest convex subset of $M$ whose inclusion
induces a homotopy equivalence with $M$.

Let us now assume that $G$ is finitely generated, has no parabolic elements and is not a lattice (i.e.\ $M=\bH^3/G$ has infinite volume). An end of $M$ is {\bf degenerate} if it lies in its convex core. The manifold $M$ is degenerate if all its end are degenerate, equivalently its convex core is the whole manifold, equivalently $\Lambda_G=\partial\bH^3$.

Work of
Thurston, Bonahon and Canary (\cite{thurstonnotes}, \cite{bonahon-bouts} and \cite{canary})
along with tameness \cite{agol-tameness, calegari-gabai}
 shows that degenerate ends are geometrically tame, i.e.\ there is a sequence of hyperbolic surfaces leaving every compact set:

\begin{theorem}
Let $E\approx S\times \reals$ be a degenerate end of a tame hyperbolic $3$-manifold, then there is a sequence of maps $f_n:S\to E$ such that 
\begin{enumerate}
	\item $f_n$ is homotopic to the map induced by the inclusion $S\times\{1\}\subset S\times \reals$,
	\item the metric induced on $S$ by $f_n$ is hyperbolic
	\item $f_n(S)\subset S\times [n,\infty)$ for any $n\in\natls$.
\end{enumerate}
 
\end{theorem}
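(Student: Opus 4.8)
The plan is to re-derive this from the Tameness Theorem together with the pleated-surface machinery of Thurston and Bonahon, i.e.\ essentially to reprove that a topologically tame degenerate end is geometrically tame (\cite{bonahon-bouts}, \cite{canary}).

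\emph{Topological normalisation.} By the Tameness Theorem (\cite{agol-tameness}, \cite{calegari-gabai}) $M$ is the interior of a compact manifold, and, as recalled above, the compact core $C$ may be chosen so that $\bar E$ is a collar $S\times[0,\infty)$ of the boundary component $S=\partial C\cap\bar E$ (this is the region written $S\times\reals$ in the statement, with $S\times\{1\}$ the side facing $C$). Since $M$ has no parabolics, $\pi_1(S)\hookrightarrow\pi_1(M)$; since $E$ lies in the convex core, $E$ is not geometrically finite; and since $M$ is aspherical, a map $S\to M$ inducing (up to conjugacy) the inclusion $\pi_1(S)\hookrightarrow\pi_1(M)$ is unique up to homotopy, so any pleated surface inducing that injection automatically satisfies $(1)$. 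For each essential simple closed curve $c$ on $S$, $c$ is realisable (no cusps), hence there is a pleated surface $h_c\colon(S,\sigma_c)\to M$ in the homotopy class of the inclusion mapping $c$ onto its $M$-geodesic representative $c^{*}$; as $\sigma_c$ is a hyperbolic metric on $S$ by definition of a pleated surface, $(2)$ also holds for each $h_c$. (If one wishes to permit compressible $S$ one replaces pleated by simplicial hyperbolic surfaces à la Canary, at the cost of a singular hyperbolic metric.)

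\emph{An exiting sequence of surfaces.} Here I would invoke the Bonahon/Canary dichotomy: a topologically tame end is geometrically finite, or else carries a sequence $c_k$ of essential simple closed curves on $S$, each homotopic into $E$, whose geodesic representatives $c_k^{*}$ leave every compact subset of $M$; were there no such sequence, interpolating pleated surfaces through a bounded family of realising curves would trap the end in a bounded neighbourhood of $C$, forcing geometric finiteness. Since $E$ is degenerate we obtain such $c_k$, and we realise each by a pleated surface $h_k:=h_{c_k}$ as above. It remains to promote ``$c_k^{*}$ exits'' to ``$h_k(S)$ exits'': $h_k(S)$ is connected with $\operatorname{area}=2\pi|\chi(S)|$ by Gauss--Bonnet, its intersection with the $\eps_0$-thick part of $M$ (Margulis constant) consists of pieces of diameter $\le D_0(\chi(S),\eps_0)$, and -- $M$ having no cusps -- the thin part of $M$ is a union of Margulis tubes which $h_k(S)$ enters in a controlled way; consequently if $h_k(S)$ met a fixed compact $C'\supseteq C$ while containing a point of $c_k^{*}$ at distance $R$ from $C'$, it would contain a path of length $\ge R-O(1)$ built from such pieces and hence have area growing linearly in $R$, contradicting Gauss--Bonnet once $R$ is large. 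Hence $h_k(S)$, being connected, disjoint from $C'$, and in the homotopy class of the inclusion of $S$ into $\bar E$, lies in the collar $\bar E$ -- indeed in $S\times[m_k,\infty)$ with $m_k\to\infty$.

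\emph{Conclusion.} For each $n$ pick $k(n)$ with $m_{k(n)}\ge n$ and set $f_n:=h_{k(n)}$; then $(1)$, $(2)$, $(3)$ all hold. The main obstacle is exactly the middle step -- both the existence of an exiting sequence of realisable curves and the passage from exiting curves to exiting pleated surfaces -- which is the technical heart of the cited work of Thurston, Bonahon and Canary; the topological normalisation, Gauss--Bonnet, and the reindexing are routine. (As a by-product one may note that the $c_k$ converge in $\PML(S)$ to a filling arational lamination, the ending lamination of $E$, realised by the limiting pleated surfaces; this is the starting point for the model-geometry discussion later in the paper.)
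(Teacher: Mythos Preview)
The paper does not give its own proof of this theorem. It is stated as a consequence of the cited literature: ``Work of Thurston, Bonahon and Canary (\cite{thurstonnotes}, \cite{bonahon-bouts} and \cite{canary}) along with tameness \cite{agol-tameness, calegari-gabai} shows that degenerate ends are geometrically tame,'' and the theorem is then asserted without argument. So there is nothing to compare your proposal \emph{against}; you are supplying a sketch that the paper deliberately omits, and you are drawing on exactly the same sources the paper invokes.

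Your outline is the standard one and is essentially correct as a roadmap: tameness gives the product structure, the Bonahon/Canary dichotomy produces an exiting sequence of realisable simple closed curves, pleated surfaces realise them and satisfy (1) and (2), and a bounded-diameter argument promotes ``curves exit'' to ``surfaces exit''. You are also right to flag the middle step as the real content. One caution on your diameter argument: the ``area grows linearly in $R$'' phrasing is not quite how the bounded-diameter lemma works --- a pleated surface can have large diameter by plunging into a Margulis tube without accruing much area. The correct statement (Thurston, Bonahon) is that the image of a pleated surface has bounded diameter \emph{relative to the thick part} of $M$, and one then argues separately that the Margulis tubes the surface enters are themselves far from the core once $c_k^{*}$ is. This is a presentational wrinkle rather than a genuine error, and since you explicitly defer the details to \cite{thurstonnotes, bonahon-bouts, canary} it does not undermine the proposal.
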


This lead to the the definition of an ending lamination associated to a degenerate end. Consider a sequence of simple closed curves $c_n\subset S$ such that $\ell_{f_n}(c_n)$ is a bounded sequence, where $\ell_{f_n}$ is the length associated to the metric induced on $S$ by $f_n$. Extract a subsequence such that $c_n$ converges to a projective measured geodesic lamination $\lambda$ on $S$ (see \cite[Chap. 8]{thurstonnotes} and \cite{casson-bleiler} for definitions and properties of measured geodesic laminations). It follows from \cite{bonahon-bouts} that the geodesic lamination $\nu$ supporting $\lambda$ does not depend on the choices of $\{f_n\}$ or $\{c_n\}$, $\nu$ is the {\bf ending lamination of $E$}.

Notice that the induced metric on $\partial E$ is not hyperbolic. We will need to define geodesic laminations on $\partial E$. For this purpose, we fix a reference hyperbolic metric on $S$. Then we have a bilipschitz homeomorphism between $S$ and $\partial E$ endowed with the induced metric and a geodesic lamination on $\partial E$ is simply defined as the image of a geodesic lamination on $S$. 

\subsection{Cannon-Thurston maps, Exiting geodesics and limit set}

\begin{defn} A geodesic ray in $\til M$ 
is {\bf exiting} if it is a lift of an exiting geodesic ray in $M$, i.e.\ a lift of a geodesic ray
that is properly embedded in an end of $M$.
Let $E$ be an end of $M$ with $S=\partial E$. A geodesic  ray $\gamma$ in $M$ is {\bf exiting in $E$} if $\gamma$ is properly embedded and $\exists T$ such that $\gamma([T,\infty))\subset E$. Notice that exiting geodesic must be exiting in some end.
A {\bf minimizing geodesic segment} $\gamma$ through $p \in E$ is a geodesic segment 
between some $o \in S$ and $p$ with length equal to
$d_M (S,p)$. A lift of a minimizing geodesic segment to $\til M$ is called a minimizing geodesic segment in $\til M$.
\end{defn}

\begin{defn}   Let $X$ and $Y$ be hyperbolic metric spaces and
$i : Y \rightarrow X$ be an embedding. Suppose that a continuous extension $\hat{i}:
\widehat{Y} \to \widehat{X} $ of $i$ exists between their (Gromov) compactifications.
Then the boundary value of $\hat i$, namely
 $\partial\hat i : \partial Y \to \partial X$ is called
the {\bf Cannon-Thurston map}.
\end{defn}

Sometimes, in the literature \cite{mitra-trees, mitra-ct}, $\hat i$ is itself called a Cannon-Thurston map.
For us, $Y$ will be a Cayley graph of $\Ga$. We will be particularly interested in the case
that $\Ga$ is a surface Kleinian group isomorphic to $\pi_1(S)$ 
(for $S$ a closed surface of genus $g>1$). Also $X$ will be $\Hyp^3$, where we identify (the
vertex set of) $Y$ with an orbit of $\pi_1(S)$ in $\Hyp^3$. Equivalently (as is often done in
geometric group theory) we can identify $Y$ with $\Hyp^2
= \til{S}$, $X$ with $\Hyp^3$, and $i$ with the lift to universal covers
of the inclusion of $S$ into an end $E$ of $M$.
Then the main Theorems of \cite{mahan-split, mahan-red, mahan-elct, mahan-kl} gives us:

\begin{theorem} Let $S$ be the boundary of a degenerate  incompressible end $E$.
A Cannon-Thurston map $\partial i$ exists for $i: \til{ S} \to \til{E}$.
 Let $\LL_E$ denote the ending lamination corresponding to $E$.
Then $\partial i$ identifies $a, b \in \partial \til{ S}$ 
 iff $a, b$ are  end-points of a leaf of $\LL_E$ or boundary points of an ideal polygon whose sides are leaves
of $\LL_E$.

More generally for a degenerate $M$ without parabolics, let $K$ denote a compact core. Identify the
Gromov boundary $\partial
\til{K}$ with $\partial \Gamma$. Then a 
Cannon-Thurston map $\partial i$ exists for $i: \til{ K} \to \til{M}$. Also, 
$\partial i$ identifies $a, b \in \partial \til{K}$ 
 iff $a, b$ are  end-points of a leaf of $\LL_E$ or boundary points of an ideal polygon whose sides are leaves
of $\LL_E$ for some (lift of) an ending lamination $\LL_E$ corresponding to an end $E$.
 \label{ctstr}
\end{theorem}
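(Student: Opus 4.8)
\textbf{Proof proposal for Theorem \ref{ctstr}.}

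The plan is to reduce the general statement (for a degenerate $M$ without parabolics) to the incompressible-end case, and to prove the latter by invoking the existence and structure results of \cite{mahan-split, mahan-kl}. First I would address the incompressible end $E$ with $S=\partial E$. Here the existence of the Cannon-Thurston map $\partial i:\partial\til S\to\partial\til E=\bS^2$ is exactly the main theorem of \cite{mahan-split} (in the split-geometry model, which covers every degenerate end) — no new argument is needed, only a citation. The content to verify is the identification structure: $\partial i(a)=\partial i(b)$ with $a\neq b$ iff $a,b$ are endpoints of a leaf of $\LL_E$, or endpoints of a complementary ideal polygon of $\LL_E$. The $\Leftarrow$ direction: if $a,b$ are endpoints of a leaf $\ell$ of $\LL_E$, then realizing a sequence of simple closed curves $c_n\to\LL_E$ via the pleated-surface/geometrically-tame description of $E$ (Theorem in \textsection\ref{ends}), the geodesic in $\til E$ joining $\til a$ to $\til b$ projects into arbitrarily deep parts of $E$ while the curves $c_n$ have bounded length, forcing the two ideal points to map to the same point of $\bS^2$; the ideal-polygon case follows since all sides of such a polygon are leaves and hence all their endpoints collapse together. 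The $\Rightarrow$ direction is the substantive half: one shows that if $a,b$ are \emph{not} so related, then the geodesic $[\til a,\til b]$ stays in a bounded neighborhood of a lift of a pleated surface (or, in split geometry, projects to a set that is coarsely contained in finitely many split blocks), so its two ends converge to distinct points of $\bS^2$. This is precisely the injectivity-modulo-ending-lamination statement established in \cite{mahan-split, mahan-kl, mahan-elct}, and I would cite it rather than reprove it.

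Next I would handle the general degenerate $M$ without parabolics, with compact core $K$ and $\partial\til K\cong\partial\Gamma$. The existence of a Cannon-Thurston map $\partial i:\partial\til K\to\bS^2$ for $i:\til K\to\til M$ is the main theorem of \cite{mahan-kl} (which removes the incompressibility hypothesis). For the identification structure, the idea is that $\til M$ is built, up to quasi-isometry, by gluing the universal covers $\til E_j$ of the finitely many ends $E_j$ of $M$ along $\til K$, and that the tree-like decomposition of $\til M$ over $\til K$ together with the hyperbolicity of $\Gamma$ forces any collapsing to occur ``inside a single end.'' Concretely: if $\partial i$ identifies $a\neq b$, then a geodesic $[\til a,\til b]$ in $\til M$ must fail to be properly embedded, hence (using that $K$ is compact and each $\til E_j$ is ``one-ended'' in the appropriate coarse sense) it is trapped, for all large time, inside a single lift $\til E_j$; applying the incompressible-end case — or rather its proof, which works verbatim for the lift of an end inside $\til M$ — gives that $a,b$ are endpoints of a leaf, or of a complementary ideal polygon, of the ending lamination $\LL_{E_j}$ (pulled back to $\partial\til K$ via the inclusion $\til S_j\hookrightarrow\til K$). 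The converse direction is as before, one end at a time.

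The main obstacle I anticipate is the $\Rightarrow$ direction in the general (compressible) case: making precise the claim that a non-proper bi-infinite geodesic in $\til M$ is eventually confined to a single lifted end, and that its behavior there is governed by that end's ending lamination as seen inside $\partial\til K$ (rather than inside $\partial\til S_j$, which need not inject). One must track how the surface group $\pi_1(S_j)$ sits inside $\Gamma$ — it may be a proper subgroup with its own distinct ending data on the two sides of a compressible core — and argue that the Cannon-Thurston map for $\til K\to\til M$ restricted to $\partial(\pi_1(S_j))$ factors through the Cannon-Thurston map for that end. This kind of ``CT maps compose/restrict well'' statement is available in \cite{mitra-trees, mahan-kl}, so I expect the proof to consist mainly of assembling these pieces and checking the compatibility of the various boundary identifications, rather than any genuinely new geometric estimate. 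The remaining steps — the $\Leftarrow$ direction and the existence statements — are direct citations.
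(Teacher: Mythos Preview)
The paper does not prove Theorem \ref{ctstr}; it is stated without proof as a direct consequence of the main theorems of \cite{mahan-split, mahan-red, mahan-elct, mahan-kl} (see the sentence immediately preceding the theorem). Your proposal correctly recognizes this and goes further by outlining how those cited proofs actually run; that outline is accurate in its broad strokes, but strictly speaking it exceeds what the paper itself does, which is pure citation.
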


Given Theorem \ref{ctstr} we define

\begin{defn} The {\bf multiple limit set}
 $\Lambda_m = \{ x \in \Lambda : \# |(\partial i)^{-1}(x)| > 1\}$. Equivalently

\begin{center}

$\Lambda_m = \{ \partial i (y) : y$ is an end-point of a leaf of   $\LL_E$ for some ending lamination $\LL_E \}$

\end{center}

\end{defn}

An infinite geodesic ray $[o,x) \subset \til{M}$ (where $x \in \partial {\til{M}}$) is said to {\bf land} at $x$.

A consequence of the construction in Section 4.2 of \cite{mahan-elct} is:

\begin{prop} \label{multexits} If $x \in \Lambda_m$, then $[o,x)$ is exiting.\end{prop}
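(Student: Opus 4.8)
The plan is to unwind the description of the Cannon--Thurston map furnished by the construction in \cite[Section 4.2]{mahan-elct} and feed its output into a soft hyperbolic-geometry argument. By Theorem \ref{ctstr}, the hypothesis $x\in\Lambda_m$ means that $x=\partial i(y)=\partial i(y')$ for two distinct points $y\ne y'\in\partial\til K$ which are the endpoints of a leaf $\ell$ of an ending lamination $\LL_E$ of some degenerate end $E$ of $M$. Since whether $[o,x)$ is exiting does not depend on $o$, we may take $o$ on a lift $\til S_0\subset\til M$ of the boundary surface $S=\partial E$, with $\ell$ realized geodesically in $\til S_0$ and passing near $o$; write $\til E\subset\til M$ for the corresponding lift of $E$, homeomorphic to $\til S\times[0,\infty)$ with $\til S_0=\til S\times\{0\}$. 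Pick points $z_k\to y$ and $z_k'\to y'$ along $\ell$ (vertices of the relevant orbit). Then $i(z_k),i(z_k')\in\til S_0$, these points leave every bounded subset of $\til M$, and by continuity of the Cannon--Thurston map $i(z_k),i(z_k')\to x$ in $\overline{\til M}$.

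The geometric input I would draw from \cite[Section 4.2]{mahan-elct} is the following. Because $\ell$ is a leaf of the ending lamination of $E$ (equivalently: the reason its two endpoints are identified by $\partial i$), the $\til M$--geodesic $[i(z_k),i(z_k')]$ joining the two far-apart points $i(z_k),i(z_k')$ of $\ell\subset\til S_0$ does \emph{not} remain in a bounded neighbourhood of $\til S_0$; rather, it is forced up into $\til E$, to a level $L_k$ (measured by the block structure of the model of $E$) with $L_k\to\infty$. This is the ``hyperbolic ladder over $\ell$ ascends the end $E$'' phenomenon underlying the identification part of the Cannon--Thurston theorem. Combined with $i(z_k),i(z_k')\to x$, it also gives: for every neighbourhood $U$ of $x$ in $\overline{\til M}$ one has $[i(z_k),i(z_k')]\subset U$ for all large $k$. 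So these segments converge to $x$ while simultaneously penetrating $\til E$ arbitrarily deeply.

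Now a thin-triangle argument finishes the proof. Fix $\delta$ with $\bH^3$ being $\delta$--hyperbolic. In the geodesic triangle with vertices $o$, $i(z_k)$, $i(z_k')$, choose $q_k\in[i(z_k),i(z_k')]$ at level $\ge L_k$ in $\til E$; then $q_k$ lies within $\delta$ of $[o,i(z_k)]\cup[o,i(z_k')]$, say of a point $p_k\in[o,i(z_k)]$. Since $[o,i(z_k)]$ converges to the ray $[o,x)$ and $q_k$ lies in the shrinking neighbourhoods $U$ of $x$, the point $p_k$ lies within a bounded distance of the tail of $[o,x)$ and marches off to $x$ along it; moreover $p_k$ has level at least $L_k-O(\delta)\to\infty$ in $\til E$. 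Hence $[o,x)$ passes within bounded distance of points of $\til E$ of arbitrarily large level, inside arbitrarily small neighbourhoods of $x$. Invoking once more the coarse product geometry of the model of $E$ (a geodesic ray of $\til M$ converging to $x$ that has reached a sufficiently high level of $\til E$ near $x$ stays, thereafter, in higher and higher levels), we conclude that the tail of $[o,x)$ is eventually contained in $\til E$ and projects into deeper and deeper blocks of $E$; that is, $\Pi([o,x))$ leaves every compact subset of $M$ and $[o,x)$ is exiting in $E$.

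The substantive point, and the main obstacle, is the assertion of the second paragraph: that the $\til M$--shortcut across the leaf $\ell$ dives into the \emph{correct} lifted end $\til E$ to unbounded depth. This, together with the ``no return to low levels'' statement used at the end of the third paragraph, is precisely the content of the hyperbolic-ladder estimates for leaves of ending laminations in \cite[Section 4.2]{mahan-elct} (in the general case, via the split-geometry model of \cite{mahan-split, minsky-elc1}); the honest thing is to quote those estimates rather than reprove them, since they rest on the full technology of that paper. By contrast, the reduction to the leaf case via Theorem \ref{ctstr} and the thin-triangle argument above are routine, and would occupy only a few lines once the cited facts are in place.
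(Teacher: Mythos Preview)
The paper does not give a direct proof of this proposition: it simply observes that Proposition~\ref{multi-almost-minimizing} establishes the stronger fact that $[o,x)$ is almost minimizing (hence a fortiori exiting) and explicitly defers the proof. Your attempt is therefore a genuinely different, direct route via the ladder machinery of \cite{mahan-elct}.

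Your outline is sound up through the thin-triangle step. That the $\til M$--geodesics $[i(z_k),i(z_k')]$ over a leaf of the ending lamination are forced arbitrarily deep into $\til E$ is indeed the heart of the Cannon--Thurston existence proof, and the thin-triangle argument does produce a sequence $p_k$ on (or within $\delta$ of) $[o,x)$ at levels tending to infinity. The genuine gap is the final ``no return'' step: you must show the \emph{entire tail} of $[o,x)$ lies at high level, not merely an unbounded sequence of points on it. You attribute this to ``the coarse product geometry of the model of $E$'', but that is exactly where degeneracy bites. For a degenerate group, $\til K$ (equivalently, a $G$--orbit) is \emph{not} quasiconvex in $\Hyp^3$; there is no general principle preventing a hyperbolic geodesic from oscillating between deep in $\til E$ and a bounded neighbourhood of $\til K$. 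Ruling this out for $x\in\Lambda_m$ is not a fact about the block structure of $E$ per se, but about the specific ladder over the leaf $\ell$: one needs that the qi ray in the ladder landing at $x$ is properly embedded and that $[o,x)$ tracks it in a suitable (graph) metric. That \emph{is} extractable from \cite{mahan-elct, mahan-split}, but not as the one-line ``coarse product'' fact you invoke; once unpacked, it is comparable in depth to what the paper packages into Proposition~\ref{multi-almost-minimizing}.

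By contrast, the paper's route via Proposition~\ref{multi-almost-minimizing} sidesteps the no-return issue entirely: it \emph{constructs} a minimizing ray to $z$ as a limit of minimizing segments from $\partial\til E$ to lifts $\til\sigma_n$ of exiting closed geodesics, so proper embedding is built in from the start rather than deduced after the fact.
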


In Proposition \ref{multi-almost-minimizing}, we will prove that $[o,x)$ is almost minimizing which is a stronger conclusion. Thus Proposition \ref{multexits} will also follow from Proposition \ref{multi-almost-minimizing};
and we omit the proof for now.

\subsection{Relationships between limit sets} We start with the observation that the conical limit set is contained in the
complement of the multiple limit set.

\begin{lemma}  $\Lambda_c \subset \Lambda_m^c$. \label{jklo} \end{lemma}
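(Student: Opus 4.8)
The statement to prove is $\Lambda_c \subset \Lambda_m^c$, i.e. no conical limit point is a multiple point of the Cannon-Thurston map. I would argue by contradiction: suppose $x \in \Lambda_c \cap \Lambda_m$. On one hand, by Proposition \ref{multexits} (whose stronger form Proposition \ref{multi-almost-minimizing} we may invoke), the ray $[o,x)$ is exiting — it leaves every compact set of $M$ and in fact it is almost minimizing, hence by Proposition \ref{ameqnt} its ideal endpoint lies in $\partial\til M \setminus \Lambda_H$. On the other hand, $x$ being conical means that the projection to $M$ of $[o,x)$ returns infinitely often to a fixed compact set (equivalently, there is a sequence $g_n \in G$ with $g_n o$ staying within bounded distance of $[o,x)$ and tending to $x$). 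The plan is to show these two behaviors are incompatible.

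\textbf{Key steps.} First, recall/establish the standard fact that $\Lambda_c \subset \Lambda_H$: a conical limit point is automatically a horospherical limit point. Indeed, if $[o,x)$ stays within distance $R$ of infinitely many orbit points $g_n o$ with $g_n o \to x$, then for any horoball $B_\xi$ based at $\xi = x$, the geodesic ray $[o,x)$ eventually enters $B_\xi$ and remains inside, so cofinitely many of the $g_n o$ (those within $R$ of points of $[o,x)$ deep enough in the horoball) lie in a slightly larger horoball based at $x$; after adjusting the horoball this gives infinitely many translates in $B_\xi$, so $x \in \Lambda_H$. Second, combine this with the characterization: if $x \in \Lambda_m$, then by Proposition \ref{multi-almost-minimizing} the ray $[o,x)$ is almost minimizing, so by the equivalence (1)$\Leftrightarrow$(3) of Proposition \ref{ameqnt} we have $\til\gamma(\infty) = x \in \partial\til M \setminus \Lambda_H$, i.e. $x \notin \Lambda_H$. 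This contradicts $x \in \Lambda_c \subset \Lambda_H$. Hence $\Lambda_c \cap \Lambda_m = \emptyset$, which is exactly $\Lambda_c \subset \Lambda_m^c$.

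\textbf{Main obstacle.} The only real content is the inclusion $\Lambda_c \subset \Lambda_H$ and making sure we are entitled to use the almost-minimizing conclusion for multiple points before Proposition \ref{multi-almost-minimizing} is formally proved — but the excerpt explicitly permits forward reference to it, and alternatively one can use the weaker Proposition \ref{multexits} together with a direct argument that an exiting ray landing at a multiple point cannot be conical (a conical ray's defining returns to a compact set would force the two leaf endpoints of $\LL_E$ over $x$ to be identified already inside a bounded region, contradicting that the identification only happens "at infinity" in the ladder/model geometry picture of \cite{mahan-elct}). The cleanest route, however, is the one above via $\Lambda_c \subset \Lambda_H$ and $\Lambda_m \subset \Lambda_H^c$; the latter is Theorem \ref{hllmain}'s content in one direction, and citing Proposition \ref{ameqnt} plus Proposition \ref{multi-almost-minimizing} suffices. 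So I expect the proof to be short, with the bookkeeping of horoballs in the $\Lambda_c \subset \Lambda_H$ step being the one place to be careful.
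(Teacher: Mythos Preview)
Your argument is correct, but it takes a different route from the paper. The paper works directly with the characterization ``$z$ is conical $\Longleftrightarrow$ $[o,z)\cap N_R(\til K)$ is unbounded for some $R$ $\Longleftrightarrow$ $[o,z)$ is \emph{not} exiting,'' and then invokes only the weaker Proposition~\ref{multexits} (namely $z\in\Lambda_m \Rightarrow [o,z)$ is exiting) to conclude that multiple points are non-conical. Your proof instead factors through the horospherical limit set: you establish $\Lambda_c\subset\Lambda_H$ and then use the stronger forward reference to Proposition~\ref{multi-almost-minimizing} together with Proposition~\ref{ameqnt} to get $\Lambda_m\subset\Lambda_H^c$. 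This is valid --- indeed it essentially derives the lemma as a corollary of the paper's main identification $\Lambda_m=\Lambda_H^c$ (Corollary~\ref{horo-mult}) --- but it is less economical at this stage of the exposition, since it needs ``almost minimizing'' where ``exiting'' suffices. You in fact identify the paper's actual argument as your alternative in the ``Main obstacle'' paragraph; that is the path the authors take.
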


In a
 general form, this has been proven by Jeon, Kapovich, Leininger and Ohshika \cite{jklo}.
We shall give a different proof specialized to our context.

\begin{proof} Recall that a point $z \in \Lambda$ is conical if, given a base point $o\in \Hyp^3$
 there exists $R > 0$ such that there exist infinitely many $g \in \Ga$ satisfying
$g.o \in N_R ([o,z))$. Equivalently, $z$ is conical if and only if $[o,z) \cap N_R(\til{K})$ is
unbounded, i.e.\
 $[o,z)$  visits $N_R(\til{K})$ infinitely often.

Hence  $z$ is
non-conical if and only if for all $R>0$, $[o,z) \cap N_R(\til{K}) $ is 
bounded, i.e.\ $[o,z)$ is  exiting.
In particular, since  $z \in \Lambda_m$ implies that $[o,z)$ is exiting (by Proposition
\ref{multexits}), it follows that $z$ is
non-conical. The Lemma follows.
\end{proof}

\begin{prop} $\Lambda_c$ is a proper subset of $\Lambda_H$. \label{hnotc}
\end{prop}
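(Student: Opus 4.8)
The statement to prove is that $\Lambda_c \subsetneq \Lambda_H$, i.e.\ the conical limit set is a \emph{proper} subset of the horospherical limit set. The inclusion $\Lambda_c \subset \Lambda_H$ is classical and can be taken for granted (a conical limit point is visited infinitely often within bounded distance of the orbit, and one then slides into any given horoball; this is standard from Eberlein's work and is implicitly available via Proposition \ref{ameqnt}, since conical points are certainly not ideal points of almost minimizing rays). So the entire content is \emph{properness}: exhibiting a point of $\Lambda_H \setminus \Lambda_c$.

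\textbf{Main idea.} The plan is to produce a horospherical limit point that is not conical by using a thick exiting geodesic ray. Concretely, first invoke Lemma \ref{minimizing-almost-minimizing} and Remark \ref{nondense-rem}: a degenerate $M$ of infinite diameter has minimizing (hence almost minimizing) geodesic rays $\gamma$. Combined with the earlier discussion (closed geodesics are thick, and — as asserted in the text just before Lemma \ref{nodich} — degenerate manifolds always contain thick exiting geodesic rays), one can further arrange $\gamma$ to be thick and almost minimizing; this uses the remark that being thick/exiting/almost-minimizing depends only on the ideal endpoint, so one may take a limit of closed geodesics exiting a compact set, which stays in the thick part. Let $\til\gamma$ be a lift with ideal endpoint $\xi = \til\gamma(\infty)$. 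By Proposition \ref{ameqnt}, since $\gamma$ is almost minimizing, $\xi \in \partial\til M \setminus \Lambda_H$ — wait, that gives a point \emph{not} in $\Lambda_H$, which is the wrong direction.

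\textbf{Corrected main idea.} So instead the target is: find $\xi \in \Lambda_H$ whose geodesic ray $[o,\xi)$ is thick and exiting; then $\xi$ is non-conical because $[o,\xi)$ exits every bounded neighborhood of $\til K$ (exactly the criterion recalled in the proof of Lemma \ref{jklo}: non-conical $\iff$ exiting), yet $\xi \in \Lambda_H$ by construction. To build such a $\xi$: the key point is that $\Lambda_H$ is large — in fact for a degenerate manifold $\Lambda_H$ has full Lebesgue measure in $S^2_\infty$ (the horospherical limit set is full whenever the Bowen–Margulis / Lebesgue measure is conservative and ergodic, which holds here; this is the content surrounding Lemma \ref{nodich} and the ergodicity input from \cite{minsky-elc2}), while $\Lambda_c$ is also full, so a naive measure argument is insufficient and one genuinely needs to locate a specific point. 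I would argue as follows: take the thick exiting ray $\til\gamma$ with endpoint $\eta$ (thick exiting rays exist by the text). If $\eta$ happens to lie in $\Lambda_H$, we are done since thick exiting $\Rightarrow$ not almost minimizing is false — rather, a thick \emph{exiting} ray need not be almost minimizing, and if it is not almost minimizing then by Proposition \ref{ameqnt} its endpoint lies in $\Lambda_H$; and it is non-conical precisely because it is exiting. So the real task reduces to: \emph{exhibit a thick exiting geodesic ray that is not almost minimizing}.

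\textbf{Executing the last reduction.} To get a thick exiting ray that is not almost minimizing: start from a minimizing ray $\gamma_0$ (Remark \ref{nondense-rem}) which is thick after the limiting-closed-geodesics construction, then modify it by ``wrapping'': splice in longer and longer detours along thick closed geodesics deep in $M$, reparametrizing to a geodesic ray, exactly as in Remark \ref{rem-minam} and the construction after it. Each detour stays in the thick part (being built from closed geodesics and bounded bridges in the compact core, which lies in the thick part up to a fixed constant since $M$ has no parabolics and the thin parts are Margulis tubes about short geodesics that can be avoided). The resulting ray $\gamma$ is still exiting and thick, but the accumulated detour lengths force $d_M(\gamma(0),\gamma(t)) - t \to -\infty$, so $\gamma$ is not almost minimizing. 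By Proposition \ref{ameqnt} its lift's endpoint $\xi$ lies in $\Lambda_H$; by the exiting criterion in the proof of Lemma \ref{jklo}, $\xi \notin \Lambda_c$. Hence $\Lambda_c \subsetneq \Lambda_H$.

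\textbf{Expected main obstacle.} The delicate point is making the ``thick detour'' construction rigorous: one must ensure that straightening the piecewise-geodesic (minimizing ray $+$ loops) to an honest geodesic ray (i) keeps it exiting, (ii) keeps it in the thick part, and (iii) still destroys almost-minimality by a controlled amount. Points (i) and (iii) are handled by standard $\mathrm{CAT}(-1)$ comparison (the straightened ray fellow-travels the broken path on compact pieces, and the length deficit is preserved up to additive error). Point (ii) is the subtle one: a priori straightening could push the ray into a Margulis tube. The fix is to perform the detours along closed geodesics in the compact core (or in a fixed thick slab $S\times[0,N]$), keep the slab far from any short-geodesic tube, and note that a geodesic staying uniformly close to such a slab stays thick; alternatively, one can appeal directly to the already-asserted existence (in the paragraph before Lemma \ref{nodich}, with forward reference to later sections) of thick exiting geodesic rays in degenerate manifolds of unbounded geometry, and separately handle the bounded-geometry case where \emph{every} exiting ray is thick (Theorem \ref{omni2}(1)) so that any non-almost-minimizing exiting ray — e.g.\ any exiting ray that is not almost minimizing, which exists because not all exiting rays can be almost minimizing once $M$ has positive-dimensional ``space of ends'' — does the job. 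I would present the argument uniformly by just picking a thick ray that is not almost minimizing, citing the two cases as needed.
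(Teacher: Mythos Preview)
Your core strategy is exactly the paper's: since $z\notin\Lambda_c$ iff $[o,z)$ is exiting, and $z\in\Lambda_H$ iff $[o,z)$ is \emph{not} almost minimizing (Proposition~\ref{ameqnt}), it suffices to build an exiting geodesic ray that is not almost minimizing, by starting from an almost minimizing ray and splicing in longer and longer loop detours along it so that the concatenation lifts to a quasigeodesic. That is precisely what the paper does.

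Where you diverge is in insisting that the resulting ray be \emph{thick}. This is unnecessary and is the source of all the difficulties you flag. Nothing in the characterizations of $\Lambda_c$ or $\Lambda_H$ involves thickness; you only need ``exiting'' and ``not almost minimizing''. Once you drop thickness, your ``main obstacle (ii)'' evaporates: there is no need to keep the straightened geodesic out of Margulis tubes, no need to route detours through the compact core (which, as you partly realize, would threaten the exiting property), and no need for the case split or the vague appeal to a ``positive-dimensional space of ends'' at the end.

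Concretely, the paper's version of your splicing is simpler and more robust: along the almost minimizing ray choose points $w_i$ with $d(w_i,w_{i+1})$ large; near each $w_i$ there is a closed curve $\sigma_i$ of length $\le 1$ (or the Margulis tube containing it) within bounded distance --- this uses nothing more than that in a degenerate end short essential loops exist near every point --- and one winds $n_i$ times around $\sigma_i$. With $n_i$ and the gaps chosen large, the lift is a uniform quasigeodesic, the tracking geodesic is exiting, and it is manifestly not almost minimizing. No thickness is asserted or needed. If you simply delete the word ``thick'' from your reduction and allow the detour curves $\sigma_i$ to be short, your sketch becomes the paper's proof.
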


\begin{proof} $[o,z)$ lands in the complement of $\Lambda_c$ if and only if it is exiting. On the other hand, 
$[o,z)$ lands in the complement of $\Lambda_H$ if and only if it is almost minimizing by Proposition \ref{ameqnt}.

It therefore suffices to find exiting rays that are not almost minimizing. Let $[o,z)$
be an almost minimizing ray in $E$. To construct $[o,z)$ just take a sequence $z_n$ exiting $E$, join them to $S (= \partial E)$ by minimizing geodesic segments, and take a limit.

Next, choose $w_i \in [o,z)$ such that $d(w_i, w_{i+1}) > D_0$, for a large $D_0$ (to be fixed later).
Let $\sigma_i$ be closed geodesics of length at most 1 such that 
\begin{enumerate} 
\item If the length of $\sigma_i$ is larger than the Margulis constant, then  $d(w_i, \sigma_i) \leq 1$. 
\item Else, if $T_i$ is the Margulis tube containing $\sigma_i$, then  $d(w_i, T_i) \leq 1$.
\end{enumerate}
 We let $\alpha_i$ be a loop based at $w_i$ winding $n_i$ times around $\sigma_i$ obtained by
adjoining initial and final
segments of length at most one joining $w_i$ to $\sigma_i$ or $T_i$, and winding $n_i$ times around $\sigma_i$
in between. 

Then the concatenation $\bigcup_i ([w_{i-1}, w_i] \cup \alpha_i)$ is exiting in $E$
and lifts to a quasigeodesic $\eta$ with bounded constants
(provided $D_0$ and
$n_i$'s are sufficiently large) in $\til E$. The geodesic tracking $\eta$ is then an example
of an exiting geodesic that is not almost minimizing.
\end{proof}

Our last goal in this section is to relate the horospherical and multiple limit sets.
We shall need the following consequence of Thurston's \cite[Chapter 9]{thurstonnotes} result that
the ending lamination corresponding to a degenerate end is well-defined.

\begin{lemma} \label{endpts}
Let $S = \partial E$ be the boundary of a degenerate end $E$
with ending lamination $\LL_E$. Assume that $S$ is equipped with a hyperbolic structure.
Let $\alpha_n$ be a sequence of closed geodesics in $S$ whose geodesic realizations $\sigma_n$ exit $E$.
Denote by $\alpha_n^{\pm \infty}$ the attracting and repelling fixed points of $\alpha$ on
$\partial {\til{S}} = S^1$. If $p$ is a limit (in $S^1$) of a (subsequence of) 
$\alpha_n^{\infty}$, then $p$ is the end-point of a leaf of $\LL_E$.

Conversely, any end-point of a leaf of $\LL_E$ is a limit of a (subsequence of) 
$\alpha_n^{\infty}$'s.
\end{lemma}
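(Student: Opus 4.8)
The statement is essentially a restatement of the fact that the ending lamination $\LL_E$ is the Hausdorff limit of the geodesic realizations (in $S$) of closed curves whose realizations in $M$ exit $E$. I will prove both directions using Thurston's characterization of the ending lamination via sequences of curves leaving every compact set, together with the standard fact that the endpoints of closed geodesics in $S$ (viewed in $\partial\til S = S^1$) vary continuously in a suitable sense under Hausdorff convergence of the geodesics.

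\emph{First direction.} Suppose $\alpha_n$ is a sequence of closed geodesics in $S$ whose geodesic realizations $\sigma_n$ in $E$ exit every compact set. I would first pass to a subsequence so that $\alpha_n$ (as projective measured laminations, using the compactness of $\PML(S)$) converges to some $\lambda \in \PML(S)$, and simultaneously so that the fixed points $\alpha_n^{\pm\infty} \in S^1$ converge; say $\alpha_n^{\infty} \to p$. The hypothesis that $\sigma_n$ exits $E$ means that the $\ell_{f_k}$-lengths of $\alpha_n$ stay bounded for each fixed $k$ along the sequence, or more precisely one can feed $\alpha_n$ into the geometrically tame structure (the surfaces $f_k(S)$ of the theorem preceding Section~\ref{ends}): since $\sigma_n$ exits, the curves $\alpha_n$ become the "short curves" realized deep in $E$, so their supporting lamination must be the ending lamination. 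This is exactly the content of Thurston \cite[Chapter 9]{thurstonnotes} / Bonahon \cite{bonahon-bouts}: the support $\nu$ of any such limit $\lambda$ equals the ending lamination $\LL_E$, independently of choices. Now I need to translate "$\alpha_n \to \lambda$ with $\mathrm{supp}(\lambda) = \LL_E$" into "$p$ is an endpoint of a leaf of $\LL_E$". Lift to $\til S = \Hyp^2$: each $\alpha_n$ has an axis $\til\sigma_n$, a bi-infinite geodesic with endpoints $\alpha_n^{\pm\infty}$. Since these axes are leaves of (a lift of) a geodesic lamination converging in the Hausdorff topology on closed subsets of $\Hyp^2$ to a sublamination of $\til\nu = \til\LL_E$ (Hausdorff convergence of geodesic laminations follows from weak-* convergence of the measures together with the fact that the $\alpha_n$ exhaust, or more carefully one passes to Hausdorff limits directly), the limiting geodesic — which has endpoints $\lim \alpha_n^{+\infty} = p$ and $\lim \alpha_n^{-\infty}$ — is a leaf of $\til\LL_E$. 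Hence $p$ is an endpoint of a leaf of $\LL_E$.

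\emph{Converse direction.} Given a leaf $\ell$ of $\LL_E$ with an endpoint $p \in S^1$, I want closed geodesics $\alpha_n$ with $\sigma_n$ exiting $E$ and $\alpha_n^{\infty} \to p$. Since $\LL_E$ is minimal and filling (it is an ending lamination), I can choose a sequence of simple closed curves $\beta_n$ on $S$ whose geodesic representatives converge in the Hausdorff topology to (a sublamination containing) $\ell$, with one endpoint of a chosen lift $\til\beta_n$ converging to $p$; concretely, approximate the leaf $\ell$ by long segments of $\LL_E$ and close them up, or use that periodic geodesics are dense among leaves-limits. One then checks these $\beta_n$ have geodesic realizations exiting $E$: this is again Thurston/Bonahon — curves whose geodesic representatives in $S$ approximate the ending lamination are realized arbitrarily deep in the degenerate end (if infinitely many $\sigma_n$ stayed in a compact part of $E$, a subsequence would converge to a realization of $\LL_E$ in the interior of $E$, contradicting that $\LL_E$ is the ending lamination, e.g.\ via Bonahon's result that the ending lamination is unrealizable, or the argument in the proof of Claim~\ref{emb}). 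Relabel $\beta_n$ as $\alpha_n$ and we are done.

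\emph{Main obstacle.} The delicate point is the passage between the two notions of convergence: projective-measured-lamination convergence $\alpha_n \to \lambda$ (which is what Thurston's theory directly controls) versus Hausdorff convergence of the axes $\til\sigma_n$ in $\Hyp^2$ together with convergence of their \emph{individual endpoints} $\alpha_n^{\pm\infty}$ to endpoints of a single leaf of $\til\LL_E$. Weak-* convergence of transverse measures does not by itself pin down endpoints of individual leaves, so I expect to argue by first extracting a Hausdorff limit of the axes $\til\sigma_n$ (possible since geodesic laminations form a compact space), noting that this Hausdorff limit is a geodesic lamination whose support is contained in $\til\LL_E$ (using that the transverse measures still converge weak-* to a measure supported on $\LL_E$, or directly that any realized limit must be the ending lamination), and then reading off that $p = \lim\alpha_n^{\infty}$ is an endpoint of a leaf of that Hausdorff limit, hence of $\LL_E$. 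Managing the interplay — and, in the converse, producing the approximating closed curves with endpoints converging to the \emph{prescribed} $p$ rather than to some random endpoint — is where the real work lies; everything else is bookkeeping with the geometrically tame structure and standard lamination compactness.
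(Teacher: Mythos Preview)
Your forward direction is essentially the paper's argument: both invoke Thurston \cite[Chapter~9]{thurstonnotes} to control the subsequential limits of the lifted axes $[\alpha_n]\subset\til S$. One point the paper makes explicit that you miss: the limit of the $[\alpha_n]$ need not be a leaf of $\til\LL_E$ --- it may be a \emph{diagonal} of a complementary ideal polygon of $\LL_E$. Your assertion that ``the limiting geodesic \dots\ is a leaf of $\til\LL_E$'' and that the Hausdorff limit is ``a sublamination of $\til\LL_E$'' are therefore not literally correct. This does not damage the conclusion, since the ideal vertices of such a polygon are still endpoints of leaves (the sides being leaves), but you should account for this case.

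For the converse, the paper argues more directly than you do and stays with the \emph{given} sequence $\alpha_n$: since the Hausdorff limit of the $\alpha_n$ on $S$ contains $\LL_E$ (by minimality of the ending lamination combined with Thurston's result), every leaf of $\til\LL_E$ is already approximated by suitable lifts of the $\alpha_n$, and the endpoints follow. Your construction of an auxiliary sequence $\beta_n$ approximating a prescribed leaf, together with the unrealizability argument that their realizations must exit $E$, is correct, but it proves the existence of \emph{some} exiting sequence with the desired endpoint behavior rather than showing that the original $\alpha_n$ already does the job, which is what the lemma (read literally) asserts.
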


\begin{proof} Let $[\alpha_n] (\subset {\til{S}}) $ denote the bi-infinite geodesic
with end-points $\alpha_n^{\pm \infty}$. It follows from \cite[Chapter 9]{thurstonnotes} that any
subsequential limit (in the Gromov-Hausdorff topology) of $[\alpha_n]$'s is either a leaf of $\LL_E$
or contained (as a diagonal) in an ideal polygon in the complement of $\LL_E$. One direction of
the Lemma follows.

Further, since the Hausdorff limit of $\alpha_n$'s on $S$ contains $\LL_E$, the converse follows. 
\end{proof}

We shall now show:
\begin{prop}	\label{multi-almost-minimizing}
$z\in\Lambda_m$ if and only if $[o,z)$ is almost minimizing.
\end{prop}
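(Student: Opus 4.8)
The plan is to restate the proposition as an equality of subsets of $\Lambda$ and prove the two inclusions, using approximation by closed geodesics to transfer information across the Cannon-Thurston map. By Proposition~\ref{ameqnt}, $[o,z)$ is almost minimizing if and only if $z\in\Lambda\setminus\Lambda_H$, so the proposition is equivalent to the identity $\Lambda_m=\Lambda_H^c$. Throughout I use the elementary fact that, since $\partial i$ is continuous and $\pi_1(S)$-equivariant, it carries the attracting fixed point $\alpha^{\infty}\in\partial\til S$ of a nontrivial $\alpha\in\pi_1(S)$ (where $S=\partial E$) to the attracting fixed point of $\alpha$ on $S^2=\partial\til M$ (apply $\partial i$ to $\alpha^k x\to\alpha^\infty$ for a suitable $x$ and use equivariance). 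Combining this with Lemma~\ref{endpts} gives the reformulation I will use in both directions: \emph{$z\in\Lambda_m$ if and only if there is a degenerate end $E$ with $S=\partial E$ and a sequence of closed curves $\alpha_n$ on $S$ whose geodesic realizations in $M$ exit $E$, such that the attracting fixed points of $\alpha_n$ on $S^2$ converge to $z$.}

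$\Lambda_m\subseteq\Lambda_H^c$: Let $z=\partial i(y)$ with $y$ an endpoint of a leaf $\ell$ of the ending lamination $\LL_E$ of some end $E$ (Theorem~\ref{ctstr}). As in the (commented) proof of Proposition~\ref{multexits}, the construction of \cite[\S4.2]{mahan-elct} provides a quasi-isometrically embedded ``ladder'' $B_\ell\subset\til E$ associated with $\ell$ and a uniform quasigeodesic ray $r\subset B_\ell$ issuing from $\til K$ and landing at $z$; the geodesic $[o,z)$ stays within bounded distance of $r$. To conclude $z\notin\Lambda_H$ I will show that the covering projection $\Pi:\til M\to M$ is coarsely distance preserving along $[o,z)$, i.e.\ $d_M(\Pi a,\Pi b)\ge d_{\til M}(a,b)-C$ for all $a,b\in[o,z)$ with a uniform $C$; this is precisely the negation of $z\in\Lambda_H$, hence by Proposition~\ref{ameqnt} is equivalent to $[o,z)$ being almost minimizing. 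The key geometric input is that $\Pi$ maps $B_\ell$ quasi-embeddedly onto a bounded neighbourhood of the geodesic realization of the single leaf $\ell$, an embedded line exiting $E$, so that no nontrivial translate $g B_\ell$ ($g\notin\mathrm{Stab}(\ell)$) comes boundedly close to $B_\ell$ arbitrarily deep in $\til E$; together with quasiconvexity of $B_\ell$ this excludes along $r$ (hence along $[o,z)$) exactly the shortcutting configurations produced in the proof of Proposition~\ref{hnotc}. Turning this into a quantitative estimate is the main technical step and uses the model-geometry material of the Appendix.

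$\Lambda_H^c\subseteq\Lambda_m$: Suppose $\gamma=[o,z)$ is $C_0$-almost minimizing. Then $\gamma$ is exiting, and after deleting an initial segment we may assume it is exiting in a single degenerate end $E$, with $\Pi\gamma(0)\in S=\partial E$. Choose $t_n\to\infty$ and let $\mu_n$ be a shortest arc in $M$ from $S$ to $\Pi\gamma(t_n)$; the almost-minimizing property forces $t_n-C_0'\le|\mu_n|\le t_n$ for a uniform $C_0'$, so $\mu_n$ and $\Pi\gamma|_{[0,t_n]}$ are two nearly-shortest arcs from $S$ to $\Pi\gamma(t_n)$. Closing up $\Pi\gamma|_{[0,t_n]}$ by $\mu_n^{-1}$ together with a bounded arc on $S$ produces a loop $\lambda_n$ in $E$; let $\alpha_n\subset S$ be the corresponding closed curve and $\sigma_n$ its geodesic realization in $M$. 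The goal is to verify the hypotheses of the reformulation above: that (after possibly refining the construction) the $\sigma_n$ exit $E$ and the attracting fixed points of $\alpha_n$ on $S^2$ converge to $z$. Lemma~\ref{endpts} then yields a subsequential limit $y$ of $\alpha_n^\infty$ that is an endpoint of a leaf of $\LL_E$ with $\partial i(y)=z$, so $z\in\Lambda_m$. The almost-minimizing hypothesis is what makes this work: roughly, it forces the conjugacy classes $[\lambda_n]$ to leave every finite set (a short free homotopy would contradict the lower bound $d_M(S,\Pi\gamma(t_n))\ge t_n-C_0'$), hence $\sigma_n$ to exit $E$, and it pins the ideal endpoints of the lifted axes of $\alpha_n$ against the endpoint $z$ of $\til\gamma$.

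I expect the last point to be the principal obstacle of the whole argument: one must control simultaneously the \emph{topology} of the closing-up loops $\lambda_n$ — so that $[\lambda_n]$ genuinely escapes in $\pi_1(E)$ and $\sigma_n$ exits $E$ rather than representing a bounded set of conjugacy classes — and the \emph{geometry} of their realizations and axes, so that the attracting fixed points still converge to $z$. This is delicate precisely when $\gamma$ travels through arbitrarily thin parts of $M$, where a naive closing-up can both collapse the homotopy type and drag the axes away from $z$; handling this case will likely require a mild use of the model-geometry description of $E$.
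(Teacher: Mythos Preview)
Both directions of your sketch have genuine gaps, and in each case the paper's argument turns out to be simpler than what you propose.

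For $\Lambda_m\subset\Lambda_H^c$ your key geometric claim --- that $\Pi$ maps the ladder $B_\ell$ ``quasi-embeddedly onto a bounded neighbourhood of the geodesic realization of the single leaf $\ell$, an embedded line exiting $E$'' --- is false as stated: leaves of the ending lamination are precisely the laminations that are \emph{not} realized as geodesics in $M$, so there is no such embedded line to speak of. The ladder does exit, but that only recovers Proposition~\ref{multexits}; you have not closed the gap between exiting and almost minimizing, and the appeal to the Appendix is not a substitute for an argument. The paper avoids ladders here entirely and instead constructs a genuinely \emph{minimizing} ray landing at $z$: take exiting realizations $\tilde\sigma_n$ of approximating closed curves $\alpha_n$, pick $\tilde x_n\in\tilde\sigma_n$, and drop a nearest-point segment $[\tilde z_n,\tilde x_n]$ to $\partial\tilde E$. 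A subsequential limit of these segments is a minimizing ray; the only work is to show its ideal endpoint is $z$, which is handled by an elementary argument (translate so the footpoints stay bounded and use equivariance and continuity of the Cannon--Thurston map to track where $g_n\tilde x_n$ goes). Lemma~\ref{minimizing-almost-minimizing} then gives almost-minimizing for $[o,z)$.

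For $\Lambda_H^c\subset\Lambda_m$ your closing-up loop $\lambda_n=\Pi\gamma|_{[0,t_n]}\cdot\mu_n^{-1}\cdot(\text{arc in }S)$ need not be nontrivial at all: if $\gamma$ is actually minimizing one may take $\mu_n=\Pi\gamma|_{[0,t_n]}$ and $\lambda_n$ is null-homotopic. Even when nontrivial, nothing forces the geodesic realizations $\sigma_n$ to \emph{exit} $E$, which is exactly the hypothesis of Lemma~\ref{endpts}. The obstacle you flag about thin parts is secondary to this more basic failure. The paper bypasses the issue by not closing up $r$ at all: geometric tameness supplies, for each $t$, a short unknotted essential loop $\sigma_t$ within bounded distance of $r(t)$, freely homotopic to a simple $\alpha_t\subset S$; since $\sigma_t$ sits near $r(t)$ it exits automatically. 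Lifting gives a ``trapezoid'' with sides $\bar\alpha_t, r_{t1}, \bar\sigma_t, r_{t2}$, and the concatenation $r_{t1}\cdot\bar\sigma_t\cdot r_{t2}^{-1}$ is a \emph{uniform} quasigeodesic (this is the nontrivial input, quoted from \cite{mitra-endlam, mahan-elct, mahan-kl}). Hence the terminal endpoints $w_t$ of $\bar\alpha_t$ and the endpoints of $r_{t1}$ converge to the same boundary point, i.e.\ $r(\infty)=\hat i(\lim w_t)$, and Lemma~\ref{endpts} identifies $\lim w_t$ as a leaf endpoint, so $r(\infty)\in\Lambda_m$.
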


\begin{proof}[Proof of Proposition \ref{multi-almost-minimizing}]
We continue with the notation of Lemma \ref{endpts}.
We first show that if $\overline r$
is the lift of an almost minimizing geodesic to $\til E$, then its end-point (in $\partial {\Hyp}^3$)
belongs to $\Lambda_m$. 

Let $r$ be an almost minimizing geodesic ray in $E$.
Then there exists $C > 0$ such that for all $t \in [0, \infty )$, there is a  closed unknotted essential
loop $\sigma_t$ ($\sigma_t$ is homotopic to a simple closed curve on $S$) such that $l(\sigma_t) \leq C$
and $d(\sigma_t, r(t)) \leq C$ (here we cannot assume that $\sigma_t$ is a geodesic, the geodesic realization may be very short, enclosed in a deep Margulis tube, in which case the distance to the geodesic realization
is quite large). 

Let $\alpha_t$ be a simple closed curve on $S=\partial E$ freely homotopic to $\sigma_t$.
Then there exists a geodesic segment $r_t$ of length in the interval $[t-2C, t+ 2C]$ joining 
$\alpha_t, \sigma_t$ such that the Hausdorff limit (as $t \to \infty$) of $r_t$ is asymptotic to $r$.
Let $\bbar{\alpha_t}, \bbar{\sigma_t}$ be geodesic segments in $\til E$
that are lifts of $\alpha_t, \sigma_t$, such that their initial
points
and end-points are connected by lifts  $r_{t1}, r_{t2}$ of $r_t$. (See diagram below)
Assume further that the initial point of $\bbar{\alpha_t}$ (and $r_{t1}$) is a fixed 
point $o \in \til E$ (independent of $t$), and let the end-point of $\bbar{\alpha_t}$ be denoted by $w_t$.


\begin{figure}[hbtp]
\psfrag{s}{$\bar\sigma_t$}
\psfrag{a}{$\bar\alpha_t$}
\psfrag{t}{$r_{1t}$}
\psfrag{r}{$r_{2t}$}
\psfrag{o}{$o$}
\psfrag{w}{$w_t$}

\includegraphics[height=4cm]{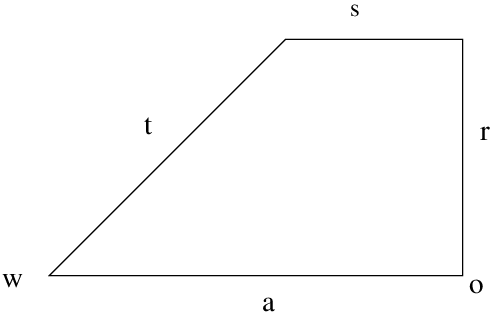}

\end{figure}

 By Lemma \ref{endpts}, $w_t$ converges (up to subsequence) to a point $z $ which is the end-point of a leaf 
of $\LL_E$.  Hence by Theorem \ref{ctstr}, $\hat{i} (z ) \in \Lambda_m$ (recall that
 $\hat i$ denotes the Cannon-Thurston
map).  

The concatenation $\gamma_t$ of
$r_{t1}$,  $\bbar{\sigma_t}$ and $r_{t2}$ (with orientation reversed)
is  a uniform  quasigeodesic. The proof of this statement is a replica of the argument
occurring in Lemma 3.5 of \cite{mitra-endlam}, Proposition 3.1 of \cite{mahan-elct} or Proposition 5.2 of \cite{mahan-kl}. In the last reference
a detailed proof is given and we omit the proof here. 

Let $r (\infty)$ denote the terminal
point of $r$ in $S^2 = \partial {\Hyp}^3$. Then, since $\gamma_t$ is a uniform quasigeodesic
containing $r_{t1}$ as an initial segment, it follows
that the end-points of $r_{t1}$ and $\gamma_t$ converge to the same point on  $S^2$, or equivalently,
$r (\infty) = \hat{i} (z )$. Hence $r (\infty) \in \Lambda_m$. Since $r$ was arbitrary, we have shown
that $ \Lambda_H^c \subset \Lambda_m$.\\

To prove the reverse inclusion, given $z \in \Lambda_m$ we will construct a geodesic ray $\tilde\gamma\subset\tilde M$ with endpoint $z$ (but whose initial point may not be $o$) whose projection to $M$ is minimizing. It will then follow from Lemma \ref{minimizing-almost-minimizing} that $[o,z)$ is almost minimizing.

Let $(a, b)$ be a bi-infinite leaf of $\LL_E$ such that $\hat{i} (a) = \hat{i} (b) =z$ (notice that there is always such a leaf according to \cite{mahan-elct} and \cite{mahan-kl}). Let $\alpha_n$ be a sequence of closed geodesics on $S= \partial E$ and $\tilde\alpha_n\subset\tilde S$ a leaf in the preimage of $\alpha_n$ such that 
\begin{enumerate}
\item $\tilde\alpha_n^{\pm \infty}$ converges to $\{ a, b \}$. 
\item The geodesic realizations $\sigma_n$ in $M$, of $\alpha_n$ in $E$, exit $E$.
\end{enumerate}

Let $\tilde\sigma_n$ be the leaf of the preimage of $\sigma_n$ with the same endpoints as $\tilde\alpha_n$. Let $\tilde x_n$ be a point of $\tilde\sigma_n$ and $\tilde z_n\subset\partial\tilde E$ be a point realizing the distance between $\tilde x_n$ and $\partial\tilde E$. The existence of $\tilde z_n$ comes from the properness of the embedding $\partial\tilde E\subset\tilde E$. If $\tilde z_n$ stays in a compact set, then, up to extracting a subsequence, the geodesic segments joining $\tilde z_n$ to $\tilde x_n$ converge to a geodesic ray $\tilde \gamma$ joining the limit $\tilde z_\infty$ of $\tilde z_n$ to $z$. Since $\tilde z_n$ realizes the distance between $\tilde x_n$ and $\partial\tilde E$, the projection $\gamma$ of $\tilde \gamma$ is a minimizing geodesic ray (since, as usual, a limit of minimizing geodesic segments
is a minimizing geodesic ray) and we are done.

Otherwise we pick a sequence $g_n\in\pi_1(S)$ such that $g_n\tilde z_n$ stays in a compact set. We are going to show that $g_n\tilde x_n$ also tends to $z$. Then up to extracting a subsequence, the geodesic segments joining $g_n\tilde z_n$ to $g_n\tilde x_n$ converges to a geodesic ray $\tilde \gamma$ joining the limit $\tilde z_\infty$ of $g_n\tilde z_n$ to $z$. Again, the projection $\gamma$ of $\tilde \gamma$ is a minimizing geodesic ray
(since a limit of minimizing geodesic segments
is a minimizing geodesic ray) and we are done.

We first show that $\tilde z_n$ can only exit toward $z$. Using the continuity of Cannon-Thurston map, we will show that this imposes some restrictions on the behavior of $g_n$ that will lead us to the expected conclusion ($g_n\tilde x_n$ tends to $z$).

\begin{claim}
Up to extracting a subsequence, either $\tilde z_n$ stays in a compact set or it tends to $z$.
\end{claim}

\begin{proof}
Assume that $\tilde z_n$ does not stay in a compact set. Then, up to extracting a subsequence, it converges to a point $\chi\in\Lambda_G$. Seeking a contradiction, assume that $\chi\neq z$. Then the geodesic segments $[\tilde x_n,\tilde z_n]$ converge to the geodesic $l$ with endpoints $(\chi,z)$. Pick $2$ points $\tilde y\subset l$ and $y'\subset\partial\tilde E$. For $n$ large enough we have $d(\tilde y, \tilde z_n)\geq d(\tilde y,\tilde y')+2$. Since $[\tilde x_n,\tilde z_n]$ converge to $l$, for $n$ large enough it passes nearby $\tilde y$ so that $d(\tilde x_n,\tilde z_n)\geq d(\tilde x_n,\tilde y)+d(\tilde y,\tilde z_n)-1\geq d(\tilde x_n,\tilde y)+d(\tilde y,\tilde y')+1$. This would contradict the assumption that $\tilde z_n$ realizes the distance between $\tilde x_n$ and $\partial\tilde E$.
\end{proof}

So let us assume that $\tilde z_n$ tends to $z$, in $\partial\tilde E$ it tends to a point $c$ in the ideal boundary such that $i(c)=z$.

Pick a sequence $g_n\in\pi_1(S)$ such that $g_n\tilde z_n$ stays in a compact set and denote by $(d_n,c_n)$ the attracting and repulsing points of $g_n$. Since $\tilde z_n$ tends to $c$, $c_n$ converge to $c$. Extract a subsequence such that $d_n$ converges to $d$.
If $d=c$, then $g_n(a)$ and $g_n(b)$ converge to $c$. It follows that $g_n\tilde\sigma_n$ tends to $i(c)=z$, in particular $g_n\tilde x_n$ tends to $z$.
If $d\neq b$, the axis of $g_n$ tends to the geodesic with endpoints $(c,d)$. In particular the distance from $g_n\tilde z_n$ and hence from $\tilde z_n$ to the axis of $g_n$ is bounded. It follows that the distance from $\tilde z_n$ to $\tilde\alpha_n$ is bounded. Then since $g_n\tilde z_n$ stays in a compact set, up to extracting a subsequence, $g_n\tilde\alpha_n$ converges to a leaf of the preimage of $\LL_E$. This is only possible if $(a_n,b_n)$ and $(g_na_n,g_nb_n)$ converge to $(c,d)$ or $(d,c)$. It follows that $g_n\tilde x_n$ tends to $z$.

\end{proof}
We should remark here that the construction of an almost minimizing geodesic in the above proof can very well furnish
minimizing ones. We refer the reader to Remark \ref{rem-minam} for a clarification on why we have decided
to deal with almost minimizing rather than  minimizing geodesics.
Combining Propositions \ref{ameqnt} and \ref{multi-almost-minimizing}, we get:

\begin{cor} \label{horo-mult}
$\Lambda_H^c = \Lambda_m$. 
\end{cor}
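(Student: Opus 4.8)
The plan is to derive $\Lambda_H^c = \Lambda_m$ directly by chaining the two equivalences already established. Proposition \ref{ameqnt} (the combined result of Ledrappier and Eberlein) tells us that for a geodesic ray $\gamma$ in a negatively curved manifold, $\gamma$ is almost minimizing if and only if its ideal endpoint $\tilde\gamma(\infty)$ lies in $\partial\til M \setminus \Lambda_H$. Specializing to the degenerate hyperbolic $3$-manifold $M$ and to a ray of the form $[o,z)$ with $z \in \partial\til M = \bS^2$, this says exactly that $z \in \Lambda_H^c$ if and only if $[o,z)$ is almost minimizing. On the other hand, Proposition \ref{multi-almost-minimizing} asserts that $z \in \Lambda_m$ if and only if $[o,z)$ is almost minimizing. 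Combining the two biconditionals through their common middle term ``$[o,z)$ is almost minimizing'' yields $z \in \Lambda_H^c \iff z \in \Lambda_m$ for every $z \in \bS^2$, which is the claimed set equality.

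The only point that requires a word of care is that Proposition \ref{ameqnt} is phrased in terms of an arbitrary geodesic ray $\gamma$ and its lift $\tilde\gamma$, whereas in the statement of the Corollary we are implicitly fixing the basepoint $o$ and looking at the specific ray $[o,z)$. This is harmless: by the remark following Lemma \ref{minimizing-almost-minimizing}, the property of being almost minimizing depends only on the ideal endpoint of a ray's lift, so ``$[o,z)$ is almost minimizing'' is a well-defined condition on $z$ alone, and it is precisely the condition $z \in \Lambda_H^c$ appearing in Proposition \ref{ameqnt}(1)$\Leftrightarrow$(3). Likewise Proposition \ref{multi-almost-minimizing} is already stated for the ray $[o,z)$, so the two statements are talking about the same object.

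There is essentially no obstacle here: the Corollary is a formal consequence of the two Propositions, and the substantive content — the forward inclusion $\Lambda_H^c \subset \Lambda_m$ via the Cannon-Thurston trapezoid argument and the reverse inclusion $\Lambda_m \subset \Lambda_H^c$ via the construction of a minimizing ray landing at a given $z \in \Lambda_m$ — has already been carried out in the proof of Proposition \ref{multi-almost-minimizing}. Thus I would simply write: ``Proposition \ref{ameqnt} gives $z \in \Lambda_H^c$ iff $[o,z)$ is almost minimizing, while Proposition \ref{multi-almost-minimizing} gives $z \in \Lambda_m$ iff $[o,z)$ is almost minimizing; the two together give $\Lambda_H^c = \Lambda_m$.''
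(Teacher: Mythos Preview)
Your proof is correct and follows exactly the same approach as the paper: the corollary is obtained by combining Propositions \ref{ameqnt} and \ref{multi-almost-minimizing}, and the paper's own ``proof'' is simply the line ``Combining Propositions \ref{ameqnt} and \ref{multi-almost-minimizing}, we get:''. Your additional remark about basepoint independence is a helpful clarification but not strictly necessary.
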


\begin{cor}
For each degenerate end $E$ with incompressible (in $M$)
 boundary $S$,  there is an $\R-$tree $\TT_E (\subset \Lambda)$ dual to $\LL_E$ parametrizing 
the lifts of almost minimizing geodesics exiting $E$. Hence $\Lambda_H^c$ is a disjoint union of
$\TT_E$'s -- one for each lift of $E$ as $E$ ranges over the degenerate ends of $M$.
\end{cor}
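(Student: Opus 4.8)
The plan is to combine Corollary~\ref{horo-mult} ($\Lambda_H^c=\Lambda_m$) with the precise description of the Cannon--Thurston map in Theorem~\ref{ctstr} and the standard construction of the $\R$-tree dual to a minimal filling measured lamination. Fix a degenerate end $E$, a lift $\til E\subset\til M$ of $E$, write $\til S=\partial\til E$, and let $\til\LL_E\subset\til S$ be the preimage of the ending lamination $\LL_E$. Let $\EE_E\subset\partial\til S=S^1_\infty$ be the set of endpoints of leaves of $\til\LL_E$, and set $\TT_E:=\partial\hat i(\EE_E)\subset\bS^2=\Lambda$. I will establish: (i) $\TT_E$ is canonically the $\R$-tree dual to $\LL_E$; (ii) $z\mapsto[o,z)$ identifies $\TT_E$ with the set of (asymptotic classes of) rays in $\til M$ that exit $\til E$ and have almost minimizing projection; (iii) $\Lambda_H^c$ is the disjoint union of the $\TT_{\til E}$ as $\til E$ ranges over all lifts of all degenerate ends of $M$.

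For (i): by Thurston \cite{thurstonnotes} (cf.\ \textsection\ref{ends}) $\LL_E$ is minimal and filling, the components of $S\ssm\LL_E$ are finite-sided ideal polygons, and $\LL_E$ supports a transverse measure $\mu$. Collapsing each complementary region of $\til\LL_E$ and measuring transverse arcs by $\mu$ produces the dual $\R$-tree $T=T(\LL_E,\mu)$ with an isometric $\pi_1(S)$-action, whose underlying set and decomposition do not depend on $\mu$. Sending $\xi\in S^1_\infty$ to the limit in the completion $\overline T$ of $\bar x_n$ for $x_n\to\xi$ in $\til S$ gives a map $\phi\colon S^1_\infty\to\overline T$ which sends a leaf endpoint to the point representing its leaf, a polygon vertex to the point representing that polygon, and satisfies $\phi(\xi)=\phi(\xi')$ exactly when $\xi,\xi'$ are the two endpoints of a common leaf of $\LL_E$ or vertices of a common complementary polygon. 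By Theorem~\ref{ctstr} these coincide with the fibres of $\partial\hat i|_{S^1_\infty}$; hence $\partial\hat i$ factors through $\phi$ and induces an injection of $\phi(S^1_\infty)$ into $\Lambda$. Under this identification $\TT_E=\partial\hat i(\EE_E)$ corresponds to $\phi(\EE_E)$, a dense sub-$\R$-tree of $\overline T$ whose completion is $\overline T$; this is the sense in which $\TT_E$ is the tree dual to $\LL_E$ (equivalently, the dendrite $S^1_\infty/\!\!\sim$, metrized by $\mu$).

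For (ii) and (iii): a ray in $\til M$ with almost minimizing projection is, by the remark after Lemma~\ref{minimizing-almost-minimizing}, determined up to asymptotic equivalence by its ideal endpoint; by Proposition~\ref{multi-almost-minimizing} that endpoint lies in $\Lambda_m$, and the forward direction of the proof of Proposition~\ref{multi-almost-minimizing} produces a leaf of the ending lamination of the very end being exited whose $\partial\hat i$-image is that endpoint -- so if the ray exits $\til E$ its endpoint lies in $\TT_E$. Conversely, for $z\in\TT_E$ we have $z=\partial\hat i(a)$ with $(a,b)$ a leaf of $\til\LL_E\subset\til S$ and $\partial\hat i(b)=z$ as well (Theorem~\ref{ctstr}); the reverse direction of the proof of Proposition~\ref{multi-almost-minimizing}, run with closed geodesics on $\partial E$, produces a minimizing ray with endpoint $z$ that exits $\til E$. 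This gives the bijection in (ii). Then $\Lambda_H^c=\Lambda_m$ by Corollary~\ref{horo-mult}, and (ii) gives $\Lambda_m=\bigcup_{\til E}\TT_{\til E}$. Disjointness holds because, by Theorem~\ref{ctstr}, $(\partial\hat i)^{-1}(z)$ consists of the endpoints of a single leaf or polygon of a single ending lamination, realized inside one $\til S=\partial\til E$; and distinct lifts of ends (of a single $E$ and of distinct ends) have disjoint traces on $\bS^2$, since with no parabolics the closures in $\overline{\Hyp^3}$ of distinct components of the preimage of an end, and of preimages of distinct ends, are disjoint at infinity.

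The \emph{main obstacle} is part (i): defining and controlling the collapsing map $\phi$ -- that the defining limit exists and is independent of the approximating sequence, that $\phi$ is continuous, and that its fibres match Theorem~\ref{ctstr} -- together with pinning down in what sense $\TT_E$, as a subset of $\Lambda$ with the subspace topology, "is" the dual $\R$-tree (dense sub-tree versus completion). The needed inputs are precisely Thurston's description of $\LL_E$ as a genuine minimal filling lamination with finite-sided complementary polygons \cite{thurstonnotes} and finiteness of the transverse measure across nested families of leaves converging to a point of $S^1_\infty$ -- the same ingredients already used in Lemma~\ref{endpts}. The remaining items, the bijection in (ii) and the disjointness in (iii), are routine given Corollary~\ref{horo-mult}, Theorem~\ref{ctstr}, and the no-parabolics hypothesis.
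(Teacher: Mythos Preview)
Your approach is essentially the same as the paper's: both use Corollary~\ref{horo-mult} to replace $\Lambda_H^c$ by $\Lambda_m$, invoke Theorem~\ref{ctstr} to identify the fibres of $\partial\hat i$ with leaves and complementary polygons of $\til\LL_E$, and then realize the resulting quotient as the dual $\R$-tree. The paper's proof is a two-line sketch (``collapse leaves and ideal complementary polygons to points''), whereas you spell out the collapsing map $\phi$, the bijection with almost minimizing rays exiting a given lift, and the disjointness of the pieces indexed by lifts of ends. In particular, you flag two points the paper leaves implicit: the precise sense in which the image $\TT_E\subset\Lambda$ is the dual tree (your dense-sub-tree-versus-completion remark), and why distinct lifts of ends contribute disjoint pieces. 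Your treatment of these is correct and more careful than the paper's, but it is not a different argument.
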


\begin{proof} By Proposition \ref{horo-mult}, it suffices to obtain a description of $\Lambda_m$. 
Also, by Proposition \ref{horo-mult}, and Theorem \ref{ctstr}, $\Lambda_m$ is the set of equivalence classes
in $\LL_E$, where $a, b \in S^1$ are equivalent if they are end-points of a leaf of $\LL_E$
or ideal end-points of a complementary ideal polygon. By joining all such 
$a, b$ by bi-infinite geodesics, and collapsing
leaves and ideal complementary polygons down to points, it follows
that $\Lambda_m$ is the dual $\R-$tree $\TT_E$ to $\LL_E$. The last statement is an immediate consequence.
\end{proof}

\section{Building Blocks and Model Geometries}\label{models}

Having discussed which geodesics are almost-minimizing as opposed to merely exiting in Section \ref{hll},
it
remains to discuss conditions guaranteeing thickness or thinness of
 almost minimizing geodesics in $E$.
 We have already seen examples in Subsections \ref{thick} and \ref{thin}
where all almost minimizing geodesics are thin and an example
where all almost minimizing geodesics are thick.  The purpose of the rest of this paper is to furnish
conditions in special cases and explore the limitations of these conditions. As the examples of 
subsection \ref{thick} and \ref{thin} indicate, the geometry of building blocks plays a crucial role.
To proceed further,
 we pick up model geometries of ends of hyperbolic 3-manifolds following \cite{minsky-bddgeom, minsky-elc1, minsky-elc2, mahan-bddgeo, mahan-ibdd, mahan-amalgeo, mahan-split} one by one and discuss the behavior
of almost minimizing geodesics
 for each.

In what follows in this section we shall describe different kinds of models for building blocks
of $E$: thick, thin, amalgamated.
Each building block is homeomorphic to $S \times [0,1]$, where $S$ is a closed
surface of genus greater than one. What is common to all these three model building blocks
is that the top and bottom boundary components are uniformly bi-Lipschitz to a fixed hyperbolic $S$.
In the next section, a more general model geometry will be described and almost minimizing geodesics in it 
will be analyzed.
\begin{defn} 
A model $E_m$ is said to be built up of blocks of some prescribed geometries {\bf glued end to end}, if 
\begin{enumerate}
\item 
$E_m $ is homeomorphic to $S \times [0, \infty)$
\item There exists $L \geq 1$ such that $S \times [i, i+1]$ is  $L-$bilipschitz to a block of one of the prescribed
geometries
\end{enumerate} 
$S \times [i, i+1]$ will be called the {\bf $(i+1)$th block} of the model $E_m$.

The {\bf thickness} of the \bf $(i+1)$th block is the length of the shortest path between $S \times \{ i \}$
and $S \times \{ i+1 \}$ in $S \times [i, i+1] (\subset E_m)$.
\end{defn}
 
\subsection{Bounded Geometry} 

\begin{defn}
An end $E$ of a hyperbolic $M$ has {\bf bounded geometry} 
\cite{minsky-bddgeom, minsky-jams} if there is a (uniform)
lower bound for lengths of closed geodesics in $E$.
\end{defn}

 Since $E$ itself has bounded geometry, it follows that any
exiting geodesic is thick. We note this as follows for future use:

\begin{lemma} Let $E$ be of bounded geometry. Then every exiting geodesic is thick. In particular
every almost minimizing geodesic is thick. \label{bddgeo} \end{lemma}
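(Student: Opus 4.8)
The plan is to reduce this to the standard thick--thin decomposition, after which the statement is essentially immediate. First I would record the elementary fact that the hypothesis ``$E$ has bounded geometry'', i.e.\ a lower bound $\ell_0>0$ for the lengths of closed geodesics carried by $E$, forces a uniform positive lower bound on the injectivity radius over $E$. Let $\mu>0$ be the Margulis constant in dimension $3$ and suppose $x\in E$ has $\mathrm{Inj}(x)<\mu/2$. Then there is an essential loop based at $x$ of length $<\mu$, so by the Margulis lemma $x$ lies in a Margulis tube $T_\sigma$ about some short closed geodesic $\sigma$ (there are no cusps, since $M$ has no parabolics). Since $T_\sigma$ is connected and meets $E$, its core curve $\sigma$ is freely homotopic into $E$, so $\ell(\sigma)\ge \ell_0$ by hypothesis; and the injectivity radius inside a Margulis tube is bounded below by half the length of its core curve (the minimum of $\mathrm{Inj}$ over $T_\sigma$ is attained along the core and equals $\ell(\sigma)/2$), so $\mathrm{Inj}(x)\ge \ell(\sigma)/2\ge \ell_0/2$. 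In all cases $\mathrm{Inj}(x)\ge \epsilon_0:=\min\{\mu/2,\ \ell_0/2\}>0$ for every $x\in E$.

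Next, if $\gamma$ is an exiting geodesic ray in $E$ then $\gamma([T,\infty))\subset E$ for some $T$, so $\mathrm{Inj}(\gamma(t))\ge \epsilon_0$ for all $t\ge T$ and hence $\liminf_{t\to\infty}\mathrm{Inj}(\gamma(t))\ge\epsilon_0>0$, i.e.\ $\gamma$ is thick. Finally, an almost minimizing geodesic ray is exiting --- its arclength parameter and its distance from the initial point differ by a bounded amount, so it leaves every compact set --- and once it has entered $E$ the same estimate applies; thus it too is thick.

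There is no genuinely hard step here; the one place asking for a word of care is the passage from the end-theoretic hypothesis (a bound on closed geodesic lengths in $E$) to an honest lower bound on the injectivity radius over all of $E$: one must know that a short geodesic whose fat Margulis tube protrudes into $E$ is itself among the geodesics controlled by the hypothesis, which is exactly the free-homotopy remark above. This lemma is really the base case of the whole analysis; the substantive difficulties only appear for the $i$-bounded, amalgamated and split geometries treated in the later sections.
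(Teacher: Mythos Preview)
Your proof is correct and is essentially the argument the paper has in mind; the paper treats the lemma as immediate, remarking only that ``since $E$ itself has bounded geometry, it follows that any exiting geodesic is thick,'' whereas you have carefully unpacked the passage from a lower bound on closed-geodesic lengths to a uniform injectivity-radius bound via the Margulis lemma. That extra care is appropriate and the free-homotopy remark you flag is exactly the point one should check.
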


A bi-Lipschitz model $E_m$ for $E$ may be described by gluing a sequence of {\bf thick blocks}
end-to-end. We describe below the construction of a thick block, as this will be used in all the model geometries
that follow.

 Let $S$ be a closed hyperbolic surface with a fixed but arbitrary
hyperbolic structure. 

\medskip

\noindent 
{\bf Thick Block}\\ 
Fix a constant $L$ and a hyperbolic surface $S$. Let $B_0 = S \times [0,1]$ be given the product metric.
 If $B$ is $L-$bilipschitz homeomorphic to $B_0$, it is called an {\bf $L-$thick  block}.

The following statement is a consequence of 
\cite{minsky-jams} (see also \cite{mitra-trees, mahan-bddgeo}).

\begin{remark} \label{bddgeo-model} For any bounded geometry end, there exists $L$ such that
$E$ is bi-Lipschitz homeomorphic to a model manifold $E_m$ 
consisting of gluing  $L-$thick blocks end-to-end.
\end{remark}

Later on in this section, we shall omit stating the constant $L$ explicitly, but assume that, given an end $E$,
 this constant
is uniform for thick blocks in $E$.

\subsection{i-bounded Geometry} \label{ibdd}

\begin{defn}
An end $E$ of a hyperbolic $M$ has {\bf i-bounded geometry} \cite{mahan-ibdd}
if the boundary torus of every Margulis tube in $E$ has bounded diameter. 
\end{defn}

An alternate description of i-bounded
geometry can be given as follows.
We start with a closed hyperbolic surface $S$.
  Fix a finite collection $\CC$ of disjoint simple closed geodesics on
$S$ and let
 $N_\epsilon ( \sigma_i )$  denote an $\epsilon$ neighborhood of
$\sigma_i$, ($\sigma_i \in \mathcal{C}$). Here $\epsilon$ is chosen
 small enough so
that no two lifts of  $N_\epsilon ( \sigma_i )$ to the
universal cover $\widetilde{S}$ intersect.

\medskip

\noindent {\bf  Thin  Block}\\
Let $I = [0,3]$. Equip $S \times I$ with the product metric. Let
$B^c = (S \times I - \cup_i N_{\epsilon} ( \sigma_i ) \times
[1,2]$. 
Equip $B^c$ with the induced path-metric.

For each resultant torus component of
the boundary of $B^c$, perform Dehn filling on some $(1,n_i)$ curve (the $n_i$'s may vary from
block to block but we do not add on the suffix for $B$ to avoid cluttering notation),
which goes $n_i$ times around the meridian and once round the
longitude.  $n_i$ will be called the {\bf twist coefficient}. Foliate the torus boundary of
$B^c$ by translates of $(1,n_i)$ curves and arrange so that the 
 solid torus $\Theta_i$ thus glued in is hyperbolic and foliated by totally geodesic 
disks bounding the $(1,n_i)$ curves.
 $\Theta_i$ equipped with this metric will be called a {\bf
  Margulis tube}.

The resulting copy of $S \times I$ obtained, equipped with the metric just
described, is called a {\bf thin  block}. 
The following statement is a consequence of 
 \cite{mahan-ibdd}.

\begin{prop}\label{ibdd-model}
An end $E$ of a hyperbolic 3-manifold $M$ has
 {\bf i-bounded geometry} if and only if
it is bi-Lipschitz homeomorphic to a model manifold $E_m$ 
consisting of gluing  thick and thin blocks end-to-end.
\end{prop}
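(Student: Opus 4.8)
\textbf{Proof plan for Proposition \ref{ibdd-model}.}

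The plan is to establish both directions of the bi-Lipschitz equivalence, with the bulk of the work being the ``only if'' direction, which amounts to showing that an i-bounded end is captured by a thick/thin block decomposition.

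First I would set up the machinery of sweepout surfaces. By the geometric tameness results already quoted (Thurston, Bonahon, Canary) together with the Tameness Theorem, a degenerate end $E \approx S \times [0,\infty)$ carries a sequence of maps $f_n : S \to E$ with uniformly bounded area whose images exit every compact set. The idea is to interpolate: between two consecutive well-chosen sweepout levels one obtains a product region $S \times [i,i+1]$, and the i-bounded hypothesis forces each such region to have a controlled geometry. The key input is that the i-bounded condition (bounded diameter of every Margulis tube torus) prevents the ``width'' of the thin parts from blowing up inside a block; combined with the \emph{a priori} bound on injectivity radius in the thick part coming from the Margulis Lemma plus the fixed topological type of $S$, one gets a uniform upper bound on the thickness of each block.

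Next I would carry out the block-by-block bi-Lipschitz construction. For the $(i+1)$th block: if there are no short geodesics, the region is a product of bounded geometry and is uniformly bi-Lipschitz to a thick block $S \times [0,1]$ (this is essentially Minsky's bounded-geometry argument, cf.\ Remark \ref{bddgeo-model} and \cite{minsky-jams, mahan-bddgeo}). If there are Margulis tubes, one excises $\epsilon$-neighborhoods of the core geodesics and uses the fact that each such tube in an i-bounded end has a boundary torus of bounded diameter; the twisting $(1,n_i)$ parameter is read off from the holonomy of $M$ around the core curve. One then checks that the complement $B^c$ (the block minus the tube neighborhoods, with its path metric) is uniformly bi-Lipschitz to the model $B^c$ built above, and that gluing back in the model Margulis tubes $\Theta_i$ — which are hyperbolic solid tori foliated by totally geodesic meridian disks — recovers the actual geometry near the short geodesics up to bounded distortion. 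Assembling these block-wise bi-Lipschitz maps and checking they can be made to agree on the overlapping boundary surfaces (arranged bi-Lipschitz to the fixed $S$) yields the global bi-Lipschitz homeomorphism $E_m \to E$. For the converse (``if'') direction, one simply observes that in a manifold built by gluing thick and thin blocks end-to-end, every Margulis tube is, up to the fixed bi-Lipschitz constant, one of the model tubes $\Theta_i$ glued in from a block, and these have boundary tori of diameter bounded in terms of the fixed surface $S$ and the bi-Lipschitz constant $L$ — hence the end is i-bounded.

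The main obstacle I expect is \emph{uniformity}: ensuring a single constant $L$ works for all blocks simultaneously. The choice of sweepout levels, the bound on block thickness, and the bi-Lipschitz constants of the block models must all be controlled independently of $i$. This is exactly where one must invoke the quantitative statement of \cite{mahan-ibdd} rather than reprove it from scratch; I would cite that the i-bounded hypothesis yields a \emph{uniform} model, and spend the effort only on translating between the ``bounded Margulis tube diameter'' phrasing and the ``thick/thin block'' phrasing. A secondary technical point is the matching of block-wise maps along their common boundary surface, which requires choosing the sweepout surfaces to be uniformly bi-Lipschitz to the reference hyperbolic $S$ — again available from the bounded-area geometric tameness input after a small isotopy.
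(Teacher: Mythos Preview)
The paper does not actually prove this proposition: immediately before the statement it says ``The following statement is a consequence of \cite{mahan-ibdd}'' and then moves on, treating the result as a black box imported from that reference. There is no \texttt{proof} environment for Proposition~\ref{ibdd-model} in the paper.

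Your proposal therefore does considerably more than the paper attempts. The outline you give is broadly sensible --- the ``if'' direction is indeed immediate (the model tubes $\Theta_i$ have boundary tori of uniformly bounded diameter by construction), and for the ``only if'' direction the governing idea is correct: bounded Margulis-torus diameter controls the geometry of each block. You are also right to flag uniformity of the bi-Lipschitz constant across blocks as the crux, and you explicitly defer that to \cite{mahan-ibdd}, which is exactly what the paper does. So in the end your treatment and the paper's coincide: both point to the same external reference for the substantive argument.

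One cautionary remark on your sketch itself: the passage from ``sweepout surfaces $f_n$ with bounded area exiting $E$'' to ``embedded product regions $S\times[i,i+1]$ between consecutive levels'' is not automatic --- pleated or simplicial hyperbolic surfaces need not be embedded, and extracting an honest bi-Lipschitz product structure between them is real work (this is part of what the Minsky model machinery, summarized in the paper's Appendix, is for). If you were to flesh this out rather than cite \cite{mahan-ibdd}, that step would need more than interpolation.
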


The figure below illustrates a model $E_m$, where
the black squares denote Margulis tubes and the (long) rectangles without black squares
represent thick blocks.

\smallskip

\begin{center}

\includegraphics[height=4cm]{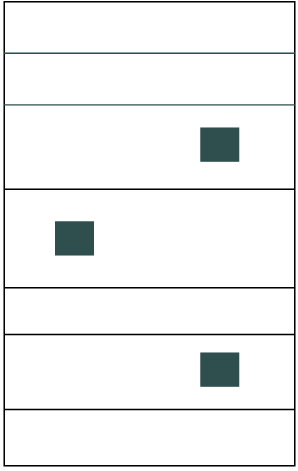}

\smallskip

 {\it Model of i-bounded geometry (schematic)}

\end{center}

\smallskip

\begin{prop} Let $E$ be of i-bounded geometry. Then there exist thin exiting geodesics. However,
every almost minimizing 
geodesic is thick. \label{ibddgeo} \end{prop}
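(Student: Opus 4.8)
\textbf{Proof proposal for Proposition \ref{ibddgeo}.}

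The plan is to treat the two assertions separately, the first being essentially a matter of producing an explicit thin exiting ray, and the second requiring an estimate showing that any geodesic which dips deep into a Margulis tube cannot be almost minimizing.

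For the first assertion (existence of thin exiting geodesics): since $E$ has i-bounded geometry and is degenerate, there must be infinitely many thin blocks with twist coefficients $n_i$ going to infinity (otherwise the geometry would be bounded, and one would not produce unboundedly short geodesics — but the end is required here only to have some thin behaviour; in the genuinely thin case these coefficients are unbounded). Using the model $E_m$ of Proposition \ref{ibdd-model}, I would build a path that enters successive Margulis tubes $\Theta_i$, wraps around the core geodesic $\sigma_i$ enough times to get within distance (say) $1$ of $\sigma_i$, and exits, concatenating with vertical segments through the intervening thick blocks. Choosing the wrapping numbers so that this concatenation lifts to a quasigeodesic with uniform constants in $\til E$ (the same concatenation technique as in the proof of Proposition \ref{hnotc}), the geodesic tracking it is exiting and visits points of injectivity radius $\to 0$, hence is thin. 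One must check properness/exiting of the path, which follows since each successive piece lies in a higher block of the model.

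For the second assertion (every almost minimizing geodesic is thick): this is the heart of the argument and is a refinement of the estimate in Subsection \ref{thick}. Suppose, for contradiction, that $\gamma$ is $C$-almost minimizing but thin, so there is a sequence of times $t_k \to \infty$ with $\operatorname{Inj}(\gamma(t_k)) \to 0$; thus $\gamma(t_k)$ lies arbitrarily deep inside a Margulis tube $T_k$ about a short geodesic $\sigma_k$. The key geometric input is the i-bounded hypothesis: the boundary torus $\partial T_k$ has diameter bounded by some uniform $D$. Now consider the minimizing segment from $S = \partial E$ to $\gamma(t_k)$; I would compare $d_M(\gamma(0),\gamma(t_k))$ with $d_M(\gamma(0), \partial T_k) + d_M(\partial T_k, \gamma(t_k))$. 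Since $\partial T_k$ has diameter $\le D$ and $\gamma(t_k)$ is at depth $\to \infty$ inside $T_k$, basic hyperbolic geometry of the Margulis tube (the logarithmic law: a geodesic chord that goes depth $\ell$ into the tube has length comparable to $\ell$, whereas a path along/near $\partial T_k$ connecting the same entry/exit points has bounded length) shows that $\gamma$, which must enter and leave $T_k$ (it is exiting, so it cannot stay in $T_k$), accumulates a definite deficit: there is a shortcut from a point of $\gamma$ before entering $T_k$ to a point after leaving $T_k$ of length $\le D + O(1)$, while $\gamma$ itself spends length $\ge 2(\text{depth}) \to \infty$ between these points. Hence $|d_M(\gamma(0),\gamma(t)) - t|$ is unbounded, contradicting $C$-almost minimizing.

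The main obstacle I anticipate is making the shortcut argument precise: one must verify that $\gamma$ genuinely goes deep into $T_k$ on \emph{both} sides (i.e. it does not merely graze it — but since $\operatorname{Inj}(\gamma(t_k))\to 0$ forces $\gamma(t_k)$ to be at large depth, and $\gamma$ is proper hence eventually leaves $T_k$, the intermediate value argument on depth gives this), and one must quote or reprove the quantitative statement that, in a Margulis tube, two points on the boundary torus at distance $\le D$ from each other are joined by an arc of bounded length while the geodesic between them (if it dives to depth $\ell$) has length $\ge 2\ell - O(1)$. This is standard (compare the treatment in Subsection \ref{thick} via Claim \ref{horo}, where the distance on a horosphere grows only logarithmically with the chordal distance), and the i-bounded hypothesis is exactly what guarantees the uniform bound $D$ on the relevant boundary diameters across all blocks. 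Assembling these estimates and letting $\ell \to \infty$ yields the contradiction and completes the proof.
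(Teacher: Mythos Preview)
Your proposal is correct and follows the same underlying mechanism as the paper, though the packaging differs slightly.

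For the first assertion, your construction is essentially identical to the paper's: concatenate minimizing segments between successive Margulis tubes with long winding pieces inside the tubes, observe this lifts to a uniform quasigeodesic, and conclude the tracking geodesic is thin and exiting. Your caveat about needing the twist coefficients to be unbounded (equivalently, the end not to be of bounded geometry) is a valid point that the paper leaves implicit.

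For the second assertion, the paper does \emph{not} argue by contradiction through a single deep tube. Instead it works block by block: since each block boundary $S\times\{i\}$ has uniformly bounded diameter and there is always a thick vertical path of length $\le 3L$ through any (thick or thin) block avoiding the tubes, any two points on consecutive block boundaries are joined by a path of length $\le L_0 = 2D+3L$. Hence the portion $\lambda_i$ of an almost minimizing $\lambda$ between its first and last hits on consecutive block boundaries has length $\le L_0+C$, which directly caps the depth $\lambda$ can reach in any tube. Your argument instead uses the defining property of i-bounded geometry (uniform bound on $\operatorname{diam}\partial T$) to produce a shortcut across the tube boundary and derive a contradiction. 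Both arguments encode the same inequality --- a uniformly bounded bypass versus an arbitrarily long deep excursion --- and both are valid; the paper's version is slightly more direct in that it gives an explicit a priori bound on each block-segment rather than a contradiction, and it leans on the model description (Proposition~\ref{ibdd-model}) rather than the torus-diameter definition. Your version has the minor advantage of invoking the i-bounded hypothesis in its original form, without passing through the block model.
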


\begin{proof}
{\bf Existence of thin exiting geodesics:} The proof of this is similar to Proposition \ref{hnotc}.
 Let $B_{n_i}$ be the thin blocks
with $n_i < n_{i+1}$. 
Let $T_{n_i}$ and $T_{n_{i+1}}$ be  Margulis tubes in these blocks.
 We choose thick minimizing geodesics $\lambda_i$ between $T_{n_i}$ and $T_{n_{i+1}}$, so that the length of 
 $\lambda_i$ is given by $l(\lambda_i)= d(T_{n_i}, T_{n_{i+1}})$.

Now consider long geodesic paths $\mu_i \subset T_{n_i}$ in Margulis tubes winding $m_i$
times around the core of $ T_{n_i}$, where $l(\mu_i) \to \infty$
as $i \to \infty$. The rest of the proof is as in 
Proposition \ref{hnotc}.  The concatenation $\bigcup_i (\lambda_i\cup \mu_i)$ is exiting
and lifts to a (uniform) exiting quasigeodesic
which is thin. Hence the exiting geodesic that tracks it is thin.\\

\noindent 
{\bf Thickness  of almost minimizing geodesics:}  Let $\lambda$ be an almost minimizing geodesic
and $\lambda_i' = \lambda \cap B_i$ be the piece(s) of it within the block $B_i$. Let $\lambda_i$
be the geodesic subsegment of $\lambda$ between the first intersection point of $\lambda$
with $S \times \{ i \}$ and the last intersection point of $\lambda$
with $S \times \{ i \}$. Since there exists
$L \geq 1$ such that each $S \times \{i\}$
is $L-$bilipschitz homeomorphic to $S$, it follows that there exists $L_0$ such that
for  any $p_i \in S \times \{i\}$
and
$p_{i+1} \in S \times \{i\}$, there exists a path of length at most $L_0$ joining $p_i$ to 
$p_{i+1}$. To see this, choose $x \in S_i \setminus (\bigcup_j N_{\epsilon} ( \sigma_{i_j} ))$, where the  
$\sigma_{i_j}$'s correspond to the Margulis tubes in $B_{i+1}$. Then $x \times I$ is a thick path
of length at most $3L$, where $x \times \{ 0 \}$ corresponds to $x \in S_i$ and 
$x \times \{ 3 \}$ lies on $S_{i+1}$. Since  $S_i$'s are of bounded geometry, i.e.\ there exists $D > 0$
such that the diameter of $S_i$ is bounded by $D$ for all $i$, it follows that there is a path between
$p_i$ and $p_{i+1}$ of length at most $(2D + 3L)$. We choose $L_0 = (2D + 3L)$.

Since $\lambda$ is almost minimizing, there exists $C \geq 0$ such that $l(\lambda_i ) \leq (L_0 +C)$
for all $i$.  In particular, $\lambda_i$ cannot go arbitrarily deep into the Margulis tube $T_i$
in case $B_i$ is thin. It follows that $\lambda$ is thick.
\end{proof}

\subsection{Amalgamation Geometry}
As before, we
start with a closed hyperbolic surface $S$. An amalgamated geometry block is similar to a thin block, except that
we impose no control on the geometry of $S\times [1,2] \setminus (\bigcup_j N_{\epsilon} ( \sigma_{i_j} )\times
[1,2])$.

\begin{defn} {\bf Amalgamated  Block}
{\rm
As before $I = [0,3]$. We will describe a geometry on $S \times I$. 
 There
exist $\epsilon, L$ (these constants will be uniform over blocks of the model $E_m$)
such that 
\begin{enumerate}
\item $B = S \times I$. Let $K = S \times [1,2]$ under the identification of $B$ with 
$S \times I$.
\item We call $K$ the geometric core.
In its intrinsic path metric, it is  $L-$bilipschitz to a 
  convex hyperbolic manifold with boundary consisting of surfaces L-bilipschitz to a fixed hyperbolic
surface. It follows that there exists $D>0$ such that
 the diameter of $S \times \{
  i \}$ is bounded above by $D$ (for $i = 1,2$). 
\item There exists a simple closed multicurve  on $S$ each component of which has a geodesic realization
 on $S \times \{ i
  \} $ for  $ i = \{ 1,2\}$ with (total) length at most
 $\epsilon_0$. Let $\gamma$ denote its geodesic realization in $K$.
\item There exists a regular neighborhood $N_k ( \gamma ) \subset K$
  of $\gamma$ which is homeomorphic to a union of disjoint solid tori, such that $N_k
  (\gamma ) \cap S \times \{ i \}$ is homeomorphic to a union of disjoint open annuli
  for $i = 1, 2$. Denote $N_k (\gamma )$ by
  $T_\gamma$ and call it the Margulis tube(s) corresponding to $\gamma$.  
\item $S \times [0,1]$ and $S \times [2,3]$ are given the product
  structures corresponding to the bounded geometry structures on $S
  \times \{ i \}$, for $i = 1,2 $ respectively. 
\end{enumerate} }
\end{defn}

In \cite{mahan-amalgeo} we had imposed further restrictions on the geometry of the geometric core $K$.
But for the purposes of this paper, the above is enough.

\begin{defn}\label{amalgeo-model}
An end $E$ of a hyperbolic 3-manifold $M$ has
 {\bf amalgamated geometry} if 
it is bi-Lipschitz homeomorphic to a model manifold $E_m$ 
consisting of gluing  thick and amalgamated blocks end-to-end.
\end{defn}

\begin{remark} Note that with the above Definition, i-bounded geometry becomes a special case of amalgamated geometry.
The difference is that amalgamated geometry imposes no conditions on the geometry of the complement $K - T_\gamma$.
The component of $K - T_\gamma$ shall be called {\bf amalgamation components} of $K$.
\end{remark}

The figure below illustrates schematically what the model looks
like. Filled squares correspond to solid tori along which amalgamation
occurs. The adjoining piece(s) denote amalgamation blocks of $K$.
 The blocks which have no filled squares are the
{\it thick blocks} and those with filled squares are the {\it amalgamated
  blocks}

\smallskip

\begin{center}

\includegraphics[height=4cm]{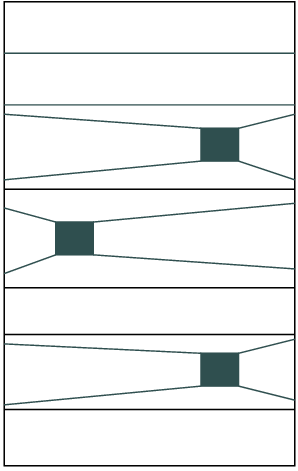}

\smallskip

 {\it Model of amalgamated geometry (schematic)}

\end{center}

\subsubsection{Almost minimizing geodesics} 
Recall that the thickness of the $(i+1)$th block is the length of the shortest path between $S \times \{i\}$ and
$S \times \{i+1\}$ in $S \times [i,i+1]$.
\begin{lemma}\label{amalgeo-suff}
Any almost minimizing geodesic in an amalgamated geometry end is thick if  all amalgamated
blocks have bounded thickness. 
\end{lemma}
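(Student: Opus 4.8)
The idea is to compare, along an almost minimizing geodesic $\lambda$, the pieces $\lambda_i := \lambda \cap (S\times[i,i+1])$ block by block, exactly as in the proof of Proposition \ref{ibddgeo}, and to use the hypothesis of bounded thickness to get a uniform upper bound on $l(\lambda_i)$. First I would fix, for each block $S\times[i,i+1]$, the geometric core $K_i = S\times[i,i+1/2\ldots]$ (in the amalgamated case) together with its Margulis tube(s) $T_{\gamma_i}$ and the amalgamation component(s) of $K_i - T_{\gamma_i}$. The bounded-thickness hypothesis says there is a uniform $L_1$ and, for every $i$, a path of length $\le L_1$ joining $S\times\{i\}$ to $S\times\{i+1\}$ inside the $i$th block. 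Combining this with the uniform bound $D$ on the diameters of the bounding surfaces $S\times\{i\}$ (which comes from the uniform bilipschitz control built into the definition of thick and amalgamated blocks), I get a constant $L_0 = L_0(D,L_1)$ such that any two points $p_i \in S\times\{i\}$, $p_{i+1}\in S\times\{i+1\}$ are joined by a path of length $\le L_0$ that stays thick (i.e.\ avoids the deep parts of the Margulis tubes), and more generally any two points on two consecutive boundary surfaces $S\times\{i\}, S\times\{i'\}$ with $|i-i'|\le 1$ are joined by a thick path of length $\le L_0$.

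\textbf{Key step: the almost-minimizing bound forces $l(\lambda_i)$ bounded.} Let $\lambda$ be $C$-almost minimizing, and let $\lambda_i$ be the subsegment of $\lambda$ between its first and last intersection with $S\times\{i\}$ (as in Proposition \ref{ibddgeo}); outside of these the geodesic is monotone in the $[i,\infty)$ direction. Suppose for contradiction that some $\lambda_i$ (or some piece of $\lambda$ inside the $i$th block) goes arbitrarily deep into the Margulis tube $T_{\gamma_i}$, so that $l(\lambda\cap (S\times[i,i+1]))$ is very large. Then I would short-cut: replace the excursion of $\lambda$ into the $i$th block by a thick path of length $\le L_0$ joining the entrance and exit points of $\lambda$ on $S\times\{i\}$ (or $S\times\{i+1\}$), using the path constructed above. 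This produces a path from $\gamma(0)$ to $\gamma(t)$ (for appropriate large $t$, namely after $\lambda$ has passed beyond the $i$th block) that is shorter than $t-C$, contradicting $C$-almost minimizing. Hence there is a constant $C' = C'(C,L_0)$ with $l(\lambda\cap(S\times[i,i+1])) \le C'$ for all $i$. In particular $\lambda$ cannot enter the $\eps_0$-thin core of any Margulis tube for $\eps_0$ smaller than a threshold depending only on $C'$ (since entering depth $\delta$ into a Margulis tube costs length growing — at least linearly in $1/\eps_0$, or in any case unboundedly as $\eps_0\to 0$). Therefore $\liminf_t \mathrm{Inj}(\lambda(t)) > 0$, i.e.\ $\lambda$ is thick.

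\textbf{The main obstacle.} The delicate point is the short-cutting argument: I must make sure the replacement thick path genuinely lands me in a contradiction with almost-minimization, which requires the short-cut path to connect two points that $\lambda$ actually visits \emph{and} to be compared against the right portion of $\lambda$. The subtlety is that $\lambda_i$ may re-enter and leave the $i$th block several times; one handles this by taking the outermost entrance/exit points on $S\times\{i\}$ and on $S\times\{i+1\}$ and noting that everything $\lambda$ does between them lies within blocks $i-1, i, i+1$ (or one simply argues block by block, bounding the total length of $\lambda$ in each block). One also needs that a minimizing path between two points of $S\times\{i\}$ can be taken to be \emph{thick}, i.e.\ to avoid the Margulis tubes of the adjacent blocks — this is where the explicit product structure of $S\times[0,1]$ and $S\times[2,3]$ in the amalgamated block, and the uniform diameter bound $D$, are used (as in the proof of Proposition \ref{ibddgeo}): pick a point $x\in S$ missing all the relevant $N_\epsilon(\sigma_{i_j})$, run $x\times[0,1]$ (a thick path of length $\le L$), and connect within the bounded-diameter surfaces. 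Once this thick-path construction is in place, the rest is a bookkeeping of constants; the bounded-thickness hypothesis is precisely what guarantees $L_0$ is uniform over all blocks, which is indispensable since $E$ has infinitely many blocks.
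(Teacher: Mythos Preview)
Your proposal is correct and follows essentially the same approach as the paper: bound the distance between consecutive boundary surfaces by $L_0 = D_0 + 2D$ (thickness bound plus twice the surface diameter), use $C$-almost minimizing to force $l(\lambda_i)\le L_0+C$, and conclude that $\lambda$ cannot penetrate deep into any Margulis tube. Your ``main obstacle'' discussion is over-cautious---there is no need for the short-cut path itself to be thick, since the argument only uses its \emph{length} to bound $l(\lambda_i)$, and it is the bounded length of the geodesic subsegment (not of the comparison path) that prevents deep excursions into the tube.
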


\begin{proof} 
The proof is similar to the second part of Proposition \ref{ibdd-model}.

Let $\lambda$ be an almost minimizing geodesic
and $\lambda_i$
be the geodesic subsegment of $\lambda$ between the first intersection point of $\lambda$
with $S \times \{ i \}$ and the last intersection point of $\lambda$
with $S \times \{ i +1 \}$. If each $B_i$ has thickness bounded by $D_0$ then
as in the proof of Proposition \ref{ibdd-model},   there exists $L_0$ such that
for  any $p_i \in S \times \{i\}$
and
$p_{i+1} \in S \times \{i+1\}$ there is a path of length at most $L_0$ joining $p_i$ to 
$p_{i+1}$. To see this, note first that there is a path of length at most $D_0$ from 
$S \times \{ i \}$ to  $S \times \{ i +1 \}$. Further, since each $S \times \{ i \}$ has diameter bounded
by some $L$, $L_0 = D_0 +2L$  will suffice.

Since $\lambda$ is almost minimizing, there exists $C \geq 0$ such that $l(\lambda_i ) \leq (L_0 +C)$
for all $i$.  In particular, $\lambda_i$ cannot go arbitrarily deep into the Margulis tube $T_i$
in case $B_i$ is an amalgamated block. It follows that $\lambda$ is thick.
\end{proof}

\section{Counterexamples: Bounded thickness neither necessary nor sufficient}\label{ctreg} The converse to Lemma \ref{amalgeo-suff}   is not true. In particular, as we shall see 
in Example \ref{unbddth} below, 
it is possible to have all almost minimizing geodesics thick in manifolds of amalgamation
geometry even in the presence of arbitrarily thick
amalgamation blocks. Further,
for more general geometries of ends ((than amalgamated geometry), bounded thickness of blocks is not sufficient 
to guarantee that almost minimizing geodesics are thick.
In Section \ref{ctreg-split} we shall provide a counterexample.
Thus, for general geometries, bounded thickness of blocks is neither necessary nor sufficient to ensure thickness 
of almost minimizing geodesics.
For both these counterexamples, we shall need the more general technology of split geometry.
 This is summarized in the Appendix to the paper (Section \ref{splitt}).
For convenience of the
reader we shall refer to specific sections of the Appendix that are used.

\subsection{Example: Converse  to
Lemma \ref{amalgeo-suff} is false}\label{unbddth} We
 furnish here a counterexample to the converse direction to
Lemma \ref{amalgeo-suff} as follows. We shall proceed in two steps:
\begin{enumerate}
\item Construct an amalgamation geometry
block.
\item Describe how to glue a sequence of such blocks together.
\end{enumerate}

The gluing method will construct for us a hierarchy, which suffices (\cite{minsky-elc2}, see
Theorem \ref{bilipmodel} in the Appendix)
to furnish the example we seek.

\subsubsection{The Amalgamation Geometry Block}\label{agblock}
Instead of constructing a complete amalgamation geometry
block, we shall describe the construction only in an amalgamation component. See figure below.

\smallskip

\begin{center}

\includegraphics[height=4cm]{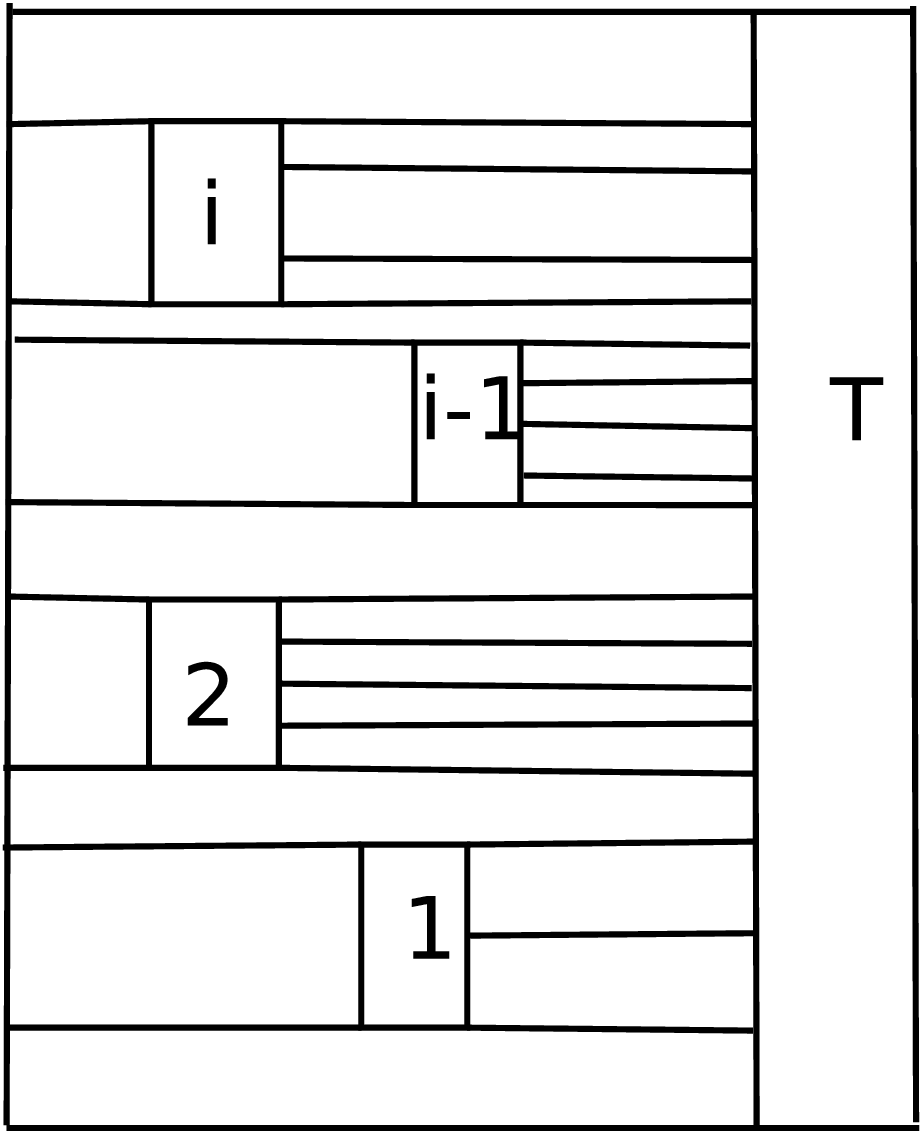}

\smallskip

\end{center}

\smallskip

Let $K$ be an amalgamation component (homeomorphic to $S_K \times I$)
 bounding a thin Margulis tube $T$ which begins and ends at bounded geometry 
surfaces. The left vertical boundary of
 $T$  has length $\sum_{j=1}^i (m_j + n_j)$ corresponding to Minsky blocks
(Section \ref{mb}) abutting on it. 
We shall say below what the $m_j, n_j$ are.

Let $v$ be the curve corresponding to $T$ on the surface $S$.
There are $  i (= i(K))$ hierarchy curves $v_1, v_2, \cdots , v_{i-1}, v_i$ corresponding to Margulis tubes
$T_1, T_2, \cdots , T_{i-1}, T_i$ (labeled $1,2, \cdots , i-1, i$ in the figure above) inside the amalgamation component.

$S_K \setminus \{ v_j \}$ has two components $W_j$ and $V_j$ which are component domains for hierarchy geodesics
(see Section \ref{hier}).
The component domain $W_j$ with $v, v_j$ as boundary components supports a thick hierarchy geodesic
 of length   $n_j$.  The other component domain $V_j$ (with only $v_j$ as its boundary,
lying to the left of $v_j$ in the picture) has a length one (or uniformly bounded length in general) hierarchy
geodesic segment supported on it.

Further, between the last split  surface containing $v, v_j$ and the first split  surface containing $v, v_{j+1}$
all split surfaces are thick and the component domain $S_K \setminus \{ v \}$ supports a thick 
hierarchy geodesic of length $m_j$.

 This forces the left vertical boundary of $T$ to have length $\sum_{j=1}^i (m_j + n_j)$. On the other hand
 the length of a path $\eta$ (say) in $K$ in the left part of the
picture  is of the order of $\sum_j (m_j +1)$.

 The shortest path through the Margulis tube $T$ has length of the order of
 $log(\sum_{j=1}^i (m_j + n_j))$
and by choosing $n_i$ large one can make the length of $\eta$ (i.e.\ $\sum_j (m_j +1)$)
the thickness of $K$. 

A similar construction is done to the right of $T$ with similar estimates, so as to get finally an amalgamated
block $B$ which has thickness of the order of $\sum_j (m_j +1)$.

\subsubsection{Gluing Blocks Together and producing a hierarchy}
We finally indicate how to glue several such blocks together to give a model for a hyperbolic manifold.
The hierarchy machinery comes quite handy here. Instead of blocks, we describe a hierarchy path,
the correspondence between these two descriptions being given by \cite{minsky-elc1, minsky-elc2}
as summarized in Theorem \ref{bilipmodel} and Definition \ref{wsplitrmk} in the Appendix.\\

Thus, we choose amalgamation geometry blocks $B_1, \cdots, B_l, \cdots$
and between the top boundary of $B_l$ and the bottom boundary of $B_{l+1}$ we 
glue a sequence of thick blocks. In the hierarchy, this corresponds to
a thick Teichmuller geodesic $\mu_l$ of length $d_l$, which remains thick in the curve complex $\CC(S)$.
Thus, successive $\mu_l, \mu_{l+1}$ are attached at a vertex $v_l$ (corresponding to the Margulis tube
$T$ in the above construction). However (a la \cite{DGO}), the incoming $\mu_l$ and the outgoing 
$\mu_{l+1}$ make an `angle' at  $v_l$ approximately equal in size to the length of the hierarchy path 
corresponding to $B_l$ in the link of $v_l$. Since this `angle' is large, the concatenation of
the $\mu_l$'s along with the hierarchy paths corresponding to $B_l$ is a quasigeodesic $r$ in $\CC(S)$
ensuring that the gluing of the blocks in order actually approximates the split geometry model
of  the hyperbolic 3-manifold whose ending lamination is given by $r(\infty) \in \partial CC(S) 
= \EL (S)$ (c.f.\ \cite{klarreich-el}).

A word about the hierarchy within the amalgamation geometry block is necessary here. The gluing pattern 
constructed in Section \ref{agblock} also constructs a quasigeodesic in the relevant component domain.
Hence the {\it qualitative} features of the constants used survive when we pass to the hierarchy determined by
the ending lamination. We note, in particular,  that even after passing to the actual hierarchy
 the {\it difference} between the lengths of the left vertical boundary and the right vertical boundary does not go
to infinity. This is because there is a bilipschitz map between the model built from the quasigeodesic approximation 
of the hierarchy (given by the gluing pattern) and that built from the actual hierarchy. Further there is
a uniform constant depending
only on that of the quasigeodesics. Hence, a posteriori (using Theorem \ref{bilipmodel}), the additive errors in passing from the
quasigeodesic approximation of  the hierarchy
to the actual hierarchy  are all uniformly bounded. \\ 

\medskip

\noindent {\bf Summary:}\\
We summarize the features of the above example:

\begin{enumerate}
\item The boundaries of the amalgamation blocks have (uniformly) bounded geometry
as required in a model
of amalgamated geometry.
\item the almost minimizing paths are thick
\item the amalgamation geometry blocks have unbounded thickness.
\end{enumerate}

Hence thick almost minimizing paths may exist even
when amalgamation geometry blocks have unbounded thickness. This  example
shows that the converse  to
Lemma \ref{amalgeo-suff} is false.

\subsection{Bounded thickness does not imply thick almost minimizing geodesics}\label{ctreg-split}
 In this subsection we give an example to show that even when all split blocks have bounded thickness, it is not necessary that almost minimizing geodesics be thick. Thus, the analog of Lemma \ref{amalgeo-suff} is false
in the general case of split geometry and hence by Theorem \ref{minsky-split}
 for degenerate hyperbolic 3-manifolds in general. As in Section \ref{unbddth}
we shall build an approximation to a hierarchy whose qualitative features pass to the genuine
hierarchy determined by the ending lamination corresponding to the base quasigeodesic of the approximate
hierarchy.

Here is the idea of the construction using the 
model of split geometry from Theorem \ref{bilipmodel} and Definition \ref{wsplitrmk} in the Appendix.
 We work with a sphere with $n$ holes for convenience. It is
straightforward to generalize the construction below to closed surfaces. The aim is to
first construct a split
geometry model such that

\begin{enumerate}
\item All the blocks are of split geometry
\item Each split geometry block $B_i$ has a Margulis tube $T_i$ corresponding to curve $v_i$
splitting it into an $S_{(0,3)i}$ and an $S_{(0,n-1)i}$. We call them $A_i, C_i$ for convenience.
Thus the vertical boundary of $T_i$ has two sides. The {\it short}  vertical boundary
 abutting   $A_i$
has length one and the {\it long}  vertical boundary abutting   $C_i$
has length $m_i$.
\item There is a hanging tube $T_{i,i+1}$ (see Section \ref{ht}) denoted $H_i$
corresponding to curve $v_{i,i+1}$  intersecting only $B_i$ and  $B_{i+1}$ 
separating the successive $A_i$ and $A_{i+1}$  so that the shortest path between 
$A_i$ and $A_{i+1}$ necessarily passes through $H_i$.
\item Arrange so that for all $i$,
 any geodesic in the hierarchy supported in a proper subsurface of $S\setminus \{ v_i, 
v_{i+1}, 
v_{i,i+1} \}$ is of bounded length; equivalently all curves other than 
$ \cup_i \{ v_i, 
v_{i+1}, 
v_{i,i+1} \}$ have geodesic realization in $M$ of length at least $\epsilon_0 > 0$.
\end{enumerate}

Once this is done, we see that 
\begin{enumerate}
\item Each split block is of bounded thickness (given by the part of $S$ corresponding to $A_i$).
\item Any almost minimizing geodesic necessarily passes deep into $H_i$ and is therefore thin.
\end{enumerate}

Towards this, it suffices to construct a hierarchy path (see Section \ref{hier} 
in the Appendix) such that conditions 3 and 4 above are
satisfied. We translate this into the language of resolutions of a hierarchy to obtain the
 qualitative properties of the resolution we want.
We require that

\begin{enumerate}
\item The geodesic in the hierarchy on the subsurface $\Sigma_{i, i+1}$ bounded by $\{ v_i, 
v_{i,i+1} \}$ is long and thick, for instance such that $\Sigma_{i, i+1}$ is $S_{0,4}$ and the corresponding
blocks are thick. 
\item The same for the geodesic in the hierarchy on the subsurface $\Sigma_{i+1, i}$ bounded by $\{ v_{i+1}, 
v_{i,i+1} \}$
\end{enumerate}

Towards this we outline the construction in blocks $B_i$
and the split surfaces in them.
 We will have numbers $m_i, l_i, n_i$ corresponding
to block $B_i$ and then determine quite flexible conditions on them to satisfy the above requirements.

\begin{enumerate}
\item The  length of the long side of $T_i$ is $n_i$ (equivalently,  the long side of $T_i$
has $n_i$ Minsky blocks abutting it).   
\item There is a constant $L$ such that the number of $L-$bilipschitz split surfaces having $v_i$ and 
$v_{i-1,i}$ as boundary curves is $l_i$ and successive such split surfaces (corresponding to the
resolution) bound between them a region that is $L-$bilipschitz to the product  $\Sigma_{i, i-1} \times I$.
\item Similarly, the number of $L-$bilipschitz split surfaces having $v_i$ and 
$v_{i,i+1}$ as boundary curves is $m_i$ and successive such split surfaces (corresponding to the
resolution) bound between them a region that is $L-$bilipschitz to the product  $\Sigma_{i+1, i} \times I$.
\item $n_i \geq (l_i + m_i)$. This ensures that there is an $L-$bilipschitz  split surface with only
$v_i$ as its boundary component, situated between the top of the lower hanging tube and the
bottom of the higher hanging tube.
\item There exist $L-$bilipschitz split surfaces having only
$v_{i-1,i}$ as boundary curves (these correspond to the split surfaces abutting on the vertical boundary of
$T_{i-1,i}$ opposite to the ones considered in (2) above). These are also $l_i$ in number and successive
ones cobound a region $L-$bilipschitz to a product.
\item There exist $L-$bilipschitz split surfaces having only
$v_{i,i+1}$ as boundary curves (these correspond to the split surfaces abutting on the vertical boundary of
$T_{i,i+1}$ opposite to the ones considered in (3) above). These are also $m_i$ in number and successive
ones cobound a region $L-$bilipschitz to a product.
\item $l_i, m_i \to \infty$ as $i \to \infty$.
\end{enumerate}

See figure below.

\smallskip

\begin{center}

\includegraphics[height=12cm]{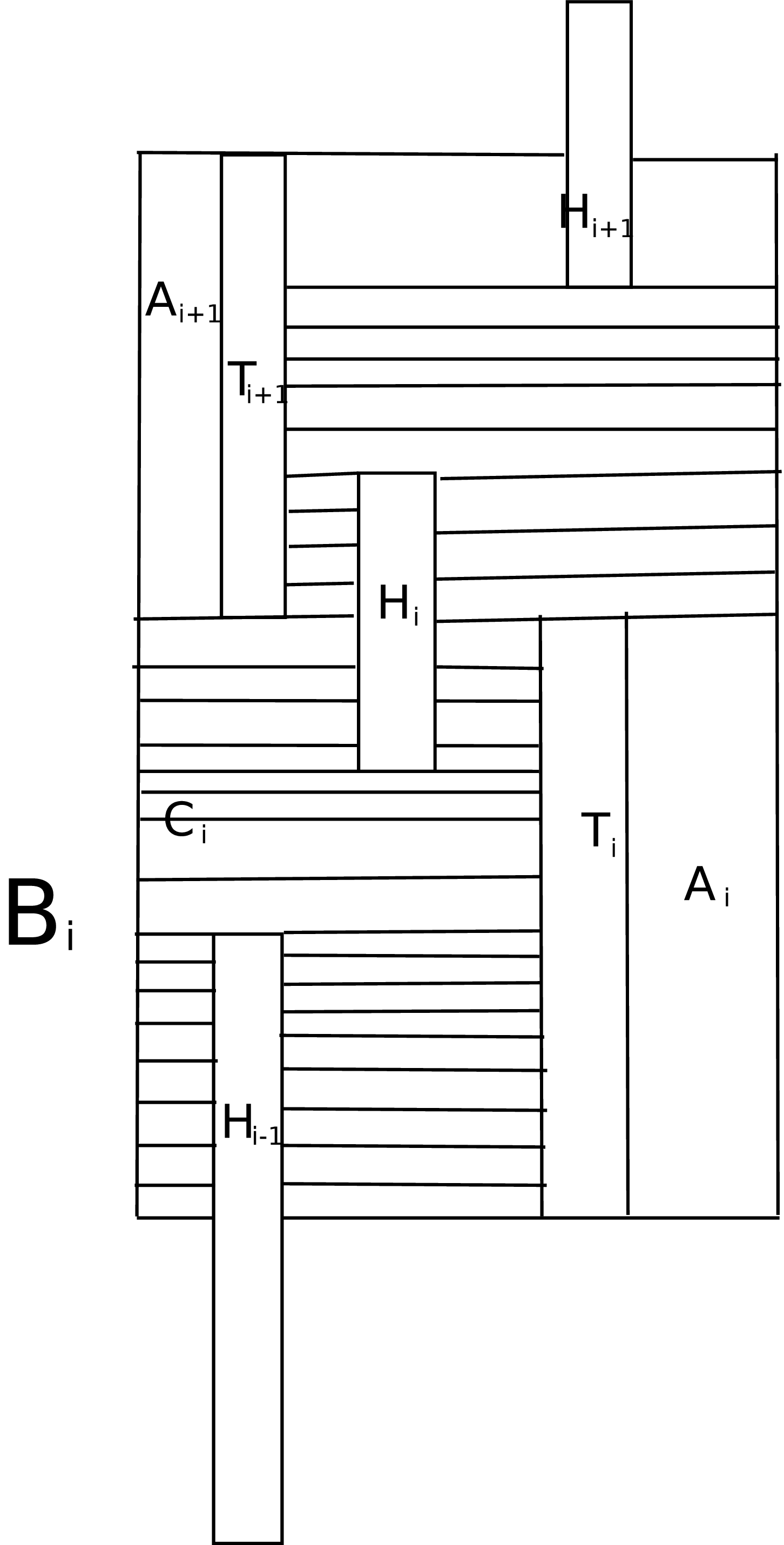}

\smallskip

\end{center}

\smallskip

\noindent {\bf The Estimates:}\\
With the conditions above satisfied, the promised counterexample is a consequence of the following argument. 
The 
distance from  the bottom of $B_i$ to the top of $B_{i+1}$
is  approximately $1+ log (2l_i)$ if one cuts across
the hanging tube. Else, any thick path has length at least $m_{i+1}$. We can make $m_{i+1}$
and $l_i$ comparable, forcing the thin path to be shorter (as it is logarithmic in the length of the shortest
thick path).\\

\noindent {\bf Constructing slices and the hierarchy}\\
It therefore suffices to find a geodesic in the curve complex such that its associated 
 hierarchy path
satisfies  the above conditions. Let us start at the middle of block $B_i$ to see how to build up the hierarchy
(the hierarchy described below is a somewhat more sophisticated version of the well-known `chariot-wheel'
example  \cite{masur-minsky2, minsky-elc1}):

\begin{enumerate}
\item The sequence of split surfaces (or equivalently, slices of the hierarchy)
 give a thick geodesic supported in $C_i$
\item The geodesic stops at a  slice containing $v_i, v_{i,i+1}$. This corresponds to the 
slice through the lower boundary
of the hanging tube $H_i$ in the picture.
\item Next we have two thick geodesics in a pair of component domains  properly contained in $C_i$.
These component domains correspond to the two components of $C_i \setminus v_{i,i+1}$.
\item In the resolution, the  hierarchy geodesics of the previous item
 end at the last slice containing $v_i$ and corresponds to the top
boundary of $B_i$.
\item At this stage $v_i$ is replaced by $v_{i+1}$ and we have two thick geodesics in a pair of component domains  properly contained in $C_{i+1}$.
These component domains correspond to the two components of $C_{i+1} \setminus v_{i,i+1}$.
\item In the resolution, the  hierarchy geodesics of the previous item
 ends with the last slice containing both $v_{i+1}, v_{i,i+1}$ and corresponds to the 
slice through the upper boundary
of $H_i$ in the picture.
\item The next set of slices of the hierarchy
 give a thick geodesic supported in $C_{i+1}$ 
\item We can now go back to item (1), replacing $i$ by $i+1$ and proceeding as above.
\end{enumerate}

It is now possible to construct a geodesic path exhibiting the above behavior by
choosing the geodesics in the links
of $v_i$ and $\{ v_i, v_{i,i+1} \}$ according to the requirements given by Items (1) and (3) above. This completes
the construction of our counterexample.

\section*{Acknowledgments} This work was initiated during a visit of the second author to Universite Paul
Sabatier, Toulouse,  during June 2015. We thank the University for its hospitality.

The first author would like to thank Y. Coud\`ene for very stimulating conversations.
\bibliography{deghor4}
\bibliographystyle{alpha}

\newpage

\section{Appendix: Hierarchies and Split Geometry} \label{splitt}

We recapitulate the essential aspects of hierarchies and split geometry from \cite{masur-minsky2, minsky-elc1, mahan-split}. The definitions here follow \cite{mahan-split}.

\subsection{Hierarchies} \label{hier}
 We fix some notation first:\\
\begin{itemize}
\item $\xi ( S_{g,b} ) =
3g+b$ is the
 {\it complexity} of a compact surface $S=S_{g,b}$ of genus $g$
and $b$ boundary components 
\item For an essential subsurface $Y$ of $S$, $\CC(Y)$ will be its curve complex and $\PP (Y)$ its pants complex.
\item $\gamma_\alpha$ will be a collection of disjoint simple closed curves on $S$ corresponding to a
 simplex $\alpha \in \CC(Y)$
\item  $\alpha,\beta$ in   $\CC(Y)$ 
 fill  an essential subsurface $Y$ of $S$ if all non-trivial non-peripheral curves in $Y$ have  essential
intersection with at least one of $\gamma_\alpha$ or $\gamma_\beta$, where  $\gamma_\alpha$ and $\gamma_\beta$
are chosen to intersect  minimally. 
\item 
Given  $\alpha,\beta$ in $\CC(S)$,
form a regular neighborhood of $\gamma_\alpha\cup\gamma_\beta$, 
and fill in all
disks and one-holed disks to obtain $Y $ which is said to be {\it filled} by $\alpha,\beta$. 
\item For an essential subsurface $X\subset Z$ let $\boundary_Z(X)$ denote the
{\em relative boundary} of $X$ in $Z$, i.e.\ those boundary components
of $X$ that are non-peripheral in $Z$.
\item A   {\bf pants decomposition} of a compact surface $S$, possibly with boundary,
is a disjoint collection of
3-holed spheres $P_1, \cdots , P_n$ embedded in $S$ such that $S \setminus \bigcup_i P_i$ is a disjoint collection of non-peripheral
annuli in $S$, no two of which are  homotopic. 
\item A {\bf tube} in an end $E \subset N$ is a regular $R-$neighborhood
$N(\gamma, R)$ of an unknotted geodesic $\gamma$ in $E$.
\end{itemize}

\begin{definition} {\bf 
Tight Geodesics and Component Domains}\\
{\rm
  Let $Y$ be an essential subsurface in $S$. If $\xi(Y)>4$,  a } {\bf tight} {\rm
  sequence of simplices
  $\{v_i\}_{i\in\II} \subset \CC(Y) $ (where $\II$ is a finite or
  infinite interval in $\Z$) satisfies the following: \\
1) For any vertices $w_i$ of $v_i$ and $w_j$ of $v_j$ where $i\ne
  j$, $d_{\CC(Y)}(w_i,w_j) = |i-j|$,\\
2) For $\{i-1,i,i+1\}\subset \II$, $v_i$ equals $\boundary_Y F(v_{i-1},v_{i+1})$.\\
If $\xi(Y)=4$ then a tight sequence is  the vertex sequence
of a geodesic in $\CC(Y)$.\\
A } tight geodesic {\rm $g$ in $\CC(Y)$
consists of a tight sequence
$v_0, \cdots , v_n$, and two simplices  in $\CC (Y)$, $\I=\I(g)$ and $\T=\T(g)$,
called its  initial and  terminal markings such that $v_0$ (resp. $v_n$) is a sub-simplex of $\I$ (resp. $\T$). The length of $g$ is $n$. 
$v_i$ is called a  simplex of $g$.
$Y$ is called the }  domain or  support {\rm of $g$ and is
denoted as $Y=D(g)$.  $g$ is said to be  supported in $D(g)$}.
\end{definition}

For a surface $W$ with $\xi(W)\ge 4$ and $v$ a simplex of $\CC(W)$
we say that $Y$ is a
{\em component domain of $(W,v)$} if  $Y$ is a component of
$W\setminus \collar(v)$, where $\collar(v)$ is a small tubular neighborhood of the simple closed curves.

If $g$ is a tight geodesic with domain $D(g)$,
we call $Y\subset S$ a {\em component domain of $g$} if
for some simplex $v_j$ of $g$, $Y$ is a component domain of
$(D(g), v_j)$.

\begin{defn} \label{hpa} {\bf Hierarchies}\\ {\rm  A {\bf hierarchy path} in $\PP (S)$ 
joining pants decompositions $P_1$ and $P_2$  is a path $\rho : [0, n] \rightarrow P(S)$ joining $ \rho (0) = P_1$ to $\rho (n) = P_2$ such that \\
1) There is a collection $\{ Y \}$ of essential, non-annular subsurfaces of $S$, called
       component domains for $\rho$, such that for each component domain $Y$ there is a
       connected interval $J_Y \subset [0, n]$ with $\partial Y  \subset \rho (j)$ for each $j \in  J_Y$.\\
2) For a component domain $Y$, there exists a tight geodesic $g_Y $ supported in $Y$ such that for each
       $j \in J_Y$, there is an $\alpha \in g_Y$ with $\alpha \in \rho (j)$. \\ 
A {\bf hierarchy path} in $\PP (S)$  is a sequence $\{ P_n \}_n$ of pants decompositions of $S$ such that for any
$P_i, P_j \in \{ P_n \}_n$, $i \leq j$,  the  finite sequence  $P_i, P_{i+1}, \cdots , P_{j-1}, P_j$ is a hierarchy path
joining pants decompositions $P_i$ and $P_j$. \\
The collection $H$ of tight geodesics $g_Y $ supported in  component domains  $Y$ of $\rho$ will be called the} {\bf hierarchy} {\rm
of tight geodesics associated to $\rho$.} \end{defn}

\begin{definition}{\rm 
A {\bf slice } of a hierarchy $H$ associated to a hierarchy path $\rho$ is a set $\tau$ of pairs
$(h,v)$, where $h\in H$ and $v$ is a simplex of $h$, satisfying
the following properties:
\begin{enumerate}
\item A geodesic $h$ appears in at most one pair in $\tau$.
\item There is a distinguished pair $(h_\tau,v_\tau)$ in $\tau$,
  called the bottom pair of $\tau$. We call $h_\tau$ the bottom geodesic.
\item For every $(k,w)\in \tau$ other than the bottom pair, $D(k) $
  is a component domain of $(D(h),v)$ for some $(h,v)\in\tau$.
\end{enumerate}

A {\bf resolution} of a hierarchy $H$ associated to a hierarchy path $\rho : I \rightarrow \PP (S)$ is a sequence of slices $\tau_i
=\{ (h_{i1}, v_{i1}), (h_{i2}, v_{i2}), \cdots ,  (h_{in_i}, v_{in_i}) \} $ (for $i \in I$, the same indexing set)
such that the set of vertices of the simplices  $\{  v_{i1},  v_{i2}, \cdots ,  v_{in_i} \} $ is the same as the set of the non-peripheral
boundary curves of the pairs of pants in $\rho (i) \in \PP (S)$.}
\end{definition}

\subsection{Split level Surfaces}\label{sls}
Let $E$ be a degenerate end of a hyperbolic 3-manifold $N$.
Let $\TT$ denote a collection of disjoint, uniformly separated tubes in ends of  $N$ 
such that

\begin{enumerate}
\item
 All Margulis tubes in $E$ belong to $\TT$.
\item there exists $\epsilon_0 >0$ such that the injectivity radius $injrad_x(E) > \epsilon_0$ for all $x \in E \setminus  \bigcup_{T \in \TT} Int(T)$.
\end{enumerate}

In \cite{minsky-elc1}, Minsky constructs a
model manifold $M$ bilipschitz homeomorphic to $N$ and equipped with a piecewise Riemannian structure.

Let $(Q, \partial Q)$ be the unique hyperbolic  pair of pants such that each  component
of $\partial Q$ has length one. $Q$ will be called
the {\it standard} pair of pants.
An isometrically embedded copy of $(Q, \partial Q)$ in $(M(0), \partial M(0))$ will be said to be {\it flat}.

\begin{defn} {\rm A {\bf split level surface} associated to a pants decomposition $\{ Q_1, \cdots , Q_n \}$ of 
a compact surface $S$ (possibly with boundary) in $M(0) \subset M$
is an embedding $f : \cup_i (Q_i, \partial Q_i) \rightarrow (M(0), \partial M(0))$ such that \\
1) Each $f (Q_i, \partial Q_i)$ is flat \\
2) $f$ extends to an embedding (also denoted $f$) of $S$ into $M$ such that the interior of each annulus component of
$f(S \setminus \bigcup_i Q_i)$  lies entirely in $F(\bigcup_{T \in \TT} Int(T))$. \\
} \end{defn}

Let
$S_{i}^{s}$ denote the union of the collection of flat pairs of pants
 in the image of the embedding  $S_{i}$.  Note that $S_i \setminus S_{i}^{s}$ consists of annuli properly embedded in Margulis tubes.

The class   of {\it all} topological embeddings from $S$ to $M$ that agree with a split level surface $f$ 
associated to a pants decomposition $\{ Q_1, \cdots , Q_n \}$ on 
$Q_1 \cup \cdots \cup Q_n$ will be denoted by $[f]$. 

We define a partial order $\leq_E$ on the collection of split level surfaces in an end $E$ of $M$ as follows: \\
$f_1 \leq_E f_2$ if there exist $g_i \in [f_i]$, $i=1,2$, such that $g_2(S)$ lies in the unbounded component of $E \setminus g_1(S)$.

A sequence $S_i$ of split level surfaces is said to exit an end $E$ if $i<j$ implies $S_i \leq_E S_j$ and further for all compact subsets $B \subset E$, there exists
$L>0$ such that $S_i \cap B = \emptyset$ for all $i \geq L$.

\begin{definition}\label{thin-def}
  A curve $v$ in $S \subset E$ is {\bf $l$-thin} if the core curve of the Margulis tube $T_v (\subset E \subset N)$ has length less than or equal to $l$. 
A   tube $T\in \TT$  is   $l$-thin  if its core curve    is   $l$-thin.  A   tube $T\in \TT$  is   $l$-thick if it is not    $l$-thin.  \\
A curve $v$ is said to split a pair of split level surfaces $S_i$ and $S_j$ ($i<j$) if $v$ occurs as a boundary curve of
 both $S_i$ and $S_{j}$. A pair of split level surfaces $S_i$ and $S_j$ ($i<j$) is said to be an {\bf $l$-thin pair} if there exists an   $l$-thin curve $v$ 
 splitting both  $S_i$ and $S_{j}$.  \\
\end{definition}

The collection of all  $l$-thin tubes is denoted as $\TT_l$. The union of all  $l$-thick tubes along with $M(0)$  is denoted as $M(l)$.

\begin{defn}
A pair of split level surfaces $S_i$ and $S_j$ ($i<j$) is said to be {\bf $k$-separated} if \\
a) for all $x \in S_i^s$, 
$d_M(x,S_j^s) \geq k$\\
b)  Similarly, for all $x \in S_j^s$, $d_M(x,S_i^s) \geq k$. \end{defn}

\begin{defn} {\bf  Minsky Blocks}\label{mb}
 (Section 8.1 of \cite{minsky-elc1})\\ 
A tight geodesic in $H$ supported in a component domain of complexity $4$ is called a $4$-geodesic
and an edge of a $4$-geodesic in $H$ is called a $4$-edge.

 Given a 4-edge $e$ in $H$,
let $g$ be the $4$-geodesic containing it, and let $D(e)$ be the
domain $D(g)$. Let $e^-$ and $e^+$ denote the initial and
terminal vertices of $e$. Also $\collar v$ denotes a small tubular neighborhood of $v$ in $D(e)$.

To each $e$  a Minsky block $B(e)$ is assigned as as follows:
$$B(e) =  (D(e)\times [-1,1]) \setminus ( {\bf collar}
(e^-)\times[-1,-1/2)\cup   {\bf collar} (e^+)\times(1/2,1]).$$

 The {\em horizontal
boundary} of $B(e)$ is

\begin{center}

$
\boundary_\pm B(e) \equiv (D(e)\setminus\collar(e^\pm)) \times
\{\pm 1\}.
$

\end{center}

The horizontal boundary is  a union of three-holed spheres. The
rest of the boundary is a union of annuli
and is called the vertical boundary. The top (resp. bottom) horizontal boundaries of $B(e)$ are 
$(D(e)\setminus\collar(e^+)) \times \{ 1 \} $  (resp.  $(D(e)\setminus\collar(e^-)) \times \{ - 1 \} $.
\end{defn}

\subsubsection{The Model and the bi-Lipschitz Model Theorem}

\begin{theorem} \cite{minsky-elc1} \cite{minsky-elc2} 
Let $N$ be the convex core of a simply or doubly degenerate hyperbolic 3-manifold minus an open neighborhood of the cusp(s).
Let $S$ be a compact surface, possibly with boundary, such that
$N$ is homeomorphic to $S \times [0, \infty )$ or  $S \times \mathbb{R}$ according as 
$N$ is simply or doubly degenerate. 
 There exist $L \geq 1$, $ \theta, \omega, \epsilon, \epsilon_1 > 0$,
  a collection $\TT$ of $(\theta,\omega)$-thin tubes containing all Margulis tubes in $N$,
a  3-manifold  $M$,  and an $L$-bilipschitz homeomorphism $F: N \rightarrow M$ 
such that the following holds. \\
Let $M(0) = F(N \setminus \bigcup_{T \in \TT} Int(T))$ and let $F(\TT)$ denote the image of the collection $\TT$ under $F$.
Let  $\leq_E$ denote the partial order on the collection of split level surfaces in an end $E$ of $M$. Then there exists a sequence
$S_i$ of split level surfaces associated to pants decompositions $P_i$ exiting $E$ such that  

\begin{enumerate}
\item $S_i \leq_E S_j$ if $i \leq j$.
\item The sequence $\{ P_i \}$ is a hierarchy path in $\PP (S)$.
\item If $P_i \cap P_j = \{ Q_1, \cdots Q_l \}$ then $f_i(Q_k)=f_j(Q_k)$ for $k=1 \cdots l$, where $f_i, f_j$ are the embeddings
defining the split level surfaces $S_i, S_j$ respectively.
\item For all $i$, $P_i \cap P_{i+1} = \{ Q_{i,1}, \cdots Q_{i,l} \}$ consists of a collection of $l $ pairs of pants,
 such that $S \setminus  (Q_{i,1} \cup \cdots \cup Q_{i,l})$
has a single non-annular component of complexity $4$.
Further, there exists a Minsky block $W_i$ and an isometric map $G_i$ of $W_i$ into $M(0)$ such that $f_i(S \setminus
 (Q_{i,1} \cup \cdots \cup Q_{i,l})$ (resp. $f_{i+1}(S \setminus
 (Q_{i,1} \cup \cdots \cup Q_{i,l})$) is contained in the bottom (resp. top) gluing boundary of $W_i$. 
 \item For each flat pair of pants $Q$ in a split level  surface $S_i$ there exists an isometric embedding of $Q \times [-\epsilon, \epsilon]$
into $M(0)$ such that the embedding restricted to $Q \times \{ 0 \}$  agrees with $f_i$ restricted to $Q$.
\item For each $T\in \TT$, there exists a split level surface $S_i$ associated to pants decompositions $P_i$
such that the core curve of $T$ is isotopic to a non-peripheral boundary curve of $P_i$. The boundary $F(\partial T)$ of $F(T)$ with the induced
metric $d_T$ from  $M(0)$ is a
Euclidean torus equipped with a product structure $S^1 \times S^1_v$,
where any circle of the form $S^1 \times \{ t \} \subset S^1 \times S^1_v$
is a round circle of unit length and is called a horizontal circle; and any circle of the form 
 $\{ t \} \times  S^1_v$ is a round circle of length $l_v$ and is called a vertical circle.
\item Let $g$ be a tight geodesic other than the bottom geodesic
 in the hierarchy $H$ associated to the hierarchy path $\{ P_i \}$, let $D(g)$ 
be the support of $g$ and let $v$ be a boundary curve of $D(g)$. Let $T_v$ be the tube in $\TT$ such that
the core curve of $T_v$ is isotopic to $v$. If a vertical circle of $(F(\partial T_v), d_{T_v})$ has length $l_v$ less than
$n\epsilon_1$, then the length of $g$ is less than $n$. \end{enumerate}
\label{bilipmodel} \end{theorem}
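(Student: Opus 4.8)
The plan is to assemble the statement from the construction of Minsky's model manifold in \cite{minsky-elc1} together with the bi-Lipschitz Model Theorem of Brock--Canary--Minsky \cite{minsky-elc2}; the only content beyond those references is bookkeeping, namely reading off items (1)--(8) from the structure of a \emph{resolution} of the hierarchy determined by the end invariants of $N$. So first I would fix the ending invariants of the (simply or doubly degenerate) manifold $N$, form the associated hierarchy $H$ of tight geodesics \cite{masur-minsky2}, and take $M$ to be Minsky's model, obtained by gluing one Minsky block $B(e)$ for each $4$-edge $e$ of $H$, end to end and to model tubes $T_v$, one for each vertex $v$ of $H$ whose model-tube coefficient is sufficiently small. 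The homeomorphism $F : N \to M$ and the constants $L, \theta, \omega, \epsilon, \epsilon_1$ are exactly the output of \cite{minsky-elc1, minsky-elc2}: $F$ is $L$-bilipschitz off the $(\theta,\omega)$-thin tubes, and $\TT := F^{-1}$ of the collection of model tubes contains every Margulis tube of $N$. This settles the ambient set-up.

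Next I would produce the split level surfaces from a resolution $\{\tau_i\}$ of $H$: the vertex set of each slice $\tau_i$ is a pants decomposition $P_i$, consecutive slices $\tau_i,\tau_{i+1}$ differ by a single elementary move, and in $M$ the flat pairs of pants of $P_i$ are assembled into an embedded split level surface $S_i$, the product neighborhoods $Q\times[-\epsilon,\epsilon]$ of item (5) being the standard product collars that Minsky's model attaches to each flat pair of pants. Monotonicity and exiting (item (1)), the hierarchy-path property of $\{P_i\}$ (item (2)), and the fact that pants shared by $P_i$ and $P_j$ are realized by the \emph{same} flat pants (item (3)) are direct consequences of the definition of a resolution and of the block decomposition of $M$. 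For item (4), consecutive slices differ by an elementary move across a single complexity-$4$ component domain $D(e_i)$; then $W_i := B(e_i)$ is the Minsky block assigned to that $4$-edge, and $S_i, S_{i+1}$ cobound it. Item (6) is immediate, since every model tube $T_v$ is glued in along $v$ and $v$ occurs as a boundary curve of the pants decomposition of some slice by the way component domains of $H$ arise.

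What remains, and where the real geometric content lies, are items (7) and (8), on the internal geometry of the model tubes. The Euclidean product structure $S^1\times S^1_v$ on $F(\partial T_v)$, with unit-length horizontal (meridional) circles and vertical (longitudinal) circles of length $l_v$, is built into Minsky's definition of a model tube, where $l_v$ equals the model-tube coefficient attached to $v$. Item (8) is the contrapositive of the estimate that a long tight geodesic supported in a domain abutting $v$ forces a long vertical circumference: each $4$-edge of a tight geodesic $g$ with $v\in\partial D(g)$ contributes a vertical annulus of uniformly bounded length to $\partial T_v$, and in the resolution these annuli are stacked (helically) around the longitude, so $\length(g)\geq n$ forces $l_v$ to be at least a fixed positive constant times $n$; choosing $\epsilon_1$ below the reciprocal of that constant yields $l_v < n\epsilon_1 \implies \length(g)<n$.

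The hard part will be making this last estimate precise, that is, quantifying how the longitudinal length of $T_v$ is partitioned among the tight geodesics of $H$ supported in the component domains having $v$ on their boundary. This is exactly Minsky's analysis of the model-tube coefficients in \cite{minsky-elc1}, where the coefficient is shown to be comparable, up to bounded additive and multiplicative error, to such a sum of geodesic lengths; once that comparison is invoked, items (7) and (8) follow and the proof is complete. I expect essentially no difficulty in items (1)--(6), which are formal consequences of the resolution machinery, and I would simply cite \cite{minsky-elc1, minsky-elc2, masur-minsky2} for the substance of (7) and (8).
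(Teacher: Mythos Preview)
Your proposal is correct and matches the paper's treatment: the theorem is not proved in the paper but is stated as a direct consequence of Minsky's model construction \cite{minsky-elc1} together with the bi-Lipschitz Model Theorem of Brock--Canary--Minsky \cite{minsky-elc2}, with the enumerated items read off from a resolution of the hierarchy. Your outline of how items (1)--(7) are extracted from that machinery is accurate bookkeeping, and the paper likewise defers entirely to the cited references for the substance.
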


\subsubsection{Tori  and Meridinal Coefficients} \label{tori}
Let $T$ be the boundary of a Margulis tube in $M$. The boundary of a
Margulis tube has the structure of  a Euclidean torus and gives a unique
 point $\omega_T$ in the upper half plane, the Teichmuller space of the torus. The real and imaginary
components of $\omega_T$ have a geometric interpretation.

Suppose that the Margulis tube $T$  corresponds to a vertex $v \in \CC (S)$. Let $tw_T$
 be the signed length of the annulus geodesic corresponding to $v$, i.e.\ it counts with sign
the number of Dehn twists about the curve
represented by $v$.  Next, note that by the construction of the Minsky
model, the {\bf vertical} boundary of $T$ consists of two sides - the
{\bf left vertical boundary} and {\bf right vertical boundary}. Each is attached to vertical boundaries of Minsky
blocks. Let the total number of blocks whose vertical boundaries,
 are glued to the vertical boundary
of $T$ be $n_T$. Similarly, let the total number of blocks whose vertical boundaries,
 are glued to the left (resp. right) vertical boundary
of $T$ be $n_{Tl}$ (resp. $n_{Tr}$) so that $n_T = n_{Tl} + n_{Tr}$.

In Section 9 of \cite{minsky-elc1}, Minsky shows:

\begin{theorem} \cite{minsky-elc1}
There exists $C_0 \geq 0$, such that the following holds. \\
$ \omega - (tw_T + i n_T )\leq C_0$
 \label{meridiancoeff}
 \end{theorem}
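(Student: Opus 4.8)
The plan is to read $\omega_T$ directly off the piecewise--Euclidean structure that Minsky's model $M$ puts on a neighbourhood of the tube $T=\partial U(v)$, and to identify its real and imaginary parts with $tw_T$ and $n_T$ respectively, up to a universal additive error; this is the computation carried out in Section~9 of \cite{minsky-elc1}, and I would follow it.

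First I would recall how the solid torus $U(v)$ is assembled inside $M$. By the construction underlying Theorem~\ref{bilipmodel}, $U(v)$ is built from the "trenches'' $\collar(v)\times[1/2,1]$ and $\collar(v)\times[-1,-1/2]$ excavated from the horizontal boundaries of the Minsky blocks $B(e)$ (Definition~\ref{mb}) whose $4$--edge $e$ has $v$ as one of its endpoint vertices. Consecutive trenches are glued along their horizontal faces, $n_{Tl}$ of them on the left vertical side of $T$ and $n_{Tr}$ on the right, so that $n_T=n_{Tl}+n_{Tr}$ blocks abut $T$. Each trench carries a fixed standard flat structure ($\collar(v)$ being a flat annulus of unit core length and definite modulus), so stacking the trenches produces a flat solid torus whose core runs in the $\collar(v)$--direction and whose vertical extent — which is exactly the vertical length $l_v$ appearing in item~(6) of Theorem~\ref{bilipmodel} — equals $n_T$ up to a universal additive constant; the only deviations from an exactly standard stack occur in the $O(1)$ trenches at the two ends of the tube, where the solid torus caps off, and these contribute a bounded error.

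Second I would account for the twisting. When the trench at the top of one block is glued to the trench at the bottom of the next, the gluing map is a power of the Dehn twist about $v$; by Minsky's construction the signed total of these powers taken around $T$ is exactly $tw_T$ (the same quantity that records the relative translation in the annular curve complex of $v$, again up to $O(1)$). Hence in the natural integral basis for $H_1(T)$ — the horizontal circle $h$ of unit length coming from $\collar(v)$ (item~(6) of Theorem~\ref{bilipmodel}) and a curve $\ell$ that runs once around in the vertical direction, of length $l_v$ — the meridian $\mu$ of $U(v)$ is homologous to $tw_T\cdot h+\ell$, with an error of at most $O(1)$ in the $h$--coefficient coming from the two end trenches.

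Finally, $\omega_T\in\bH^2$ is by definition the point of the Teichm\"uller space of $T$ represented, in the coordinates that make $h$ the real unit vector, by the complex number whose real part is the $h$--component of $\mu$ and whose imaginary part is $l_v$. By the first two steps this is $tw_T+O(1)+i(n_T+O(1))$, so $|\omega_T-(tw_T+in_T)|\le C_0$ for a universal $C_0$. The main obstacle is book-keeping rather than anything conceptual: one must pin down the exact normalisations in Minsky's block construction so that the vertical extent of the stack is literally $n_T+O(1)$ and not merely comparable to $n_T$, and verify that the combinatorial twist $tw_T$ read off the hierarchy $H$ agrees, to within a bounded amount, with the geometric twisting used to glue the trenches of $U(v)$ together.
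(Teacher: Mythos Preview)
The paper does not supply its own proof of Theorem~\ref{meridiancoeff}; the result is simply quoted from Section~9 of \cite{minsky-elc1} and used as a black box in the Appendix. Your proposal is a faithful sketch of Minsky's own argument (stacking trenches to read off the vertical extent $n_T$, and tracking the accumulated Dehn twisting to read off $tw_T$), so it is entirely consistent with what the paper cites, and indeed goes further than the paper by unpacking the reference.
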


\subsubsection{Consequences}\cite{mahan-split}
Two consequences of Theorem \ref{bilipmodel} that we shall need are given below.

\begin{lemma} \cite[Lemma 3.6]{mahan-split}
Given $l > 0$ there exists $n \in \natls$ such that the following holds. \\Let $v$ be a vertex in the hierarchy $H$ such that
the length of the core curve of the Margulis tube $T_v$ corresponding to $v$ is greater than $l$. 
Next suppose $(h,v) \in \tau_i$ for some slice $\tau_i$ of the hierarchy $H$ such that $h $
is supported on $Y$, and $D$ is a component of $Y \setminus \collar v$. Also suppose that $h_1 \in H$
such that $D$ is the support of $h_1$. Then the length of $h_1$ is at most $n$.
\label{abut}
\end{lemma}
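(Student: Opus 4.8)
The plan is to reduce this to item (8) of the bi-Lipschitz model Theorem \ref{bilipmodel}: the hypothesis that the core of $T_v$ has length $>l$ will prevent $F(\partial T_v)$ from being long in its vertical direction, and item (8) then forces any hierarchy geodesic whose support is bounded by $v$ to be short.

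First I would pass to the model manifold $M$, with $F: N\to M$ the $L$-bilipschitz homeomorphism of Theorem \ref{bilipmodel} and $T_v\in\TT$ the tube whose core is isotopic to $v$. The one genuinely geometric step is to bound the length $l_v$ of a vertical circle of $(F(\partial T_v),d_{T_v})$ in terms of $l$. Standard Margulis-tube geometry gives that the radius of an embedded tube about a closed geodesic is a decreasing function of the length of that geodesic and tends to $\infty$ as the length tends to $0$; hence if the core of $T_v$ has length $>l$ (and, being a Margulis tube, length below the Margulis constant), its radius, and therefore the intrinsic diameter of $\partial T_v$ in $N$, is bounded above by some $D_0=D_0(l)$. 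Since $F$ distorts lengths by at most $L$, we get $l_v\le LD_0(l)=:L_1$, depending only on $l$ and the fixed model constant $L$. (Alternatively the same bound follows from Theorem \ref{meridiancoeff}, since $\mathrm{Im}\,\omega_{T_v}$ is comparable to $l_v$ while a deep tube has core of complex length comparable to $1/\omega_{T_v}$, so a lower bound on the core length gives an upper bound on $\mathrm{Im}\,\omega_{T_v}$.) I expect this to be the main point; the remainder is bookkeeping.

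Next I would check that item (8) applies to $g:=h_1$. Its support $D(h_1)=D$ is a component of $Y\setminus\collar v$, hence a proper essential subsurface of $S$, so $h_1$ is not the bottom (main) geodesic of $H$, which is supported on all of $S$. By construction $v$ is a boundary curve of $D=D(h_1)$, and $T_v$ is exactly the tube of $\TT$ whose core is isotopic to $v$, so the hypotheses of item (8) hold. (The slice hypothesis $(h,v)\in\tau_i$ is used only to guarantee that $D$ is a genuine component domain, so that $h_1$ occurs in $H$.) Finally I would pick $n=n(l)\in\natls$ with $n\epsilon_1>L_1$, where $\epsilon_1$ is the constant of Theorem \ref{bilipmodel}; then the vertical circle of $F(\partial T_v)$ has length $l_v\le L_1<n\epsilon_1$, so item (8) gives that the length of $h_1$ is less than $n$, and in particular at most $n$. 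As $n$ depends only on $l$ and the fixed constants $L,\epsilon_1$, this proves the lemma.
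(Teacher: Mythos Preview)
Your argument is correct and is precisely the deduction the paper has in mind: the lemma is stated in the Appendix as one of the ``consequences of Theorem \ref{bilipmodel}'' and is not proved separately there, being quoted from \cite{mahan-split}. Your reduction via the Margulis-tube radius bound (core length $>l$ $\Rightarrow$ bounded tube radius $\Rightarrow$ bounded area/diameter of $\partial T_v$ in $N$ $\Rightarrow$ bounded $l_v$ in $M$ via the $L$-bilipschitz $F$) followed by the last item of Theorem \ref{bilipmodel} is exactly the intended mechanism.

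One small correction: the item of Theorem \ref{bilipmodel} you invoke is item (7), not item (8); the theorem has only seven enumerated items, and the statement ``if $l_v<n\epsilon_1$ then the length of $g$ is less than $n$'' is item (7).
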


\begin{lemma}\cite[Lemma 3.7]{mahan-split}
Given $l > 0$ and $n \in \natls$, there exists $L_2 \geq 1$ such that the following holds:\\
Let $S_i, S_j$ ($i<j$) be split level surfaces associated to pants decompositions $P_i, P_j$ such that \\
a) $(j-i) \leq n$ \\
b) $P_i \cap P_j$ is a (possibly empty) pants
decomposition of $S \setminus W$, where $W$ is an essential (possibly disconnected) subsurface of $S$ such that
each component $W_k$ of $W$ has complexity $\xi (W_k) \geq 4$.\\
c)For any $k$ with $i < k < j$, and 
$(g_D, v) \in \tau_k$ for $D \subset W_i$ for some $i$, no curve in $v$ has a geodesic realization in $N$ of length less than $l$. \\
Then there exists an $L_2$-bilipschitz embedding  $G: W \times [-1,1] \rightarrow M$, such that \\
1) $W$ admits a hyperbolic metric given by $W = Q_1 \cup \cdots \cup Q_m$ where each $Q_i$ is a flat pair of pants. \\
2) $ W \times [-1,1]$ is given the product metric. \\
3) $f_i(P_i \setminus P_i \cap P_j) \subset W \times \{ -1 \}$ and $f_j(P_j \setminus P_i \cap P_i) \subset W \times \{ 1 \}$.
\label{thick-lem} \end{lemma}

\subsection{Split surfaces and weak split geometry}

\begin{defn} {\rm An $L$-bi-Lipschitz {\bf split  surface} in $M(l)$ associated to a pants decomposition $\{ Q_1, \cdots , Q_n \}$ of $S$
and a collection $\{ A_1, \cdots , A_m \}$ of complementary annuli (not necessarily all of them) in $S$ 
is an embedding $f : \cup_i Q_i \bigcup  \cup_i A_i \rightarrow M(l)$ such that\\
1) the restriction  $f: \cup_i (Q_i, \partial Q_i) \rightarrow (M(0), \partial M(0))$ is a split level surface \\
2) the restriction $f: A_i \rightarrow M(l)$ is an $L$-bi-Lipschitz embedding.\\
3)  $f$ extends to an embedding (also denoted $f$) of $S$ into $M$ such that the interior of each annulus component of
$f(S \setminus (\cup_i Q_i \bigcup  \cup_i A_i))$  lies entirely in $F(\bigcup_{T \in \TT_l} Int(T))$.}\end{defn}

A split level surface differs from a split surface in that the latter may contain
bi-Lipschitz annuli in addition to flat pairs of pants. We denote split surfaces by  $\Sigma_{i}$.
Let
$\Sigma_{i}^{s}$ denote the union of the collection of flat pairs of pants
and bi-Lipschitz annuli in the image of the split surface (embedding)  $\Sigma_{i}$.

 The next Theorem is one of the technical tools from
\cite{mahan-split}. 

\begin{theorem} \cite[Theorem 4.8]{mahan-split}
Let $N, M, M(0), S, F$ be as in Theorem \ref{bilipmodel} and $E$ an end of $M$. For any $l$ less than the Margulis constant,
let $M(l) = \{ F(x) : {\rm injrad_x} (N) \geq l \}$. Fix a hyperbolic metric on $S$ such that each component of $\partial S$ is 
totally geodesic of length one (this is a normalization condition).
 There exist $ L_1 \geq 1$, $  \epsilon_1 > 0$, $n \in \natls$, 
 and a sequence $\Sigma_i$ of $L_1$-bilipschitz, $  \epsilon_1$-separated split  surfaces exiting the end $E$ of $M$
such that for all $i$, one of the following occurs: 
\begin{enumerate}
\item An $l$-thin curve splits the pair $(\Sigma_i ,\Sigma_{i+1})$, i.e.\ the associated split level surfaces form
an $l$-thin pair. 
\item there exists an $L_1$-bilipschitz embedding  $$G_i: (S\times [0,1], (\partial S)\times [0,1]) \rightarrow (M, \partial M)$$
such that $\Sigma_i^s = G_i (S\times \{ 0\})$ and $\Sigma_{i+1}^s = G_i (S\times \{ 1\})$

\end{enumerate}
Finally, each $l$-thin curve in $S$ splits at most
$n$ split level surfaces in the  sequence $\{ \Sigma_{i} \}$. \label{wsplit}
\end{theorem}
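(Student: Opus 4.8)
The plan is to manufacture the sequence $\{\Sigma_i\}$ by \emph{promoting} the split level surfaces supplied by the bi‑Lipschitz model theorem to genuine split surfaces, and then by thinning that sequence so that every gap between two consecutive surfaces is either cut by an $l$‑thin curve or ``as thick as possible''. \textbf{Set‑up.} Fix $l$ below the Margulis constant and apply Theorem \ref{bilipmodel} to get the $L$‑bilipschitz model $F\colon N\to M$, the tube collection $\TT$, and the exiting sequence $S_i$ of split level surfaces associated to pants decompositions $P_i$ forming a hierarchy path in $\PP(S)$; consecutive $S_i,S_{i+1}$ are separated by a complexity‑$4$ Minsky block $W_i$ with $4$‑edge $e_i$, where $e_i^-\in P_i$, $e_i^+\in P_{i+1}$, and $P_i\cap P_{i+1}$ carries the flat pairs of pants common to $S_i$ and $S_{i+1}$. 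Split $\TT$ into $l$‑thin tubes $\TT_l$ and $l$‑thick tubes. The elementary but crucial observation is that every $l$‑thick tube $T$ has uniformly bounded geometry: its core length lies in a compact subinterval of $(0,\text{Margulis})$ and its radius is bounded in terms of $l$, so $F(T)$ and its boundary torus (with a unit horizontal circle) are uniformly bilipschitz to a fixed model; in particular, by Theorem \ref{meridiancoeff} the number of Minsky blocks abutting any $l$‑thick tube is uniformly bounded.

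\textbf{Level surfaces to split surfaces.} For each $i$ I would modify $S_i$ as follows: for every boundary curve $v$ of $P_i$ whose geodesic realization lies in an $l$‑thick tube $T_v$, replace the thin annulus of $S_i$ running through $F(T_v)$ by an honest $L_1$‑bilipschitz annulus properly embedded in $F(T_v)\subset M(l)$, leaving the annuli inside $\TT_l$ untouched. By the bounded‑geometry observation this is achievable with a constant $L_1$ independent of $i$, and it does not disturb the flat‑pants part. The outcome is, for each $i$, a candidate split surface in $M(l)$ whose ``solid'' part is $S_i^s$ together with the newly inserted thick bilipschitz annuli. The required $\epsilon_1$‑separation will then be obtained by passing to a sufficiently sparse subsequence, using the product collars $Q\times[-\epsilon,\epsilon]$ of Theorem \ref{bilipmodel}(5) to guarantee a definite gap between the solid parts of surfaces that are far apart in the index.

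\textbf{Thinning and the dichotomy.} Next I would thin the sequence. Call an index interval $[a,b]$ an $l$‑thin run if some $l$‑thin curve $v$ is a boundary curve of $P_j$ for all $a\le j\le b$, maximally so. Collapse each $l$‑thin run to its two ends (keep the surfaces at $a$ and $b$, discard the interior), and relabel the surviving surfaces as $\Sigma_i$. Between two consecutive survivors one of two things happens. Either they are the two ends of a collapsed $l$‑thin run, so the $l$‑thin curve $v$ is a common boundary curve and they form an $l$‑thin pair — alternative (1). Or the entire stretch of Minsky blocks between them, together with the $l$‑thick tubes glued along their vertical boundaries, involves only $l$‑thick curves; then Lemma \ref{thick-lem} applies — its hypothesis (c) is exactly the absence of short intervening curves, which holds by construction — and yields an $L_1$‑bilipschitz embedding of $S\times[0,1]$ matching the solid parts on the two sides, i.e. alternative (2). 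Finally, for the ``at most $n$'' clause: an $l$‑thin curve $v$ is a boundary curve of only boundedly many $S_i$ before collapsing, because by Lemma \ref{abut} (via Theorem \ref{bilipmodel}(8)) every hierarchy geodesic supported in a component domain abutting $v$ has length at most $n$, which caps the number of split level surfaces over which $v$ is active; after the collapse $v$ splits only the single pair $(\Sigma_a,\Sigma_b)$, hence at most $n$ of the $\Sigma_i$ (enlarging $n$ if needed).

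\textbf{Main obstacle.} The delicate point is alternative (2): showing that an \emph{a priori} long run of Minsky blocks all of whose associated curves are $l$‑thick, once the intervening $l$‑thick Margulis tubes are attached along their vertical boundaries, glues to a product $S\times[0,1]$ with a \emph{uniform} bilipschitz constant. This needs the fact — ultimately Minsky's model estimate together with Theorem \ref{meridiancoeff}, and packaged as Lemma \ref{thick-lem} — that bilipschitz distortion does not accumulate along blocks in the absence of short curves, combined with choosing the cut locus so that every remaining gap is a \emph{maximal} $l$‑thick stretch (otherwise hypothesis (c) of Lemma \ref{thick-lem} could fail), and with checking that the normalized geometry of the thick tubes (unit horizontal circles) matches the vertical boundaries of the adjoining blocks. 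Verifying these compatibilities is precisely the technical work of Section 4 of \cite{mahan-split}, which I would cite for the details.
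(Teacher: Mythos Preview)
The paper does not actually prove this theorem: it is quoted verbatim as \cite[Theorem 4.8]{mahan-split} in the Appendix, with no proof given, and is used only as a black box to set up the weak split geometry model. So there is nothing in the paper to compare your argument against; the proof lives entirely in \cite{mahan-split}.

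That said, your sketch is broadly along the right lines and you correctly identify the two auxiliary Lemmas \ref{abut} and \ref{thick-lem} as the key inputs. One point worth tightening: your thinning procedure (``collapse each $l$-thin run to its two ends'') does not by itself guarantee that the non-thin gaps are short enough for Lemma \ref{thick-lem} to apply, since that lemma has the hypothesis $(j-i)\le n$. You need the additional observation (which you gesture at via Theorem \ref{meridiancoeff}) that a maximal stretch of consecutive $P_j$'s with \emph{no} $l$-thin boundary curve has uniformly bounded length --- this is what forces the hypothesis (a) of Lemma \ref{thick-lem}. Also, when distinct $l$-thin runs overlap (several thin curves active on overlapping index intervals), your ``keep the two ends'' rule needs a bit more care to produce a well-defined subsequence. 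These are exactly the bookkeeping issues handled in Section 4 of \cite{mahan-split}, as you note.
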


Pairs of split surfaces satisfying Alternative (1) of Theorem \ref{wsplit} will be called an 
{\bf $l$-thin pair} of split surfaces (or simply a thin
pair if $l$ is understood). Similarly, pairs of split surfaces satisfying Alternative (2) of Theorem \ref{wsplit} will be called an {\bf $l$-thick pair}
(or simply a thick
pair) of split surfaces.

\begin{defn} \label{wsplitrmk} 
A model manifold satisfying the following conditions is said
to have {\bf weak split geometry}:\\
\begin{enumerate}
\item A sequence of split surfaces $S^s_i$ exiting the
end(s) of $M$, where $M$ is marked with a homeomorphism to $S \times J$ ($J$ is $\mathbb{R}$ or $[0, \infty )$ according as $M$ is totally or simply degenerate). 
$S_i^s \subset S \times \{ i \}$. \\
\item A collection of Margulis tubes $\mathcal{T}$ in $N$ with image  $F(\mathcal{T})$ in $M$
(under the bilipschitz homeomorphism between $N$ and $M$). We refer to the elements of 
$F(\mathcal{T})$ also as Margulis tubes.\\
\item For each  complementary annulus of $S^s_i$ with core $\sigma$,
  there is a Margulis tube $T\in \mathcal{T}$ whose core is freely homotopic to $\sigma$
   such that $F(T)$ intersects $S^s_i$ at the boundary. (What this roughly
  means is that there is an $F(T)$ that contains the complementary
  annulus.) We say that $F(T)$ splits $S^s_i$.\\
\item There exist constants $\epsilon_0 >0, K_0 >1$ such that
for all $i$, either there exists a Margulis tube splitting both $S^s_i$
  and $S^s_{i+1}$, or else $S_i (=S^s_i)$ and $S_{i+1}  (=S^s_{i+1})$ have injectivity radius bounded
below by $\epsilon_0$ and bound a {\bf thick block} $B_i$, where a thick block is defined to
be a $K_0-$bilipschitz homeomorphic image of $S \times I$. \\
\item $F(T) \cap S^s_i$ is either empty or consists of a pair of
  boundary components of $S^s_i$ that are parallel in $S_i$. \\
\item There is a uniform upper bound $n = n(M)$ on the number of surfaces that
 $F(T)$ splits. 
\end{enumerate}
\end{defn}

\begin{theorem}\cite{mahan-split}
Any  degenerate end of a hyperbolic 3-manifold
 is bi-Lipschitz homeomorphic to a Minsky model
and hence to a model of weak split geometry. \label{minsky-split}
\end{theorem}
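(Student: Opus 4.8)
The plan is to prove the statement in two stages matching its two clauses, both of which amount to assembling the machinery recalled in this appendix.

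\emph{Stage 1: reduction to a Minsky model.} First I would invoke the Bi-Lipschitz Model Theorem, Theorem \ref{bilipmodel}. That theorem is phrased for a simply or doubly degenerate $N$; for a general $M = \bH^3/G$ with $G$ finitely generated and parabolic-free and $E$ a degenerate end of $M$, I would first pass to the cover $M_E \to M$ corresponding to $\pi_1(E) \subset G$. By the Tameness Theorem together with the Covering Theorem \cite{canary-cover}, a neighbourhood of $E$ lifts homeomorphically into an end of $M_E$, which is simply or doubly degenerate, so Theorem \ref{bilipmodel} applies to $M_E$. Restricting the resulting $L$-bilipschitz homeomorphism $F$, together with the tube collection $\TT$, to (the lift of) $E$ yields an $L$-bilipschitz identification of $E$ with the corresponding end of a Minsky model $M$, which gives the first clause.

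\emph{Stage 2: a Minsky model has weak split geometry.} It then remains to verify the six conditions of Definition \ref{wsplitrmk} for the model produced in Stage 1. The key input is Theorem \ref{wsplit}, which provides constants $L_1, \epsilon_1, n$ and a sequence $\Sigma_i$ of $L_1$-bilipschitz, $\epsilon_1$-separated split surfaces exiting $E$, associated to pants decompositions $P_i$ and complementary annuli, such that each consecutive pair $(\Sigma_i, \Sigma_{i+1})$ is either an $l$-thin pair or an $l$-thick pair, with each $l$-thin curve splitting at most $n$ of the $\Sigma_i$. I would match these with Definition \ref{wsplitrmk} as follows. Condition (1) is the exiting sequence $\Sigma_i^s$, after reparametrising the marking so that $\Sigma_i^s \subset S \times \{i\}$ (possible since the $\Sigma_i$ are $\leq_E$-ordered and exhaust $E$). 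Condition (2) is the tube collection $\TT$ from Stage 1. Conditions (3) and (5) are built into the definition of a split (level) surface: each complementary annulus of $\Sigma_i^s$ has core isotopic to a curve $\sigma$ carrying a Margulis tube $T_\sigma \in \TT$, and the annulus lies in $F(T_\sigma)$, which meets $\Sigma_i^s$ precisely in a pair of parallel boundary annuli. Condition (6) is the last sentence of Theorem \ref{wsplit}, with $n(M) = n$. Condition (4) is Alternative (2) of Theorem \ref{wsplit}: when no $l$-thin curve splits $(\Sigma_i, \Sigma_{i+1})$, there is an $L_1$-bilipschitz embedding of $S \times [0,1]$ with $\Sigma_i^s$ and $\Sigma_{i+1}^s$ as its two boundary components, which is the required thick block $B_i$ with $K_0 = L_1$; the lower bound $\epsilon_0$ on the injectivity radius of $\Sigma_i$, $\Sigma_{i+1}$ in this case follows from the description of $M(l)$ as the $F$-image of the points of $N$ of injectivity radius at least $l$, transported by the $L_1$-bilipschitz maps. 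This verifies all six conditions and completes the argument.

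\emph{Where the difficulty lies.} This statement is essentially a packaging result; all the substance is in the theorems being quoted, which is why it is really a one-line citation of \cite{mahan-split}. The Bi-Lipschitz Model Theorem rests on Minsky's a priori length bounds and the Ending Lamination Theorem \cite{minsky-elc1, minsky-elc2}. The genuinely delicate ingredient is the construction of the split surfaces in Theorem \ref{wsplit}, and in particular the uniform bound $n$ on the number of split surfaces that a single thin tube can split: this is where the meridian coefficient estimate (Theorem \ref{meridiancoeff}) and the hierarchy length bounds (Lemmas \ref{abut} and \ref{thick-lem}) enter, controlling the interpolation between consecutive split level surfaces --- an interpolation which, in the thin case, must be routed through the Margulis tube associated with the thin curve. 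Taking Theorems \ref{bilipmodel} and \ref{wsplit} as given, the remainder is bookkeeping.
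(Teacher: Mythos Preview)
Your proposal is correct, and in fact goes beyond what the paper does: the paper states Theorem \ref{minsky-split} purely as a citation of \cite{mahan-split} and provides no proof at all in the appendix. Your two-stage sketch --- invoking Theorem \ref{bilipmodel} (after passing to the cover $M_E$ via Tameness and the Covering Theorem to reduce to the simply/doubly degenerate case) and then verifying Definition \ref{wsplitrmk} item by item from Theorem \ref{wsplit} --- is exactly how one would unpack the citation using the material the appendix has assembled, and your identification of the meridian-coefficient and hierarchy-length inputs as the substantive ingredients behind Theorem \ref{wsplit} is accurate.
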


\subsubsection{Split Blocks and Hanging Tubes}\label{ht}

\begin{defn} Let $(\Sigma_i^s, \Sigma_{i+1}^s)$ be a thick pair of split surfaces in $ M$. 
The closure of the bounded component of
$M \setminus (\Sigma_i^s \cup \Sigma_{i+1}^s)$ between  $\Sigma_i^s, \Sigma_{i+1}^s$   will be called a thick block.
\end{defn}

Note that a thick block is uniformly bi-Lipschitz to the product $S \times [0,1]$ and that its boundary components are 
$\Sigma_i^s, \Sigma_{i+1}^s$.

\begin{defn} Let $(\Sigma_i^s, \Sigma_{i+1}^s)$ be an $l$-thin pair of split surfaces in $M$
and $F(\TT_i)$ be the collection of $l$-thin Margulis tubes that split both $\Sigma_i^s, \Sigma_{i+1}^s$. The closure of the union of the
bounded components of
$M \setminus ((\Sigma_i^s \cup \Sigma_{i+1}^s)\bigcup_{F(T)\in F(\TT_i)} F(T))$  between  $\Sigma_i^s, \Sigma_{i+1}^s$  will be called a {\bf split block}.

The closure of any bounded component is called a {\bf split component}. 
\end{defn}

Each split component may contain Margulis tubes, which we shall call
{\bf hanging tubes} (see below) that {\em do not} split both $\Sigma_i^s, \Sigma_{i+1}^s$.

Topologically, a {\bf split block} $B^s \subset B = S \times I$
is a topological product $S^s \times I$ for some {\em connected
  $S^s$}. However,  the upper and
lower boundaries of $B^s$ need only be
be split subsurfaces of $S^s$. This is to allow for Margulis tubes
starting (or ending) within the split block. Such tubes would split
one of the horizontal boundaries but not both. We shall call such
tubes {\bf hanging tubes}. See figure below: \\

\begin{center}

\includegraphics[height=4cm]{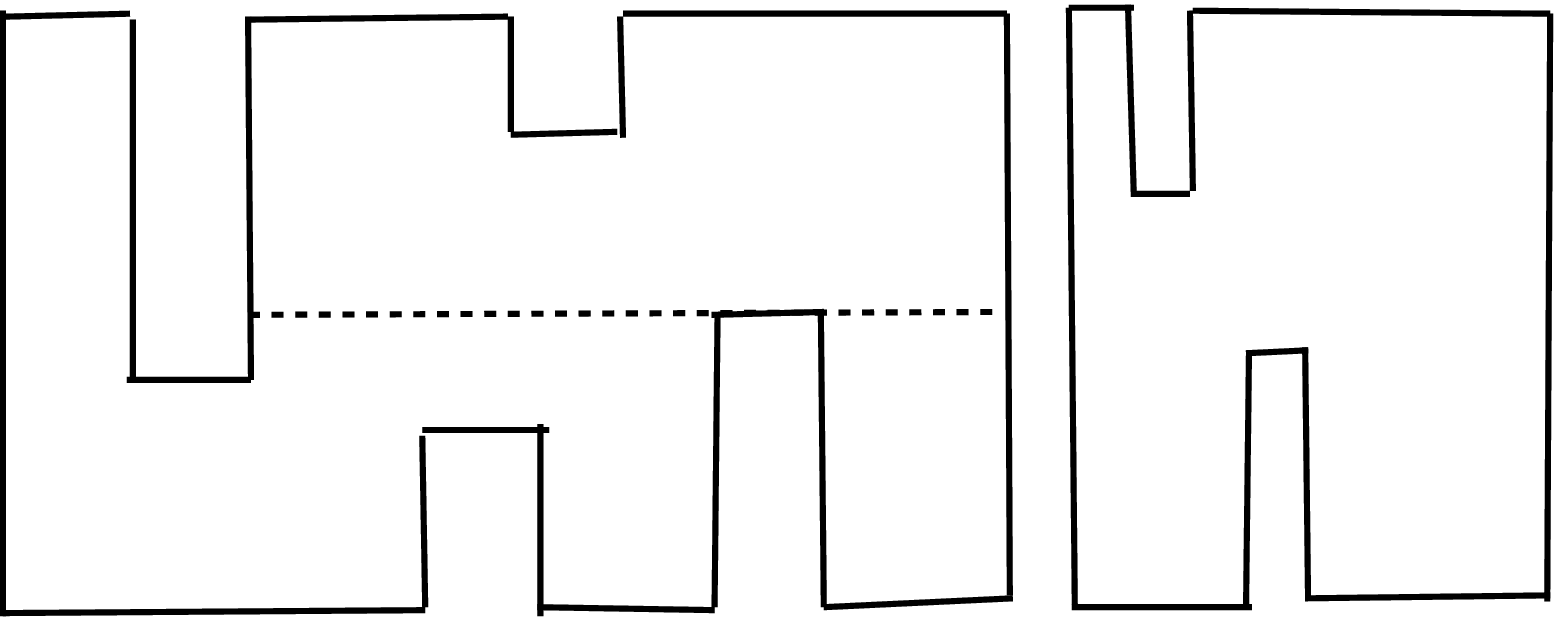}

{\it Split Block with hanging tubes} 

\end{center}

\smallskip
The vertical lengths of {\em hanging tubes} are further required to be
{\em   uniformly bounded
  below by some $\eta_0 > 0$}. Further, each such annulus
has cross section a round circle of length $\epsilon_0$.

\begin{defn}
Hanging tubes intersecting the upper (resp. lower) boundaries of a split block are called 
upper (resp. lower) hanging tubes.
\end{defn}

\section{Erratum}
There are two unfortunate errors in the paper:\\
\begin{enumerate}
\item In the proof of one of the containments in Proposition \ref{multi-almost-minimizing}. The error is in the proof of Claim 3.11.
\item In the appeal to Ledrappier's Theorem 2.12.
\end{enumerate}
We are grateful to James Farre and Yair Minsky for bringing these errors to our notice.

\subsection{Proposition \ref{multi-almost-minimizing}}
The corrected version of  Proposition \ref{multi-almost-minimizing} should state:

\begin{prop}\label{multi-almost-minimizing-corr}
$z \in \Lambda_m$ if $[0,z)$ is almost minimizing.
\end{prop}

The only if direction has a gap in the proof. We suspect that it is not true as stated. The error propagates to Corollary \ref{horo-mult} and 3.13. The rest of the paper is unaffected.

\subsection{Theorems 2.12 and  \ref{am-injrad1}}
Theorem 2.12  which essentially
quotes \cite[Proposition 3]{ledrappier} is  wrong. A corrigendum
to \cite{ledrappier} was brought out by the author
in \cite{ledrappier-corr}. 
Counter examples to Theorem 2.12  have been produced by A. Bellis  \cite{bellis} in the context of surfaces of infinite type.
Theorem \ref{am-injrad1} however only claims the statement for 3-manifolds of finite type. We do not know whether this is true or not. But the  proof
from \cite{ledrappier} that we reference is not complete. Hence the use of  Theorem \ref{am-injrad1} in Theorem \ref{omni} to motivate the subsequent discussion is false. In particular, the motivational content of Theorem \ref{prel} is not valid.
However, since Theorem 2.12 was used mainly to motivate the discussion in the subsequent sections, the content of the subsequent sections remains unaffected.\\

\end{document}